\definecolor{lightblue}{rgb}{0.20,0.40,0.70}
\renewcommand{\cite}{\citet}
\theoremstyle{plain}
\newtheorem{theorem}{Theorem}[section]                                          
\newtheorem{proposition}[theorem]{Proposition}                          
\newtheorem{lemma}[theorem]{Lemma}
\theoremstyle{definition}
\newtheorem{sett}[theorem]{Setting}
\theoremstyle{remark}
\newtheorem{remark}[theorem]{Remark}
\makeatletter \@addtoreset{equation}{section} \makeatother
\newcommand{\Z}{\mathbb{Z}}
\newcommand{\R}{\mathbb{R}}
\newcommand{\W}{{W}}
\newcommand{\N}{\mathbb{N}}
\newcommand{\E}{\mathbb{E}}
\newcommand{\und}{\underline}
\newcommand{\Id}{\mathrm{Id}}
\newcommand{\F}[0]{\ensuremath{\mathcal{F}}}
\newcommand{\one}[0]{\ensuremath{\mathbbm{1}}}
\newcommand{\smallsum}{\textstyle\sum}
\renewcommand{\P}{\mathbb{P}}
\newcommand{\lf}{\lfloor}
\newcommand{\rf}{\rfloor}
\newcommand{\Y}{\mathcal{X}}
\newcommand{\vt}{c_2}
\newcommand{\kt}{\epsilon}
\title
{Strong convergence for explicit 
%\\
space-time discrete 
numerical approximation for 
%\\
% two-dimensional
2D stochastic Navier-Stokes equations}
\author{Sara Mazzonetto 
\smallskip
\\
\small{Universit{\"a}t Duisburg-Essen, Fakult{\"a}t f{\"ur} Mathematik,}\\
\small{Thea-Leymann-Str.~9, 45127 Essen, Germany.}\\
%\smallskip
%e-mail: sara.mazzonetto@uni-due.de.\\
%\smallskip
%\urladdr{\url{https://sites.google.com/site/saramazzonettoweb/}}
}
\date{}
\begin{document}

\maketitle

\begin{abstract}
  In this paper we show the strong convergence of a 
  %implementable 
  fully explicit space-time discrete approximation scheme for the solution process of the two-dimensional incompressible stochastic Navier-Stokes equations on the torus
  %(2D-sNS) 
  driven by additive %space-time 
  noise.
  To do so we apply an existing result 
  %the main result in \cite{jentzen2017strong}, 
  which was designed to prove strong convergence for the same approximation method for other stochastic partial differential equations with
  non-globally monotone non-linearities.
  %, such as the stochastic Burgers equation driven by space-time white noise.
\end{abstract}

% \tableofcontents

\section{Introduction}

%%%%%%%%%%%%
In the last years some explicit and easily implementable versions of the explicit Euler method
have been proved to converge strongly (i.e.~in mean square) 
to the solutions of some infinite-dimensional stochastic evolution equations with superlinearly-growing non-linearities
either driven by trace class noise (e.g., \cite{gyongy2016convergence} and \cite{jentzen2015strong}) or by space-time white noise (e.g., \cite{becker2018strong} and \cite{hutzenthaler2016strong}).

The reasons to introduce versions of the Euler method rely on the fact that it was proved in, e.g., \cite[Theorem~2.1]{hutzenthaler2010strong} that in general the explicit and the linear-implicit Euler schemes do not converge strongly to the solutions of stochastic evolution equations with superlinarly-growing non-linearities. 
The difficulties for strong convergent drift-implicit Euler methods, instead, are related to the implementation: at each step a non-linear equation has to be solved approximately and consequently the computational cost increases with the dimension (see, e.g., \cite{hutzenthaler2012strong} for more details).

For the two-dimensional stochastic Navier-Stokes equations,
driven by additive or multiplicative noise, 
several existence and uniqueness results and several (strongly) convergent approximation schemes are available. 
The state of the art has been summarized very well in \cite{hausenblas2018time}. We refer the reader to this paper and also to \cite{bessaih2018strong},
where the authors establish rates of strong convergence for two approximation methods in the case of diffusion coefficients with linear growth: 
the fully implicit and also the semi implicit Euler schemes introduced in \cite{carelli2012rates} (also in the case of additive noise)
and the splitting scheme of \cite{bessaih2014splitting}.
Previously, except for \cite{dorsek2012semigroup}, who considered additive noise, there had been no result for the strong convergence rates of approximation schemes for the two-dimensional stochastic Navier-Stokes equations, only rates of convergence in probability were available.  

Let us now consider the full-discrete (both in space and in time) non-linearity-truncated accelerated exponential Euler-type scheme introduced in \cite{hutzenthaler2016strong}
which is the first strongly convergent approximation method for the solutions of stochastic Kuramoto-Sivashinsky equations driven by (a spatial distributional derivative of) space-time white noise.
Using a modified version of the scheme the strong convergence for stochastic Burgers equations and Allen-Cahn equations both driven by space-time white noise was proved in
\cite{jentzen2017strong}.
Moreover in \cite{becker2017strong} the spatial and temporal rates of convergence were established for space-time white noise driven Allen-Cahn equations.

%The reasons why this scheme is so successful are non-discretization of the semigroup which make the scheme inherit some apriori estimates of the solution, but it allows for discretization in the stochastic integral, the scheme respects the generalized coercivity type condition, the use of the coercivity/lyapuniv-type coefficients ensures exponential integrability properties which are used in in some Gronwall-type argument, the truncation prevent strong divergence, it is fully explicit and therefore implementable.

In this document we show that the above mentioned numerical approximation provides an implementable scheme also for the solution of two-dimensional stochastic Navier-Stokes equations driven by some trace class noise:
% We consider the solution of the stochastic partial differential equation  
\[ \begin{cases}
d X_t (x) = \left(\Delta X_t (x) - P ( \nabla X_t \cdot X_t)(x)\right) d t + B \, dW_t(x), & x\in (0,1)^2, t\in [0,T],
\\
X_0=\xi \in H,
\end{cases}
\]
with periodic boundary conditions and incompressibility condition $\operatorname{div} X_t=0$,
and where
$H$ is an appropriate (Hilbert) subspace of $L^{2}(\lambda_{(0,1)^2};\R^2)$ (with basis consisting of divergence free functions) in which $X_t$ for all $t\in [0,T]$ lives, 
$P$ is the projection on $H$, 
$W$ is an $\Id_H$-cylindrical Wiener process, 
and $B=(-\Delta)^{-\nicefrac12-\varepsilon}$, $\varepsilon \in (0,\infty)$, is a Hilbert-Schmidt operator.
For simplicity we have taken the viscosity coefficient $\nu$, that is one of the parameters for Navier-Stokes equations, equal to $1$.
Moreover for simplicity, we have taken the coefficient of the nonlinearity $c_1=1$ in Setting~\ref{sett:F} and $\kappa=\vt=0$ in Settings~\ref{sett:operator} and \ref{sett:F}, otherwise the drift would involve a linear term $c X$, for $c\in \R$.

Let the interpolation spaces $H_{r}$, $r\in \R$, associated to $(-\Delta)$. 
In particular for $r\in [0,\infty)$ it holds that $H_r$ is the domain of the fractional power 
$(-\Delta)^{r}$ of the operator $(-\Delta)$.
Let $\varepsilon\in (0,\infty)$, $\varrho \in (\nicefrac12, \nicefrac12+ \varepsilon)$, $\gamma \in (\varrho,\infty)$, and $\xi \in H_{\gamma}$.
Then we can consider the mild solution $X \colon [0,T] \times \Omega \to H_\varrho$
satisfying for all $t\in [0,T]$ that $\P$-a.s.
\begin{equation}\label{eq1}
	X_t = e^{t \Delta} \xi 
	+ \int_0^t e^{(t-s) \Delta} P( - \nabla X_s \cdot X_s ) \, d s
	+ \int_0^t  e^{(t-s) \Delta} (-\Delta)^{\nicefrac12+\varepsilon} \, d W_s.
\end{equation}
Note that any strong or weak solution is also a mild solution, the pathwise uniqueness of the the mild solution follows from a Gronwall-type argument and the fact, demonstrated in Lemma~\ref{lem:locLip}, that the nonlinearity is Lipschitz on bounded sets.

We will prove in Item~\eqref{item:SC:SC} in Theorem~\ref{th:SC} that the following adaptation of the approximation scheme of \cite{hutzenthaler2016strong} converges strongly to \eqref{eq1}.
Let $\mathcal{O}^n, \Y^n \colon [0,T]\times \Omega \to P_n(H)$ be the stochastic processes satisfying for all $n\in \N$, $t\in [0,T]$ that it holds $\P$-a.s.~that
\begin{equation*}
\begin{split}
& \mathcal{O}^{n}_{t} = \int_0^t P_n e^{(t-s) \Delta} (-\Delta)^{\nicefrac12+\varepsilon} \, d W_s + P_n e^{t \Delta} \xi\\
& \Y^n_t = \mathcal{O}^{n}_{t}
\\
&\quad + \int_0^t P_n e^{(t-s) \Delta} \, \one_{\left\{\|(-\Delta)^{\varrho} \Y^n_{\lf s \rf_{h_n}}\|_H + \|(-\Delta)^{\varrho} \mathcal{O}^n_{\lf s \rf_{h_n}}\|_H \leq h_n^{-\chi}\right\}} P ( - \nabla \Y^n_{\lf s \rf_{h_{n}}} \cdot \Y^n_{\lf s \rf_{h_{n}}}) \, ds,
\end{split}
\end{equation*}
where $\chi\in (0,\infty)$ an appropriate constant, 
$(h_m)_{m\in \N}$ is a positive sequence converging to 0, 
and $P_n$ are projections on increasing finite dimensional spaces $P_n(H)\subseteq H$ to be specified later in Setting~\ref{sett:space}.
%%%%%%%%%%%%

The proof of the strong convergence 
\[\limsup_{n\to \infty} \sup_{s\in [0,T]} \|X_s -\Y^n_s\|_H=0
\] (Item~\eqref{item:SC:SC} in Theorem~\ref{th:SC}) 
is an application of Theorem~3.5 in \cite{jentzen2017strong} which improved the results in \cite{hutzenthaler2016strong} 
by considering a suitable generalized coercivity-type condition (in Lemma~\ref{coer:NS} below). 
The coefficients involved in the latter condition are functions that, composed with a suitable transformation (called $\mathbb{O}$) of the Ornstein-Uhlenbeck process 
(called $\mathcal{O}$),
satisfy exponential integrability properties 
(in this document given by Item~\eqref{item:prop:O:4} in Proposition~\ref{prop:prop:O}).

The implementation of the scheme is obtained just by taking for all $n\in \N$ the sequence 
$\Y^n_{(k+1)h_n}$ for $k\in (-1,\frac{T}{h_n}-1)\cap \N$.
This yields a fully explicit space-time discrete approximation scheme.
To the best of the author's knowledge, Theorem~\ref{th:SC} is the first strong convergence result for 
fully explicit space-time discrete approximation processes for  two-dimensional stochastic Navier-Stokes equations.

\subsection{Outline of the paper}
The main result is in Section~\ref{sec:SC}.
In the others sections the assumptions of the theorem are checked.

In Section~\ref{sec:prelim} we give the formal definition of the operators and the spaces involved, moreover some elementary results are proved. 
For example properties of the eigenvalues and eigenfunctions of the Laplace operator and properties of some interpolation spaces. 
Several of the estimates involved can also be found in \cite[Section~4]{jentzen2016exponential} where exponential integrability properties for an approximation scheme are provided in the setting of some two-dimensional stochastic Navier-Stokes equations with multiplicative trace class noise.

Section~\ref{sec:F} is dedicated to the nonlinear part of the drift (i.e. $- \nabla X \cdot X$), namely its formal definition, the generalized coercivity-type condition, and the local Lipschitzianity on bounded sets.

In Section~\ref{sec:noise} the random perturbation is introduced and the properties of the stochastic convolution process and its approximating sequence are studied. 
We will obtain in Lemma~\ref{lemma:conv:rate} that the strong convergence rate for the approximation of the noise is strictly smaller than $2 (\nicefrac12 +\varepsilon-\varrho)$.
Lemma~\ref{lem:coerc_finite} is auxiliary for Proposition~\ref{prop:prop:O} where the exponential integrability properties are given. Lemma~\ref{lem:exist:O} establishes the existence of a continuous version for the stochastic convolution processes.
The arguments in the proofs in this section are similar to those contained in the papers proving the convergence for other equations. 
Indeed they
% In particular Lemma~\ref{lemma:conv:rate}, Lemma~\ref{lem:coerc_finite}, Lemma~\ref{lem:exist:O}, and Proposition~\ref{prop:prop:O} 
are adaptations or follow the arguments of 
% respectively 
\cite[Lemma~5.5, Lemma~5.2, Proposition 5.6, Proposition~5.4]{jentzen2017strong} (for stochastic Burgers and Allen-Cahn equations)
and therefore of \cite[Lemma~5.9, Lemma~5.6, Corollary 5.10, Corollary~5.8]{hutzenthaler2016strong} (for stochastic Kuramoto-Sivashinsky equations).

\subsection{Notation}
Throughout this article the following notation is used. \\
Let $ \mathbb{N} = \{1, 2, 3, \ldots \}$ be the set of all natural numbers. \\
We denote by $ \lf \cdot \rf_h \colon \R \to \R$, $ h \in (0, \infty)$, the \emph{round-ground} functions which satisfy for all $t \in \R$, $h \in (0, \infty)$ that 
\begin{equation*}
\lf t \rf_h = \max( (-\infty, t] \cap \{0, h, -h, 2h, -2h, \ldots\} ).
\end{equation*}
Moreover for two sets $A$ and $B$ satisfying $A\subseteq B$
we denote by $\Id_A \colon A \to A$ the identity function on $A$, i.e.~the function which satisfies for all $ a \in A$ that $\Id_A(a)=a$,
and by $\mathbbm{1}_A^B \colon B \to \{0,1\}$ the indicator function which satisfies for all $ a \in A$ that $\mathbbm{1}_A^B(a)=1$ and for all $b\in B\setminus A$ that $\mathbbm{1}_A^B(b)=0$.\\
%%%
For two measurable spaces $( A, \mathcal{A})$ and $( B, \mathcal{B})$  we denote by 
$ \mathcal{M}(\mathcal{A}, \mathcal{B})$ the set of all $\mathcal{A} / \mathcal{B}$-measurable functions. 
For a topological space $ (X, \tau) $ 
we denote by $ \mathcal{B}(X) $ the Borel sigma-algebra of 
$ (X, \tau) $.
%%%
For a set $A \in \mathcal{B}(\R)$ we denote by $\lambda_A \colon \mathcal{B}(A) \to [0, \infty]$ the Lebesgue-Borel measure on $A$. \\
%%%
For a measure space $(\Omega, \F, \mu)$, a measurable space $(S, \mathcal{S})$, a set $R$, and a function $f \colon \Omega \to R$ we denote by $\left[ f \right]_{\mu, \mathcal{S}} $ the set given by 
\begin{equation*}
\begin{split}
& \left[f \right]_{\mu, \mathcal{S}} \\
& = \left\{ g \in \mathcal{M}(\F, \mathcal{S}) \colon \left( \exists \, A \in \F \colon \mu(A)=0 \,\, \text{and} \,\, \{ \omega \in \Omega \colon f(\omega) \neq g(\omega)\} \subseteq A \right) \right\}\!. 
\end{split}
\end{equation*}
For all $d\in \N$ we denote by $|\cdot|_d$ the Euclidean norm of $\R^d$.
For all $\alpha \in (0,\infty)$ and $p\in [1,\infty)$ 
let $W^{\alpha,p}((0,1)^2,\R^2)$ be the Sobolev-Slobodeckij spaces (see,e.g., \cite[Section~2.1.2]{runst1996sobolev}).
Let us recall that in particular for real numbers $p \in [1, \infty)$, $\theta \in (0,1)$ 
and a $\mathcal{B}((0,1)^2) /\mathcal{B}(\R^2)$-measurable function $v \colon (0,1)^2 \to \R^2$ we denote by $\|v\|_{\W^{\theta, p}((0,1)^2, \R^2)}$  the extended real number given by 
\begin{equation*}
	\|v\|_{\W^{\theta, p}((0,1)^2, \R^2)} 
	= \left[ \iint_{(0,1)^2} \! |v(x)|_2^p \, dx + \iint_{(0,1)^2} \iint_{(0,1)^2} \! \tfrac{|v(x)-v(y)|_2^p}{|x-y|_2^{2+ \theta p}} \, dx \, dy\right]^{\frac{1}{p}} \! \! . 
\end{equation*}
Let $\partial \colon W^{1,2}({(0,1)^2},\R^2)\mapsto L^2(\lambda_{(0,1)^2};\R^{2 \times 2})$ be
the function which satisfy for all smooth function with compact support $\phi\in C^{\infty}_{cpt}({(0,1)^2},\R^2)$, $v\in W^{1,2}({(0,1)^2},\R^2)$, $i\in \{1,2\}$, that 
\[
	\left\langle  \partial_i v,[\phi]_{\lambda_{(0,1)^2},\mathcal{B}(\R^2)}\right\rangle_{L^2(\lambda_{(0,1)^2};\R^2)}=- \left\langle  v,[\tfrac{\partial}{\partial x_i} \phi]_{\lambda_{(0,1)^2},\mathcal{B}(\R^2)}\right\rangle_{L^2(\lambda_{(0,1)^2};\R^2)}
\] 
and  $\partial v=(\partial_1 v,\partial_2 v)$.\\
Furthermore let $ (\und{\cdot}) \colon\ \{ [v]_{\lambda_{(0,1)^2}, \mathcal{B}(\R^2)} \in L^0(\lambda_{(0,1)^2}; \R^2) \colon v \in {C}( (0,1)^2, \R^2 ) \} \to {C}((0,1)^2, \R^2)
$  
be the function which satisfies for all $v \in {C}( (0,1)^2, \R^2 ) $  that
\[
\und{[v]_{\lambda_{(0,1)^2}, \mathcal{B}(\R^2)}}=v
.\]

%
%
%
%

%%%%%%%%%%%%
\section{Properties of the state space of the solution} \label{sec:prelim}
%%%%%%%%%%%%

\begin{sett} \label{sett:space}
Let
$( U, \left\langle   \cdot , \cdot \right\rangle _U, \left\| \cdot \right\|_U )$ be the separable Hilbert space
\[\Big(L^2(\lambda_{(0,1)^2}; \R^2), \left\langle  \cdot , \cdot \right\rangle_{L^2(\lambda_{(0,1)^2}; \R^2)}, \left\| \cdot \right\|_{L^2(\lambda_{(0,1)^2}; \R^2)} \Big).\]
For all $k\in \Z$ let $\varphi_k \in C((0,1),\R)$ be the function such that for all $x\in (0,1)$ it holds that
\[
	\varphi_k(x)
	= \one_{\{0\}}^{\Z}(k) + \one_{\N}^{\Z}(k) \sqrt{2}\cos(2 k \pi x) + \one_{\N}^{\Z}(-k) 	\sqrt{2}\sin(- 2 k \pi x),
\]
let the following elements $U$ 
\[
	e_{0,0,0}=\left[\left\{(1,0)\right\}_{(x,y)\in {(0,1)^2}}\right]_{\lambda_{(0,1)^2},\mathcal{B}(\R^2)}, 
	\qquad 
	e_{0,0,1}=\left[\left\{(0,1)\right\}_{(x,y)\in {(0,1)^2}}\right]_{\lambda_{(0,1)^2},\mathcal{B}(\R^2)},
\]
and for all $k,l\in \mathbb{Z}^2\setminus\{(0,0)\}$ the elements
\begin{equation*}
	e_{k,l,0}
	=\left[\left\{\left(\tfrac{l \varphi_k(x)\varphi_l(y)}{\sqrt{k^2+l^2}},\tfrac{k \varphi_{-k}(x)\varphi_{-l}(y)}{\sqrt{k^2+l^2}}\right)\right\}_{(x,y)\in {(0,1)^2}}\right]_{\lambda_{(0,1)^2},\mathcal{B}(\R^2)} \!\!.
\end{equation*}
Moreover 
let $H\subseteq U$ be the closed subvector space of $U$ with orthonormal basis $\mathbb{H}=\{e_{0,0,1}\} \cup \{e_{i,j,0} \colon i,j\in \Z\}$ 
and let,  for all $n\in \N$, 
\[
	\mathbb{H}_n = \{e_{0,0,1}\}\cup \{e_{k,l,0} \colon k,l\in \Z \text{ and } k^2+l^2 < n^2 \} \subseteq \mathbb{H}
\] 
and $P_n \subseteq L(H)$ the projection on the finite dimensional subspace of $H$ spanned by $\mathbb{H}_n$, i.e.~ 
for all $ u \in H $ it holds that
$ 
P_n(u) = \sum_{h\in \mathbb{H}_n} \left\langle  h, u \right\rangle_H \, h.
$
In addition let
$ \kt \in (0,\infty)$ and $\lambda_{e_{0,0,1}}, \lambda_{e_{k,l,0}}\in [0,\infty)$, $k,l\in \Z$, be the following real numbers
$\lambda_{e_{0,0,1}}=\lambda_{e_{0,0,0}}=\kt$,
$\lambda_{e_{k,l,0}}=\kt+ 4 \pi^2(k^2+l^2)$.
\end{sett}

\subsection{Elementary estimates}

%%%%%%%%%%%%%%%%%%%%%%%%%%%%%%

%{\color{red} We use here $\beta$, $\varepsilon$, $\zeta$, $r$ when actually we can use other constants, like $\alpha,\rho, \varrho, \gamma,\delta$}

\begin{lemma}\label{lem:useful}
	Assume Setting~\ref{sett:space}.
	Then it holds
	\begin{enumerate}[(i)]
		\item \label{item:useful:3}
		for all $\varepsilon \in (0,\infty)$ that $ \sum_{h \in \mathbb{H}} \lambda_{h}^{-1-\varepsilon} <\infty$,
		\item \label{item:useful:2}
		for all $\beta \in (0,\infty)$, $\varepsilon\in [0,\beta)$ that 
		$ 
			\sum_{h \in \mathbb{H}} (\kappa+\lambda_h)^{\varepsilon} \lambda_{h}^{-1-\beta} <\infty
		$,
		\item \label{item:useful:1}
		for all $n\in \N$, $\varepsilon\in (0,\infty)$ that $\|(\kappa-A)^{-\varepsilon} (\Id_H-P_n) \|_{L(H)} \leq (\kappa + \kt+  4 \pi^2 n^2)^{-\varepsilon} $,
		\item  \label{item:useful:4}
		that $\liminf_{ n \to \infty } \inf( \{\lambda_h \colon h \in \mathbb{H} \backslash \mathbb{H}_n \} \cup \{\infty\}  ) = \infty$.
	\end{enumerate}
\end{lemma}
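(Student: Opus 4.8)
The plan is to read everything off the explicit spectral data in Setting~\ref{sett:space}. The only elements of $\mathbb{H}$ carrying the eigenvalue $\kt$ are $e_{0,0,1}$ and $e_{0,0,0}$, while every $e_{k,l,0}$ with $(k,l)\in\Z^2\setminus\{(0,0)\}$ carries $\kt+4\pi^2(k^2+l^2)$; in particular $\lambda_h\geq\kt>0$ for all $h\in\mathbb{H}$ and $\inf\{\lambda_h\colon h\in\mathbb{H}\setminus\mathbb{H}_n\}=\kt+4\pi^2n^2$ for all $n\in\N$. Given this, each of the four assertions is essentially a one-line computation.

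For Item~\eqref{item:useful:3} I would split $\sum_{h\in\mathbb{H}}\lambda_h^{-1-\varepsilon}$ into the two constant-mode contributions, each equal to $\kt^{-1-\varepsilon}<\infty$, plus $\sum_{(k,l)\in\Z^2\setminus\{(0,0)\}}(\kt+4\pi^2(k^2+l^2))^{-1-\varepsilon}$, and bound the latter, using $\kt>0$, by $(4\pi^2)^{-1-\varepsilon}\sum_{(k,l)\in\Z^2\setminus\{(0,0)\}}(k^2+l^2)^{-1-\varepsilon}$. It then remains to observe that the lattice sum $\sum_{(k,l)\in\Z^2\setminus\{(0,0)\}}(k^2+l^2)^{-s}$ is finite for every $s\in(1,\infty)$; I would see this by handling the two coordinate axes separately, $\sum_{l\in\Z\setminus\{0\}}|l|^{-2s}=2\zeta(2s)<\infty$ (valid since $2s>1$), and bounding the remaining terms via $k^2+l^2\geq2|kl|$ by $2^{-s}(\sum_{k\in\Z\setminus\{0\}}|k|^{-s})^2=2^{-s}(2\zeta(s))^2<\infty$ (valid since $s>1$), and then apply this with $s=1+\varepsilon$. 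Item~\eqref{item:useful:2} reduces to Item~\eqref{item:useful:3}: since $\lambda_h\geq\kt$ one has $\kappa+\lambda_h\leq(1+|\kappa|\kt^{-1})\lambda_h$ for all $h\in\mathbb{H}$, hence $(\kappa+\lambda_h)^{\varepsilon}\lambda_h^{-1-\beta}\leq(1+|\kappa|\kt^{-1})^{\varepsilon}\lambda_h^{-1-(\beta-\varepsilon)}$, and the exponent $-1-(\beta-\varepsilon)$ is strictly below $-1$ because $\beta-\varepsilon>0$, so convergence follows from Item~\eqref{item:useful:3}.

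For Item~\eqref{item:useful:1} I would use that $(\kappa-A)^{-\varepsilon}(\Id_H-P_n)$ is diagonal in the orthonormal basis $\mathbb{H}$, multiplying $h\in\mathbb{H}$ by $(\kappa+\lambda_h)^{-\varepsilon}\one_{\mathbb{H}\setminus\mathbb{H}_n}^{\mathbb{H}}(h)$, so that its $L(H)$-norm equals $\sup\{(\kappa+\lambda_h)^{-\varepsilon}\colon h\in\mathbb{H}\setminus\mathbb{H}_n\}$; since $z\mapsto z^{-\varepsilon}$ is decreasing on $(0,\infty)$ and $\inf\{\lambda_h\colon h\in\mathbb{H}\setminus\mathbb{H}_n\}=\kt+4\pi^2n^2$, this supremum equals $(\kappa+\kt+4\pi^2n^2)^{-\varepsilon}$, which is the claimed bound. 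Item~\eqref{item:useful:4} follows from the same description: for every $n\in\N$ the set $\mathbb{H}\setminus\mathbb{H}_n$ is nonempty with $\inf\{\lambda_h\colon h\in\mathbb{H}\setminus\mathbb{H}_n\}=\kt+4\pi^2n^2$, whence the displayed $\liminf$ equals $\lim_{n\to\infty}(\kt+4\pi^2n^2)=\infty$. The only step with real content is the convergence of the two-dimensional zeta-type lattice sum appearing in Item~\eqref{item:useful:3}; everything else is routine bookkeeping with diagonal operators and monotone scalar functions.
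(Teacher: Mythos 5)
Your proof is correct, and for Items~\eqref{item:useful:1} and \eqref{item:useful:4} (diagonal operator, attained infimum $\kt+4\pi^2n^2$ on $\mathbb{H}\setminus\mathbb{H}_n$) it coincides with the paper's argument; the differences lie in how you settle the two summability statements. For Item~\eqref{item:useful:3} the paper isolates the two constant modes and the coordinate axes exactly as you do, but controls the genuinely two-dimensional part $\sum_{k,l\in\N\setminus\{1\}}(\kt+4\pi^2(k^2+l^2))^{-(1+\varepsilon)}$ by comparison with the planar integral $2\pi\int_1^\infty x\,(\kt+4\pi^2x^2)^{-(1+\varepsilon)}\,dx$, i.e.\ a radial/integral-test bound that yields an explicit constant $\tfrac{1}{2\varepsilon}(\kt+4\pi^2)^{-\varepsilon}$ and only needs the exponent to exceed $1$ in the radial variable; you instead drop $\kt$ and factorize via $k^2+l^2\geq 2|kl|$, reducing the lattice sum to $2^{-s}\bigl(2\zeta(s)\bigr)^2$ plus the axis terms $4\zeta(2s)$ with $s=1+\varepsilon$. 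Your route is more elementary (no integral comparison), at the cost of a cruder constant; both are perfectly adequate here since only finiteness is used downstream. For Item~\eqref{item:useful:2} the paper splits off the finitely many $h\in\mathbb{H}_m$ with $\lambda_h$ possibly below $\kappa$ and uses $(\kappa+\lambda_h)^\varepsilon\leq 2^\varepsilon\lambda_h^\varepsilon$ on the tail, whereas you avoid any splitting by the uniform bound $\kappa+\lambda_h\leq(1+\kappa\,\kt^{-1})\lambda_h$, valid for all $h$ because $\lambda_h\geq\kt>0$; this is a slightly cleaner reduction to Item~\eqref{item:useful:3} with exponent $1+(\beta-\varepsilon)$, and it covers $\varepsilon=0$ just as the paper's argument does. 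One small remark: in Item~\eqref{item:useful:1} you claim equality of the $L(H)$-norm with $(\kappa+\kt+4\pi^2n^2)^{-\varepsilon}$, which is indeed true since the infimum of $\lambda_h$ over $\mathbb{H}\setminus\mathbb{H}_n$ is attained at $e_{n,0,0}$; the lemma only asserts the inequality, so your statement is (harmlessly) stronger than what is needed.
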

\begin{proof}[Proof of Lemma \ref{lem:useful}.]
	Throughout the proof of the first item, let $\varepsilon \in (0,\infty)$ be a fixed real number. Then note that
	\begin{equation} \label{eq:usefull:3:k}
	\begin{split}
	& \tfrac12 \smallsum_{k\in \mathbb{Z}\setminus\{0\}} \lambda_{e_{k,0,0}}^{-1-\varepsilon}  = \smallsum_{k\in \N} \lambda_{e_{k,0,0}}^{-1-\varepsilon}  
	= \smallsum_{k\in \N} \lambda_{e_{0,k,0}}^{-1-\varepsilon} 
	= \smallsum_{k\in \N} (\kt + 4 \pi^2 k^2)^{-1-\varepsilon} \\
	& \leq (2 \pi)^{-2(1+\varepsilon)} \smallsum_{k\in \N} k^{-2(1+\varepsilon)} 
	= (2 \pi)^{-2(1+\varepsilon)} \left(1+\smallsum_{k\in \N} (k+1)^{-2(1+\varepsilon)} \right)\\
	& \leq (2 \pi)^{-2(1+\varepsilon)}  \left(1+ \int_1^{\infty}  x^{-2(1+\varepsilon)} d x \right) <\infty
	\end{split}
	\end{equation} 
	and 
	\begin{equation*}
	\begin{split}
	& \smallsum_{l,k\in\N\setminus\{1\}} (\kt + 4 \pi^2 (k^2+l^2) )^{-(1+\varepsilon)}
	\leq 2 \pi \int_1^\infty  x \left(\kt + 4 \pi^2 x^2 \right)^{-(1+\varepsilon)}  d x \\
	& =  \int_{\sqrt{ \kt +  4 \pi^2 }}^\infty y^{1-2(1+\varepsilon)} d y = \frac1{2\beta} \left(\kt+  4 \pi^2 \right)^{-\varepsilon} <\infty.
	\end{split}
	\end{equation*}
	This, together with \eqref{eq:usefull:3:k}, implies 
	\begin{equation} \label{eq:usefull:3:kl}
	\begin{split}
	& \smallsum_{k,l \in\Z\setminus\{0\}}  |\lambda_{e_{k,l,0}} |^{-(1+\varepsilon)}  \\
	& =\smallsum_{k,l\in\Z\setminus\{0\}} (\kt + 4 \pi^2 (k^2+l^2) )^{-(1+\varepsilon)} \\
	& = 4 \smallsum_{l,k\in\N} (\kt + 4 \pi^2 (k^2+l^2) )^{-(1+\varepsilon)}  \\
	& = 8 \smallsum_{k\in\N} (\kt + 4 \pi^2 + 4 \pi^2 k^2)^{-(1+\varepsilon)} + 4 \smallsum_{l,k\in\N\setminus\{1\}} (\kt + 4 \pi^2 (k^2+l^2) )^{-(1+\varepsilon)}\\ 
	& \leq 4 \smallsum_{k\in\Z\setminus\{0\}} \lambda_{e_{k,0,0}}^{-(1+\varepsilon)} + 4 \smallsum_{l,k\in\N\setminus\{1\}} (\kt + 4 \pi^2 (k^2+l^2) )^{-(1+\varepsilon)}<\infty.
	\end{split}
	\end{equation}
	Combining \eqref{eq:usefull:3:k} and \eqref{eq:usefull:3:kl} with the fact that
	\begin{equation*}
	\begin{split}
	& \smallsum_{h\in \mathbb{H}} \lambda_h^{-(1+\varepsilon)} 
	= \lambda_{e_{0,0,0}}^{-(1+\varepsilon)} + \lambda_{e_{0,0,1}}^{-(1+\varepsilon)} + \smallsum_{(k,l)\in \mathbb{Z}^2\setminus\{(0,0)\}} \lambda_{e_{k,l,0}}^{-(1+\varepsilon)}\\
	& = 2 \kt^{-(1+\varepsilon)} + \smallsum_{k \in \mathbb{Z}\setminus \{0\}} \lambda_{e_{k,0,0}}^{-(1+\varepsilon)} + \smallsum_{k\in \mathbb{Z}\setminus\{0\}} \lambda_{e_{0,k,0}}^{-(1+\varepsilon)} + \smallsum_{k,l\in \mathbb{Z}\setminus\{0\}} \lambda_{e_{k,l,0}}^{-(1+\varepsilon)}
	\end{split}
	\end{equation*}
	proves Item~\eqref{item:useful:3}.

	In the proof of Item~\eqref{item:useful:2} 
	let $\beta \in (0,\infty)$ and $\varepsilon \in [0,\beta)$ be fixed real numbers.
	Then note that there exists $m\in \N$ such that for all $h\in \mathbb{H}\setminus \mathbb{H}_m$ it holds that $\kappa \leq \lambda_h$. 
	This implies that
	\begin{equation*}
	\begin{split}
	\smallsum_{h \in \mathbb{H}} (\kappa+\lambda_h)^{\varepsilon} \lambda_{h}^{-1-\beta} 
	& = \smallsum_{h \in \mathbb{H}_m}  (\kappa+\lambda_h)^{\varepsilon} \lambda_{h}^{-1-\beta} + \smallsum_{h \in \mathbb{H}\setminus \mathbb{H}_m} (\kappa+\lambda_h)^{\varepsilon} \lambda_{h}^{-1-\beta}\\
	& \leq 
	\smallsum_{h \in \mathbb{H}_m}  (\kappa+\kt + 4 \pi^2 |h|^2 )^{\varepsilon} (\kt)^{-1-\beta} 
	+ 2^{\varepsilon} \smallsum_{h \in \mathbb{H}\setminus \mathbb{H}_m} \lambda_h^{\varepsilon-1-\beta}.
	\end{split}
	\end{equation*}
	This, 
	the fact that $\#_{\mathbb{H}_m}<\infty$, 
	the fact that $\lambda_h > 0$ for all $h\in \mathbb{H}$,
	and Item~\eqref{item:useful:3} (with $\varepsilon= \beta-\varepsilon$) demonstrate Item~\eqref{item:useful:2}.

	Throughout the proof of Item~\eqref{item:useful:1} let the real number $\varepsilon\in (0,\infty)$ and the natural number $n\in \N$ be fixed. 
	Then observe that for all $h\in \mathbb{H}_n$ it holds that $ (\Id_H-P_n) h = 0$ and for all $h\in \mathbb{H}\setminus \mathbb{H}_n$ it holds that $(\Id_H-P_n) h = h$.
	This, together with the fact that $v\in H$ that $v=\sum_{h\in \mathbb{H}} \left\langle  v,h \right\rangle_H h$, shows that it holds for all $v\in H$ that
	\begin{equation*}
	\begin{split}
	& \|(\kappa-A)^{-\varepsilon} (\Id_H-P_n) v\|^2_H = \left\| \smallsum_{h\in \mathbb{H}\setminus \mathbb{H}_n} \left\langle  v,h \right\rangle_H (\kappa-A)^{-\varepsilon} h \right\|_{H}^2\\
	& = \left\| \smallsum_{h\in \mathbb{H}\setminus \mathbb{H}_n} (\kappa+ \lambda_h)^{-\varepsilon} \left\langle  v,h \right\rangle_H  h \right\|_{H}^2\\
	& = \smallsum_{h\in \mathbb{H}\setminus \mathbb{H}_n} (\kappa+ \lambda_h)^{-2\varepsilon} \left\langle  v,h \right\rangle_H^2.
	\end{split}
	\end{equation*}
	This, together with the fact that for all $h\in \mathbb{H}\setminus \mathbb{H}_n$ it holds that $\lambda_h \geq \kt + 4 \pi^2 n^2$, shows that it holds for all $v\in H$ that 
	\begin{equation*}
	\begin{split}
	& \|(\kappa-A)^{-\varepsilon} (\Id_H-P_n) v\|^2_H 
	\leq \smallsum_{h\in \mathbb{H}\setminus \mathbb{H}_n} (\kappa+ \kt + 4 \pi^2 n^2)^{-2\varepsilon} \left\langle  v,h \right\rangle_H^2\\
	& \leq (\kappa + \kt + 4 \pi^2 n^2)^{-2\varepsilon} \smallsum_{h\in \mathbb{H}}  \left\langle  v,h \right\rangle_H^2
	= (\kappa + \kt + 4 \pi^2 n^2)^{-2\varepsilon} \|v\|^2_H .
	\end{split}
	\end{equation*}
	Therefore, we obtain that 
	\begin{equation*}
	\begin{split}
	& \|(\kappa-A)^{-\varepsilon} (\Id_H-P_n) \|_{L(H)} \\
	& = \sup\left\{ \|(\kappa-A)^{-\varepsilon} (\Id_H-P_n) v\|_H \colon v\in H \text{ with } \|v\|_H=1\right\}\\
	& \leq (\kappa + \kt + 4 \pi^2 n^2)^{-\varepsilon}  .
	\end{split}
	\end{equation*}
	This establishes Item~\eqref{item:useful:1}.

	Finally note that it holds for all $n\in \N$ that $\inf\{\lambda_h \colon h\in \mathbb{H}\setminus \mathbb{H}_n\} = \lambda_{e_{n,0,0}} = \kt + 4 \pi^2 n^2$.
	This proves Item~\eqref{item:useful:4}.
	The proof of Lemma~\ref{lem:useful} is thus completed.
\end{proof}

\begin{lemma}\label{lem:eigenf}
	Assume Setting~\ref{sett:space}.
	Then it holds
	\begin{enumerate}[(i)]
		\item \label{item:eigenf:L4}
		that $\sup_{h \in \mathbb{H}} \|h\|_{L^{\infty}(\lambda_{(0,1)^2}; \R^2)} \leq 2$,
%		\item \label{item:eigenf:2}
%		for all $n_1,n_2 \in \Z$  that 
%		$
%		\partial_{j} (e_{n_1,n_2,0}) = 2 \pi (-1)^j n_j  \, e_{(-1)^j n_1, (-1)^{j+1} n_2,0},
%		$	
		\item \label{item:eigenf:7}
		for all $h=(h_1,h_2) \in \mathbb{H}$ that 
		$\partial_1 h_1 +\partial_2 h_2 =\left[\{0\}_{x\in {(0,1)^2}}\right]_{\lambda_{(0,1)^2},\mathcal{B}(\mathbb{R})}$,
		\item  \label{item:eigenf:6}
		for all $j\in\{1,2\}$, $h,v \in \mathbb{H}$ with $v\neq h$  
		that $\left\langle  \partial_j h, \partial_j v\right\rangle_H = 0$,
		and
		\item \label{item:eigenf:5}
		for all $r\in [\nicefrac12 ,\infty)$ that $\max_{j\in \{1,2\}} \sup_{h\in \mathbb{H}} {\|\partial_j h\|_U}{|\lambda_h|^{-r}} \leq 1$.
	\end{enumerate}
\end{lemma}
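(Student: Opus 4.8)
The plan is to reduce all four items to one--dimensional computations with the scalar functions $\varphi_k$, $k\in\Z$. First I would record three elementary facts: each $\varphi_k$ is smooth on $(0,1)$; the family $\{\varphi_k\}_{k\in\Z}$ is orthonormal in $L^2(\lambda_{(0,1)};\R)$ with $\sup_{k\in\Z}\|\varphi_k\|_{L^\infty(\lambda_{(0,1)};\R)}\le\sqrt2$; and, crucially,
\[
  \varphi_k' = -2k\pi\,\varphi_{-k}\qquad\text{for all }k\in\Z ,
\]
which one checks by distinguishing the cases $k\in\N$, $k=0$, and $-k\in\N$ in the definition of $\varphi_k$ (using $\tfrac{d}{dx}\cos(2k\pi x)=-2k\pi\sin(2k\pi x)$ and $\tfrac{d}{dx}\sin(-2k\pi x)=-2k\pi\cos(2k\pi x)$). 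Substituting this (and its consequence $\tfrac{d}{dx}\varphi_{-k}=2k\pi\varphi_{k}$) into the definition of $e_{k,l,0}$ and comparing coordinates, I obtain for every $(k,l)\in\Z^2\setminus\{(0,0)\}$ the two identities
\[
  \partial_1 e_{k,l,0} = -2k\pi\, e_{-k,l,0} ,\qquad \partial_2 e_{k,l,0} = 2l\pi\, e_{k,-l,0} ,
\]
while $\partial_j e_{0,0,0}=\partial_j e_{0,0,1}=0$ for $j\in\{1,2\}$ because those two basis vectors are constant. Since $(k,l)\neq(0,0)$ implies $(-k,l)\neq(0,0)$ and $(k,-l)\neq(0,0)$, the vectors $e_{-k,l,0}$, $e_{k,-l,0}$ genuinely lie in $\mathbb{H}$; in particular $\partial_j h\in H$ for every $h\in\mathbb{H}$, so the inner products in Item~\eqref{item:eigenf:6} are meaningful. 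These two identities are the engine of the proof; deriving them, with all index flips $k\mapsto-k$, $l\mapsto-l$ and signs in the right place, is the only point requiring genuine care.

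Item~\eqref{item:eigenf:L4} then follows at once: for $(k,l)\neq(0,0)$,
\[
  |e_{k,l,0}(x,y)|_2^2 = \tfrac{l^2\varphi_k(x)^2\varphi_l(y)^2 + k^2\varphi_{-k}(x)^2\varphi_{-l}(y)^2}{k^2+l^2} \le \tfrac{4(k^2+l^2)}{k^2+l^2} = 4
\]
by $\varphi_m(\cdot)^2\le2$, whereas $|e_{0,0,0}(x,y)|_2=|e_{0,0,1}(x,y)|_2=1$; taking the essential supremum over $(x,y)$ and then the supremum over $\mathbb{H}$ gives the bound $2$. For Item~\eqref{item:eigenf:7}, writing $e_{k,l,0}=(h_1,h_2)$, the identities give $\partial_1 h_1=-\tfrac{2kl\pi\,\varphi_{-k}(x)\varphi_l(y)}{\sqrt{k^2+l^2}}$ (the first coordinate of $-2k\pi\,e_{-k,l,0}$) and $\partial_2 h_2=\tfrac{2kl\pi\,\varphi_{-k}(x)\varphi_l(y)}{\sqrt{k^2+l^2}}$ (the second coordinate of $2l\pi\,e_{k,-l,0}$), whose sum is $0$; together with the vanishing derivatives of $e_{0,0,0}$ and $e_{0,0,1}$ this yields $\partial_1 h_1+\partial_2 h_2=0$ for all $h=(h_1,h_2)\in\mathbb{H}$.

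For Item~\eqref{item:eigenf:6}, let $h,v\in\mathbb{H}$ with $h\neq v$ and $j\in\{1,2\}$. If $h$ or $v$ is one of $e_{0,0,0}$, $e_{0,0,1}$, then $\partial_j h=0$ or $\partial_j v=0$ and $\langle\partial_j h,\partial_j v\rangle_H=0$. Otherwise $h=e_{k,l,0}$ and $v=e_{m,n,0}$ with $(k,l)\neq(m,n)$, and the identities give $\langle\partial_1 h,\partial_1 v\rangle_H=4km\pi^2\langle e_{-k,l,0},e_{-m,n,0}\rangle_H$ and $\langle\partial_2 h,\partial_2 v\rangle_H=4ln\pi^2\langle e_{k,-l,0},e_{m,-n,0}\rangle_H$; since $(k,l)\neq(m,n)$ forces $(-k,l)\neq(-m,n)$ and $(k,-l)\neq(m,-n)$, orthonormality of $\mathbb{H}$ (Setting~\ref{sett:space}) makes both inner products vanish. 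For Item~\eqref{item:eigenf:5}, the identities together with $\|e_{-k,l,0}\|_U=\|e_{k,-l,0}\|_U=1$ give $\|\partial_1 e_{k,l,0}\|_U=2\pi|k|$ and $\|\partial_2 e_{k,l,0}\|_U=2\pi|l|$, hence $\|\partial_j e_{k,l,0}\|_U\le2\pi\sqrt{k^2+l^2}\le(\kt+4\pi^2(k^2+l^2))^{1/2}=|\lambda_{e_{k,l,0}}|^{1/2}$; since $(k,l)\neq(0,0)$ gives $|\lambda_{e_{k,l,0}}|\ge4\pi^2>1$, for every $r\in[\nicefrac12,\infty)$ we obtain $\|\partial_j e_{k,l,0}\|_U\,|\lambda_{e_{k,l,0}}|^{-r}\le\|\partial_j e_{k,l,0}\|_U\,|\lambda_{e_{k,l,0}}|^{-1/2}\le1$, while for $e_{0,0,0}$ and $e_{0,0,1}$ the product equals $0$; taking the maximum over $j$ and the supremum over $\mathbb{H}$ finishes the item.

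I do not anticipate a real obstacle here: once the two derivative identities are established, everything reduces to bookkeeping. The points demanding attention are (a) obtaining $\varphi_k'=-2k\pi\varphi_{-k}$ uniformly in $k\in\Z$, with the correct sign emerging from $\tfrac{d}{dx}\sin(-2k\pi x)$ in the case $-k\in\N$, and (b) the sign/index accounting in the derivative identities for $e_{k,l,0}$; it is also worth verifying explicitly that the generic range $(k,l)\in\Z^2\setminus\{(0,0)\}$ already absorbs the degenerate sub-cases $k=0$ or $l=0$ (where a coordinate of $e_{k,l,0}$, or one of its partial derivatives, vanishes identically while the corresponding prefactor $k$ or $l$ is zero as well), so that only the constant basis vectors $e_{0,0,0}$ and $e_{0,0,1}$ need a separate — and immediate — treatment.
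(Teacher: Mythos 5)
Your proposal is correct and takes essentially the same route as the paper's proof: both hinge on the derivative identities $\partial_1 e_{k,l,0}=-2k\pi\,e_{-k,l,0}$ and $\partial_2 e_{k,l,0}=2l\pi\,e_{k,-l,0}$ (the paper's identity \eqref{eq:eigenf:deltah}), from which the $L^{\infty}$ bound, the divergence cancellation, the orthogonality via orthonormality of $\mathbb{H}$, and the estimate $2\pi|n_j|\leq|\lambda_h|^{\nicefrac12}\leq|\lambda_h|^{r}$ follow exactly as you describe.
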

\begin{proof}[Proof of Lemma \ref{lem:eigenf}.]
	First note that for all $h\in \mathbb{H}$ it holds that $\|h\|_{L^{\infty}(\lambda_{(0,1)^2}; \R^2)}= \sup_{x\in (0,1)^2}|\und{h}(x)|_2$. 
	In particular it holds that $\|e_{0,0,0}\|_{L^{\infty}(\lambda_{(0,1)^2}; \R^2)}=\|e_{0,0,1}\|_{L^{\infty}(\lambda_{(0,1)^2}; \R^2)}=1$ and for all $(k,l)\in \Z^2\setminus\{(0,0)\}$ it holds that 
	\begin{equation*}
	\begin{split}
	\|e_{k,l,0}\|_{L^{\infty}(\lambda_{(0,1)^2}; \R^2)}
	&= 
	\sup_{x,y\in (0,1)} \left( \tfrac1{\sqrt{k^2+l^2}} \left| l  \varphi_k(x)\varphi_l(y), k \varphi_{-k}(x)\varphi_{-l}(y) \right|_2\right)\\
	& =
	\sup_{x,y\in (0,1)}  \left(\tfrac1{\sqrt{k^2+l^2}} ( l^2  (\varphi_k(x)\varphi_l(y))^2 + k^2 (\varphi_{-k}(x)\varphi_{-l}(y))^2 )^{\nicefrac12} \right)\\
	& \leq 
	\tfrac1{\sqrt{k^2+l^2}}  \left( 2^2 (l^2 + k^2 )\right)^{\nicefrac12}=2.
	\end{split}
	\end{equation*}
	This establishes Item~\eqref{item:eigenf:L4}.

	Note that for all $j\in \{1,2\}$, $n_1,n_2\in \Z$ it holds that 
	\begin{equation} \label{eq:eigenf:deltah}
	\begin{split}
	\partial_{j} (e_{n_1,n_2,0}) = 2 \pi (-1)^j n_j  \, e_{(-1)^j n_1, (-1)^{j+1} n_2,0}, \quad \partial_j e_{0,0,1} = 0.
	\end{split}
	\end{equation}
%	proves Item~\eqref{item:eigenf:2}.
	%
	%
	%
	This implies that for all $n_1,n_2 \in \Z$ it holds that $e_{n_1,n_2,0} = \left( (e_{n_1,n_2,0})_1, (e_{n_1,n_2,0})_2 \right)$ and
	\begin{equation*}
	\begin{split}
	& \partial_1 (e_{n_1,n_2,0})_1 +\partial_2 (e_{n_1,n_2,0})_2 
	\\	
	&=
	- 2 \pi n_1  (e_{-n_1,n_2,0})_1  + 2 \pi n_2 (e_{n_1,-n_2,0})_1 
	\\
	&=
	\left[\left\{\tfrac{- 2 \pi n_1  n_2 \varphi_{-n_1}(x)\varphi_{n_2}(y)  + 2 \pi n_2 n_1 \varphi_{-n_1}(x)\varphi_{n_2}(y)  }{\sqrt{n_1^2+n_2^2}}\right\}_{(x,y)\in {(0,1)^2}}\right]_{\lambda_{(0,1)^2},\mathcal{B}(\mathbb{R})}
	\\
	&= 
	\left[\{0\}_{(x,y)\in {(0,1)^2}}\right]_{\lambda_{(0,1)^2},\mathcal{B}(\mathbb{R})}.
	\end{split}
	\end{equation*}
	This and \eqref{eq:eigenf:deltah} demonstrate Item~\eqref{item:eigenf:7}.

	The fact that $\mathbb{H}$ is an orthonormal basis together with \eqref{eq:eigenf:deltah} establishes for all $j\in \{1,2\}$, $n_1,n_2, m_1,m_2 \in \Z$ with $n_1 \neq m_1$ or $n_2 \neq m_2$ that
	\begin{equation*}
	\begin{split}
	& \left\langle  \partial_j e_{n_1,n_2,0}, \partial_j e_{m_1,m_2,0}\right\rangle_H 
	\\
	& 
	= 4 \pi^2 n_j m_j \left\langle  e_{(-1)^j n_1, (-1)^{j+1} n_2,0}, e_{(-1)^{j} m_1, (-1)^{j+1} m_2,0}\right\rangle_H
	=0,
	\end{split}
	\end{equation*}
	and $ \left\langle  \partial_j e_{n_1,n_2,0}, \partial_j e_{0,0,1}\right\rangle_H =0$.
	This demonstrates Item~\eqref{item:eigenf:6}.

	The fact that for all $h\in \mathbb{H}$ it holds that $\|h\|_U=1$ shows for all $r\in \R$ that 
	\begin{equation*}
	\begin{split}
	& \max_{j\in \{1,2\}} \sup_{h\in \mathbb{H}} \tfrac{\|\partial_j h\|_U}{|\lambda_h|^{r}} 
	= \max_{j\in \{1,2\}} \sup_{(n_1,n_2)\in \Z\setminus\{(0,0)\}} \tfrac{\|\partial_j e_{n_1, n_2, 0}\|_U}{|\lambda_{e_{n_1, n_2, 0}}|^{r}}
	\\
	& =   
	\max_{j\in \{1,2\}} \sup_{n_1,n_2\in \Z\setminus\{0\}} 
	\tfrac{ 2 \pi |n_j| \|  e_{(-1)^j n_1, (-1)^{j+1} n_2,0}\|_U}{|\lambda_{e_{n_1, n_2, 0}}|^{r}}
	\\
	&  =   
	\max_{j\in \{1,2\}} \sup_{n_1,n_2\in \Z\setminus\{0\}} 
	\tfrac{ 2 \pi |n_j|}{|\lambda_{e_{n_1, n_2, 0}}|^{r}}.
	\end{split}
	\end{equation*}
	The fact that for all $j\in \{1,2\}$, $n_1,n_2\in \N\setminus \{0\}$ it holds that $1 \leq n_j \leq \sqrt{n_1^2+n_2^2}$ 
	implies for all $j \in \{1,2\}$, $n_1,n_2\in \Z\setminus \{0\}$, $r\in [\nicefrac12,\infty)$ that 
	$ 1 \leq 2 \pi |n_j|  \leq  |\lambda_{e_{n_1, n_2, 0}}|^{\nicefrac12} \leq |\lambda_{e_{n_1, n_2, 0}}|^{r}$. 
	Hence for all $r\in [\nicefrac12,\infty)$ it holds that
	\begin{equation*}
	\begin{split}
	& \max_{j\in \{1,2\}} \sup_{h\in \mathbb{H}} \tfrac{\|\partial_j h\|_U}{|\lambda_h|^{r}} 
	\leq
	\max_{j\in \{1,2\}} \sup_{(n_1,n_2)\in \Z\setminus \{0\}} 
	\tfrac{ 2 \pi |n_j| }{|\lambda_{e_{n_1, n_2, 0}}|^{\nicefrac12}} 
	\leq 1.
	\end{split}
	\end{equation*}
	This establishes Item~\eqref{item:eigenf:5}.
	The proof of Lemma~\ref{lem:eigenf} is thus completed.
\end{proof}

\subsection{Properties of the spaces involved}

\begin{sett}(The Laplace operator with periodic boundary conditions) \label{sett:operator}
Assume Setting~\ref{sett:space},
let
$ A \colon D(A) \subseteq H \to H $
be the linear operator which satisfies
$ D(A) = \{ v \in H \colon \sum_{h\in \mathbb{H}} | \lambda_h \left\langle  h , v \right\rangle_H |^2 < \infty \} $
and
$ \forall \, v \in D(A) \colon A v = \sum_{h\in \mathbb{H}} - \lambda_h \left\langle  h , v \right\rangle_H \, h$,
let $\kappa\in [0,\infty)$,
and
let
$ ( H_r, \left\langle   \cdot , \cdot \right\rangle _{ H_r }, \left\| \cdot \right\|_{ H_r } ) $, $ r \in \R $,
be a family of interpolation spaces associated to $ \kappa - A $ (see, e.g., \cite[Section~3.7]{sell2013dynamics}).
\end{sett}

\begin{lemma}[Integration by parts]
	\label{lem:NS:ibp}
	Assume Setting \ref{sett:operator} and let $r \in [\nicefrac12,\infty)$, $\zeta \in (\nicefrac12,\infty)$.
	Then it holds
	\begin{enumerate}[(i)]
		%%%
		\item \label{eq:NS:ibp:gradient_norm} 
		for all $v\in H_r$,  $j\in \{1,2\}$ that $v\in W^{1,2}({(0,1)^2}, \R^2)$, $\partial_j v =\sum_{h\in \mathbb{H}} \left\langle  h,v\right\rangle _H \partial_j h$,
		and 
		$\|\partial_j v\|_U \leq \|v\|_{H_{r}}$,
		%%%
		\item \label{eq:NS:ibp:0divergence} 
		for all $v=(v_1,v_2)\in H_r$ that $\partial_1 v_1 +\partial_2 v_2 =\left[\{0\}_{x\in {(0,1)^2}}\right]_{\lambda_{(0,1)^2},\mathcal{B}(\mathbb{R})}$,
		%%%
		\item \label{eq:NS:Hzeta}
		$H_\zeta \subseteq L^{\infty}(\lambda_{(0,1)^2}; \R^2)$,
		%%%
		\item \label{eq:NS:ibp} 
		for all $i,j,k,l\in \{1,2\}$, $u,v,w\colon (0,1)^2 \to \R^2$ satisfying 
		\[
			[u]_{\lambda_{(0,1)^2},\mathcal{B}(\R^2)}, [v]_{\lambda_{(0,1)^2},\mathcal{B}(\R^2)}, [w]_{\lambda_{(0,1)^2},\mathcal{B}(\R^2)}\in H_\zeta
		\]
		that $[v_i \cdot w_j]_{\lambda_{(0,1)^2},\mathcal{B}(\R)} \in W^{1,2}({(0,1)^2},\R) \cap L^{\infty}(\lambda_{(0,1)^2};\R)$, 
		\begin{equation}
		\partial_k( v_i w_j )= \partial_k v_i w_j+v_i \partial_k w_j,
		\end{equation} 
		and 
		$\left\langle  \partial_k (v_i w_j),u_l\right\rangle _{L^2(\lambda_{(0,1)^2};\R)}=-\left\langle  (v_i w_j),\partial_k u_l\right\rangle _{L^2(\lambda_{(0,1)^2};\R)}$,
		%%%
		\item \label{eq:NS:ibp:FinU}
		for all $v,w=(w_1,w_2)\in H_{\zeta}$ that
		$\sum_j w_j \partial_j v \in U$.
		%%%
		%\item \label{eq:NS:ibp:Fdef} {\color{red} Remove this item from here!}
		%for all $\vt\in \R$, $\left( H_{\zeta} \ni v=(v_1,v_2) \mapsto R\big(\vt \, v -c_1\smallsum_{i=1}^2 v_i \, \partial_i v\big) \in H \right)$ is a well defined function.
	\end{enumerate}
\end{lemma}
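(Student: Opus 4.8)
The plan is to deduce all five items from the spectral description of the interpolation spaces together with the elementary estimates already established in Lemma~\ref{lem:eigenf}. Throughout, fix $r\in[\nicefrac12,\infty)$ and $\zeta\in(\nicefrac12,\infty)$, and recall that for $v\in H_r$ one has $v=\sum_{h\in\mathbb H}\langle h,v\rangle_H h$ with $\sum_{h\in\mathbb H}(\kappa+\lambda_h)^{2r}\langle h,v\rangle_H^2=\|v\|_{H_r}^2<\infty$.

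For Item~\eqref{eq:NS:ibp:gradient_norm} I would first show that the series $\sum_{h\in\mathbb H}\langle h,v\rangle_H\,\partial_j h$ converges in $U$: by Item~\eqref{item:eigenf:6} of Lemma~\ref{lem:eigenf} the terms $\partial_j h$ are pairwise orthogonal, so by Parseval it suffices that $\sum_{h}\langle h,v\rangle_H^2\|\partial_j h\|_U^2<\infty$, and Item~\eqref{item:eigenf:5} of Lemma~\ref{lem:eigenf} (with exponent $\nicefrac12$) bounds $\|\partial_j h\|_U^2\le\lambda_h\le(\kappa+\lambda_h)\le(\kappa+\lambda_h)^{2r}$ since $2r\ge1$. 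This simultaneously gives the norm bound $\|\partial_j v\|_U^2=\sum_h\langle h,v\rangle_H^2\|\partial_j h\|_U^2\le\sum_h(\kappa+\lambda_h)^{2r}\langle h,v\rangle_H^2=\|v\|_{H_r}^2$. To identify this $U$-limit with the distributional derivative $\partial_j v$, I would test against $\phi\in C^\infty_{cpt}$, use continuity of the pairing, the definition of $\partial$ on each basis element, and pass the (finite, then infinite) sum through; membership $v\in W^{1,2}$ then follows because $v\in U$ and $\partial_j v\in U$ for $j\in\{1,2\}$. Item~\eqref{eq:NS:ibp:0divergence} follows by the same termwise argument from Item~\eqref{item:eigenf:7} of Lemma~\ref{lem:eigenf}, since $\partial_1 v_1+\partial_2 v_2=\sum_h\langle h,v\rangle_H(\partial_1 h_1+\partial_2 h_2)=0$.

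Item~\eqref{eq:NS:Hzeta} is a Sobolev-type embedding: since $\zeta>\nicefrac12$ one can pick $\varepsilon\in(0,\zeta-\nicefrac12)$, write $\|\und v\|_{L^\infty}\le\sum_h|\langle h,v\rangle_H|\,\|h\|_{L^\infty}$, apply Item~\eqref{item:eigenf:L4} of Lemma~\ref{lem:eigenf} to bound $\|h\|_{L^\infty}\le2$, and estimate by Cauchy--Schwarz $\sum_h|\langle h,v\rangle_H|\le\big(\sum_h(\kappa+\lambda_h)^{-1-2\varepsilon}\big)^{1/2}\big(\sum_h(\kappa+\lambda_h)^{1+2\varepsilon}\langle h,v\rangle_H^2\big)^{1/2}$; the first factor is finite by Item~\eqref{item:useful:2} of Lemma~\ref{lem:useful}, and $1+2\varepsilon\le2\zeta$ makes the second factor $\le\|v\|_{H_\zeta}$. (One also checks the series converges uniformly so the limit is genuinely the continuous representative.) For Item~\eqref{eq:NS:ibp}, with $u,v,w\in H_\zeta$ each component $v_i,w_j\in W^{1,2}\cap L^\infty$ by \eqref{eq:NS:ibp:gradient_norm} and \eqref{eq:NS:Hzeta}; the Leibniz rule $\partial_k(v_iw_j)=(\partial_k v_i)w_j+v_i(\partial_k w_j)$ for $W^{1,2}\cap L^\infty$ functions is standard (approximate by smooth functions, or cite \cite{runst1996sobolev}), the product lies in $W^{1,2}\cap L^\infty$ because $\partial_k v_i\in U$, $w_j\in L^\infty$ etc., and the integration-by-parts identity against $u_l$ then follows from the definition of $\partial$ applied to the product together with density of $C^\infty_{cpt}$. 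Finally Item~\eqref{eq:NS:ibp:FinU} is immediate: $w_j\partial_j v$ has $w_j\in L^\infty$ by \eqref{eq:NS:Hzeta} and $\partial_j v\in U$ by \eqref{eq:NS:ibp:gradient_norm}, so the product is in $U$, and hence so is the sum over $j$.

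The only genuinely delicate points are the ones involving the nonlinear (product) structure: justifying that the formal Fourier series identifications really coincide with the distributional objects, and invoking the Leibniz rule for the product of two $W^{1,2}\cap L^\infty$ functions on the square. I expect the main obstacle to be Item~\eqref{eq:NS:ibp}—specifically verifying $v_iw_j\in W^{1,2}$ with the stated weak derivative—which I would handle either by a mollification argument on $(0,1)^2$ (extending periodically so boundary terms vanish) or by citing the corresponding pointwise-product estimates in \cite[Section~2.1.2]{runst1996sobolev}; everything else reduces to Parseval, Cauchy--Schwarz, and the summability bounds of Lemma~\ref{lem:useful}.
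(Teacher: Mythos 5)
Your overall route is sound and, for Items~\eqref{eq:NS:ibp:gradient_norm}--\eqref{eq:NS:Hzeta} and \eqref{eq:NS:ibp:FinU}, it is essentially a self-contained version of what the paper outsources: the paper deduces Item~\eqref{eq:NS:ibp:gradient_norm} from \cite[Lemma~4.4]{jentzen2016exponential} combined with Item~\eqref{item:eigenf:5} of Lemma~\ref{lem:eigenf}, proves Item~\eqref{eq:NS:ibp:0divergence} by exactly your termwise argument via Item~\eqref{item:eigenf:7}, and gets Items~\eqref{eq:NS:Hzeta} and \eqref{eq:NS:ibp} by citing \cite[Lemmas~4.3--4.9]{jentzen2016exponential} together with the summability from Lemma~\ref{lem:useful}; your direct Parseval/Cauchy--Schwarz proofs of \eqref{eq:NS:ibp:gradient_norm} and \eqref{eq:NS:Hzeta} are a legitimate (more elementary) substitute. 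Two small points to patch: the inequalities $(\kappa+\lambda_h)\leq(\kappa+\lambda_h)^{2r}$ and $(\kappa+\lambda_h)^{1+2\varepsilon}\leq(\kappa+\lambda_h)^{2\zeta}$ require $\kappa+\lambda_h\geq 1$, which can fail for the two constant modes when $\kappa+\kt<1$ (harmless in \eqref{eq:NS:ibp:gradient_norm} since those modes have vanishing derivative, but in \eqref{eq:NS:Hzeta} you should treat the finitely many low modes separately), and the factor $\sum_h(\kappa+\lambda_h)^{-1-2\varepsilon}$ is controlled by Item~\eqref{item:useful:3} of Lemma~\ref{lem:useful} (after bounding by $\lambda_h^{-1-2\varepsilon}$), not Item~\eqref{item:useful:2}.

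The genuine gap is in the last step of Item~\eqref{eq:NS:ibp}. The identity $\left\langle \partial_k (v_i w_j),u_l\right\rangle_{L^2(\lambda_{(0,1)^2};\R)}=-\left\langle v_i w_j,\partial_k u_l\right\rangle_{L^2(\lambda_{(0,1)^2};\R)}$ does \emph{not} follow ``from the definition of $\partial$ together with density of $C^{\infty}_{cpt}$'': the definition of $\partial$ only pairs against compactly supported smooth test functions, and $u_l$ (a component of an element of $H_\zeta$) is neither compactly supported nor approximable by $C^{\infty}_{cpt}$ functions in $W^{1,2}$ (that density characterizes $W^{1,2}_0$, and the trigonometric basis elements do not belong to it). For general $f,g\in W^{1,2}((0,1)^2,\R)$ the integration-by-parts formula without boundary terms is simply false, so something beyond density is needed: the point is the \emph{periodic} structure of $H_\zeta$, i.e.\ one expands $u$ in the basis $\mathbb{H}$ of smooth periodic functions, integrates by parts against each (finite) truncation where the boundary contributions cancel by periodicity of both factors, and then passes to the limit using \eqref{eq:NS:ibp:gradient_norm} and the $L^\infty$ bound; alternatively one invokes \cite[Lemmas~4.5 and 4.7]{jentzen2016exponential}, which is precisely what the paper does. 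Your mollification-with-periodic-extension idea for the Leibniz rule is fine, but the boundary-cancellation step for the pairing with $u_l$ must be made explicit rather than attributed to density of $C^{\infty}_{cpt}$.
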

\begin{proof}[Proof of Lemma \ref{lem:NS:ibp}.]
	Let us first observe that combining Item~\eqref{item:eigenf:5} in Lemma~\ref{lem:eigenf} and Item (i) and Item (ii) Lemma~4.4 in \cite{jentzen2016exponential}(with $\rho=r, u=v, j=j$ for $v\in H_{r}$, $j\in \{1,2\}$)
	proves that it holds for all $v\in H_{r}$, $j\in \{1,2\}$ that
	$H_{r}\subseteq W^{1,2}({(0,1)^2},\R^2)$, 
	$\partial_j v =\sum_{h\in \mathbb{H}} \left\langle  h,v\right\rangle _H \partial_j h$,
	and $\|\partial_j v\|_U \leq \left(\sup_{h\in H} \|\partial_j h\|_U|\lambda_h|^{-r}  \right) \|v\|_{H_{r}}$. This and Item~\eqref{item:eigenf:5} in Lemma~\ref{lem:eigenf} ensure Item~\eqref{eq:NS:ibp:gradient_norm}.

	Moreover  the fact that for all $v\in H_{r}$, $j\in \{1,2\}$ it holds that
	$\partial_j v =\sum_{h\in \mathbb{H}} \left\langle  h,v\right\rangle _H \partial_j h$ implies that for all $v\in H_{r}, j\in \{1,2\}$ it holds that
	$\|\partial_j v_j -\sum_{h\in \mathbb{H}} \left\langle  h,v\right\rangle _H \partial_j h_j\|_{L^2(\lambda_{(0,1)^2};\R)} 
	\leq \|\partial_j v -\sum_{h\in \mathbb{H}} \left\langle  h,v\right\rangle _H \partial_j h\|_{U}=0$.
	This together with Item~\eqref{item:eigenf:7} in Lemma~\ref{lem:eigenf} shows for all $v=(v_1,v_2)\in H_{r}$ that
	\begin{equation*}
	\begin{split}
	& \|\smallsum_{j=1}^2 \partial_j v_j \|_{L^2(\lambda_{(0,1)^2};\R)} 
	\\
	& =
	\|\smallsum_{j=1}^2 \partial_j v_j -  \smallsum_{h=(h_1,h_2)\in \mathbb{H}} \left\langle  h, v \right\rangle_H \smallsum_{j=1}^2 \partial_j h_j \|_{L^2(\lambda_{(0,1)^2};\R)}
	\\
	& \leq
	\smallsum_{j=1}^2   \|\partial_j v_j -  \smallsum_{h=(h_1,h_2)\in \mathbb{H}} \left\langle  h, v \right\rangle_H  \partial_j h_j \|_{L^2(\lambda_{(0,1)^2};\R)}
	=0.
	\end{split}
	\end{equation*}
	%% equations (211) and (213)-(215) in the proof of Corollary~4.15 in \cite{jentzen2016exponential}
	%% We e obtain for all $v=(v_1,v_2)\in H_{r}$ that 
	%% $\partial_1 v_1 +\partial_2 v_2 =\left[\{0\}_{x\in {(0,1)^2}}\right]_{\lambda_{(0,1)^2},\mathcal{B}(\mathbb{R})}$.
	This establishes Item~\eqref{eq:NS:ibp:0divergence}.

	Next note that 
	%%% in the proof of Corollary~4.15 it has been shown that 
	$\sum_{h\in \mathbb{H}}|\lambda_h|^{-2\zeta}<\infty$ (see e.g. Item~\eqref{item:useful:3} in Lemma~\ref{lem:useful}).
	Combining this with Items~\eqref{item:eigenf:L4} and \eqref{item:eigenf:5} in Lemma~\ref{lem:eigenf} with
	Lemma~4.9 in \cite{jentzen2016exponential} (with $\rho=\zeta$, $u=v$) 
	% in the notation of Lemma~4.9 in \cite{jentzen2016exponential}) 
	and \cite[Lemma~4.5 and Lemma~4.7]{jentzen2016exponential}
	establishes Item~\eqref{eq:NS:Hzeta} and Item~\eqref{eq:NS:ibp}.

	Let $v,w\in H_\zeta$ be fixed for the entire proof of Item~\eqref{eq:NS:ibp:FinU}. 
	Note that Item~\eqref{eq:NS:Hzeta} and Cauchy-Schwarz inequality ensure that $w \in L^{\infty}(\lambda_{(0,1)^2};\R^2)$ and $\sum_{j=1}^2 \|w_j\|_{L^{\infty}(\lambda_{(0,1)^2};\R)} \leq \sqrt{2} \|w\|_{L^{\infty}(\lambda_{(0,1)^2};\R^2)}<\infty$.
	Moreover Item~\eqref{eq:NS:ibp:gradient_norm} assures that $\|\partial_j v\|_U \leq \|v\|_{H_{\zeta}}<\infty$ for all $j\in \{1,2\}$.
	This and the triangle inequality show that
\begin{equation*}
	\big\| \smallsum_{j=1}^2 w_j \partial_j v \big\|_U 
	\leq \smallsum_{j=1}^2 \|  w_j \|_{L^{\infty}(\lambda_{(0,1)^2};\R)} \|\partial_j v \|_U
	\leq \sqrt{2} \|v\|_{H_\zeta} \|  w \|_{L^{\infty}(\lambda_{(0,1)^2};\R^2)} 
	<\infty.
\end{equation*}
	This establishes Item~\eqref{eq:NS:ibp:FinU}.
	
	%
	%
%	In addition note that the fact that $\zeta \in (\nicefrac12,\infty)$ together with Item~\eqref{eq:NS:Hzeta} and Item~\eqref{eq:NS:ibp:FinU} ensures for all $v\in H_{\zeta}$ that 
%	$\sum_{j=1}^2 v_j \partial_j v\in U$ and 
%	$F(v)= R \big(\vt \, v -c_1\smallsum_{i=1}^2 v_i \, \partial_i v\big) $.
%	This proves Item~\eqref{eq:NS:ibp:Fdef}.
	%
	%
	The proof of Lemma~\ref{lem:NS:ibp} is thus completed.
\end{proof}

%%%%%%%%%%%%%%%%%%%%%%%%%%%%%%%%%%

\begin{lemma}[Sobolev embeddings] \label{lem:NS:cont} % {\color{red} [possible references Di Nezza et al. or Leoni]}
	Assume Setting \ref{sett:operator} and let $\zeta\in (\nicefrac12,\infty)$, $v \in H_{\zeta}$, $\beta\in (0,1)$, $p\in (\nicefrac{2}{\beta},\infty)$, $w\in \W^{\beta, p}((0,1)^2, \mathbb{R}^2) $.
	Then there exist $u_1,u_2 \in C({(0,1)^2},\R^2)$ such that $v=[u_1]_{\lambda_{(0,1)^2}, \mathcal{B}(\R^2)}$ and $w=[u_2]_{\lambda_{(0,1)^2}, \mathcal{B}(\R^2)}$. 
\end{lemma}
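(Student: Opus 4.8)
The plan is to derive both conclusions from the same source: the classical Sobolev embedding theorem for Sobolev--Slobodeckij spaces, which states that $\W^{\beta,p}((0,1)^2,\R^2)$ embeds continuously into a H\"older space $C^{0,\alpha}((0,1)^2,\R^2)$ (and in particular into $C((0,1)^2,\R^2)$) whenever $\beta p > 2$, i.e.~$p > \nicefrac{2}{\beta}$; see, e.g., \cite[Section~2.1.2]{runst1996sobolev} or \cite{runst1996sobolev} for the precise statement. Since the two claims about $v$ and $w$ are of exactly the same nature, the work reduces to exhibiting, for each of them, a single representative that is continuous; the statement about equivalence classes then follows because a continuous representative, if it exists, is unique up to nothing (two continuous functions agreeing $\lambda_{(0,1)^2}$-a.e.~agree everywhere on $(0,1)^2$).

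For $w$ this is immediate: by hypothesis $w \in \W^{\beta,p}((0,1)^2,\R^2)$ with $\beta \in (0,1)$ and $p > \nicefrac{2}{\beta}$, so the Sobolev embedding gives a continuous function $u_2 \in C((0,1)^2,\R^2)$ with $w = [u_2]_{\lambda_{(0,1)^2},\mathcal{B}(\R^2)}$. For $v$ I would first reduce to the same situation: since $\zeta \in (\nicefrac12,\infty)$, pick real numbers $\theta \in (\nicefrac12, \min(1,\zeta))$ and $q \in (\nicefrac{2}{\theta},\infty)$; then $\theta q > 2$, so it suffices to show $v$ has a representative in $\W^{\theta,q}((0,1)^2,\R^2)$ and invoke the embedding again. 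That $H_\zeta \hookrightarrow \W^{\theta,q}((0,1)^2,\R^2)$ (or more directly $H_\zeta \hookrightarrow C((0,1)^2,\R^2)$) should already be essentially contained in the machinery behind Lemma~\ref{lem:NS:ibp}: Item~\eqref{eq:NS:Hzeta} there gives $H_\zeta \subseteq L^\infty(\lambda_{(0,1)^2};\R^2)$, and the same references \cite[Lemma~4.5, Lemma~4.7, Lemma~4.9]{jentzen2016exponential}, together with Items~\eqref{item:eigenf:L4} and \eqref{item:eigenf:5} in Lemma~\ref{lem:eigenf} (uniform $L^\infty$-bounds on the basis $\mathbb{H}$ and the decay $\|\partial_j h\|_U \lesssim |\lambda_h|^{\nicefrac12}$), control the Sobolev--Slobodeckij seminorm of the truncated expansions $\sum_{h\in\mathbb{H}_n}\langle h,v\rangle_H\,h$ and let one pass to the limit.

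Concretely, I would estimate $\| \sum_{h\in\mathbb{H}_n}\langle h,v\rangle_H h \|_{\W^{\theta,q}}$ by splitting the double integral defining the seminorm, using the uniform bound $\sup_{h\in\mathbb{H}}\|h\|_{L^\infty} \le 2$ for the short-range part and the gradient decay for the long-range part, and then applying Cauchy--Schwarz together with the summability $\sum_{h\in\mathbb{H}}|\lambda_h|^{-2\zeta}<\infty$ from Item~\eqref{item:useful:3} in Lemma~\ref{lem:useful} (valid since $2\zeta > 1$) to bound everything by a constant times $\|v\|_{H_\zeta}$, uniformly in $n$. This shows $(\sum_{h\in\mathbb{H}_n}\langle h,v\rangle_H h)_{n\in\N}$ is Cauchy in $\W^{\theta,q}$, its limit represents $v$, and the embedding then supplies the continuous representative $u_1$. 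The main obstacle is purely bookkeeping: pinning down the exact Sobolev--Slobodeckij estimate and the admissible range of $\theta,q,p$ so that the fractional exponents match up — there is no conceptual difficulty, and most of it can be quoted from \cite[Section~4]{jentzen2016exponential} and \cite{runst1996sobolev} rather than reproved.
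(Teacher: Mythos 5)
Your treatment of $w$ coincides with the paper's: since $\beta p>2$, the Sobolev--Slobodeckij embedding directly yields a continuous representative $u_2$, and there is nothing to add there. The problem is in your $v$-half. You propose to fix $\theta\in(\nicefrac12,\min(1,\zeta))$, take an \emph{arbitrary} $q\in(\nicefrac{2}{\theta},\infty)$, and establish $H_\zeta\hookrightarrow \W^{\theta,q}((0,1)^2,\R^2)$ by estimating the Slobodeckij norm of the partial sums $\sum_{h\in\mathbb{H}_n}\langle h,v\rangle_H\,h$ termwise, using $\sup_{h\in\mathbb{H}}\|h\|_{L^\infty(\lambda_{(0,1)^2};\R^2)}\le 2$, the bound $\|\partial_j h\|_U\le\lambda_h^{\nicefrac12}$, and Cauchy--Schwarz against $\smallsum_{h\in\mathbb{H}}\lambda_h^{-2\zeta}<\infty$. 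This does not work in the stated generality. First, the target embedding itself is only available under the differential-dimension restriction $\theta-\nicefrac{2}{q}\le 2\zeta-1$ (recall $H_\zeta\subseteq \W^{2\zeta,2}$ has smoothness $2\zeta$ measured in $L^2$), so "any $q>\nicefrac{2}{\theta}$" is not legitimate when $\zeta$ is close to $\nicefrac12$ and $q$ is large; $q$ would have to be chosen in a narrow window depending on $\zeta$ and $\theta$. Second, and more decisively, the estimate you sketch can at best deliver smoothness of order strictly below $2\zeta-1$: a termwise bound gives $\smallsum_{h}|\langle h,v\rangle_H|\,\lambda_h^{\nicefrac{\theta}{2}}\le\big(\smallsum_h\lambda_h^{\theta-2\zeta}\big)^{\nicefrac12}\|v\|_{H_\zeta}$, and the series converges only if $2\zeta-\theta>1$. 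For $\zeta\in(\nicefrac12,\nicefrac34]$ this forces $\theta<\nicefrac12$, which is incompatible with your choice $\theta>\nicefrac12$; so the key step "show $v$ has a representative in $\W^{\theta,q}$" is exactly the one your argument cannot produce in the delicate part of the parameter range.

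Both issues disappear if you argue as the paper does: $v\in H_\zeta\subseteq \W^{2\zeta,2}((0,1)^2,\R^2)$ and $2\cdot 2\zeta>2$, so the Sobolev embedding applies directly with the original exponent pair $(2\zeta,2)$ and no auxiliary $(\theta,q)$ is needed. Alternatively, and even more cheaply, you do not need any Slobodeckij space for $v$ at all: by Cauchy--Schwarz, Item~\eqref{item:useful:3} in Lemma~\ref{lem:useful} (with $2\zeta>1$), and Item~\eqref{item:eigenf:L4} in Lemma~\ref{lem:eigenf}, it holds that $\smallsum_{h\in\mathbb{H}}|\langle h,v\rangle_H|\,\|h\|_{L^{\infty}(\lambda_{(0,1)^2};\R^2)}\le 2\big(\smallsum_{h\in\mathbb{H}}\lambda_h^{-2\zeta}\big)^{\nicefrac12}\|v\|_{H_\zeta}<\infty$, so the partial sums of continuous functions $\smallsum_{h\in\mathbb{H}_n}\langle h,v\rangle_H\,\und{h}$ converge absolutely and uniformly on $(0,1)^2$; their limit is continuous and represents $v$, which is the desired $u_1$. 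Either repair makes your proof complete; as written, the $v$-half has a genuine gap.
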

\begin{proof}[Proof of Lemma \ref{lem:NS:cont}.]
	First, note that $v\in H_\zeta \subseteq W^{2\zeta,2}({(0,1)^2},\R^2)$ hence Sobolev embedding theorem proves that there exists $u_1\in \mathcal{C}({(0,1)^2},\R^2)$ such that $u=[u_1]_{\lambda_{(0,1)^2}, \mathcal{B}(\R^2)}$.
	Sobolev embedding theorem ensures that there exists $u_2\in \mathcal{C}^{0,\frac{\beta p - 2}{p}}({(0,1)^2},\R^2)$ such that $w=[u_2]_{\lambda_{(0,1)^2}, \mathcal{B}(\R^2)}$.
	The proof of Lemma~\ref{lem:NS:cont} is thus completed.
\end{proof}

%%%%%%%%%%%%%%%%%%%%%%%
\section{Properties of the non linearity} \label{sec:F}
%%%%%%%%%%%%%%%%%%%%%%%

\begin{sett}\label{sett:F}
Assume Setting~\ref{sett:operator},
$ c_1, \vt \in \R$,
$\rho\in (\nicefrac12,1)$,
let $R\in L(U)$ be the orthogonal projection of $U$ on $H$,
and let $F \colon H_{\rho} \to H$ satisfy for all 
$v \in H_{\rho} $ that 
\begin{equation}\label{setting:F:B}
F(v)=R\big(\vt \, v -c_1\smallsum_{i=1}^2 v_i \, \partial_i v\big).
\end{equation}
\end{sett}

\noindent Note that Item~\eqref{eq:NS:ibp:FinU} in Lemma~\ref{lem:NS:ibp} assures that the function in \eqref{setting:F:B} is well defined.

\begin{lemma}[Generalized coercivity-type condition]
	\label{coer:NS}
	Assume Setting \ref{sett:F},
	let $\varepsilon\in (0,\infty)$ and let $v=(v_1,v_2), w=(w_1,w_2) \in H_{\rho}$.
	Then it holds that
%	$\sum_{j=1}^2 v_j \partial_j (v)\in U$ and
	\begin{equation*}
	\begin{split}
	\left| \left\langle   v, F( v + w ) \right\rangle _H \right| 
	& \leq 
	\left( \frac{3}{2}|\vt| + \frac{ c_1^2}{2\varepsilon} \big[\!\sup\nolimits_{x \in (0,1)^2} |\und{w}(x)|_2^2\big]\right) \| v \|^2_H 
	+ 2 \varepsilon \|v\|^2_{H_{\nicefrac12}}
	\\
	& \quad + 
	\left(\frac{|\vt|}{2} \big[\!\sup\nolimits_{x \in (0,1)^2} |\und{w}(x)|_2^2\big] + \frac{ c_1^2}{2\varepsilon}\big[\!\sup\nolimits_{x \in (0,1)^2} |\und{w}(x)|_2^4\big]\right) 
	.
	\end{split}
	\end{equation*}
\end{lemma}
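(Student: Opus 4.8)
The plan is to rewrite the $H$-inner product as a $U$-inner product, expand the quadratic nonlinearity of $v+w$, cancel the two ``diagonal'' trilinear terms via the classical antisymmetry of the Navier--Stokes form, estimate the two remaining terms by Hölder's and the Cauchy--Schwarz inequalities, and balance the outcome with Young's inequality. Concretely, since $R$ is the orthogonal projection of $U$ onto $H$ and $v\in H$, we have $\langle v,Ru\rangle_H=\langle v,u\rangle_U$ for every $u\in U$; together with Item~\eqref{eq:NS:ibp:FinU} in Lemma~\ref{lem:NS:ibp}, which ensures $\smallsum_{i=1}^2(v+w)_i\,\partial_i(v+w)\in U$ so that $F(v+w)$ is well defined, this gives, writing $u:=v+w\in H_\rho$,
\[
\langle v,F(v+w)\rangle_H=\vt\,\langle v,u\rangle_U-c_1\smallsum_{i=1}^2\langle v,u_i\,\partial_i u\rangle_U .
\]
By Lemma~\ref{lem:NS:cont} (applied with $\zeta=\rho$) the functions $v$ and $w$ possess continuous representatives $\und{v},\und{w}$, so the suprema in the statement are meaningful, and since $(0,1)^2$ has Lebesgue measure $1$ one has $\|w\|_U\le\|w\|_{L^\infty(\lambda_{(0,1)^2};\R^2)}=\sup_{x\in(0,1)^2}|\und{w}(x)|_2$. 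Using $\|v\|_H=\|v\|_U$ and Young's inequality, the linear term is controlled by $|\vt\,\langle v,u\rangle_U|\le|\vt|\bigl(\|v\|_H^2+\|v\|_H\|w\|_U\bigr)\le\tfrac32|\vt|\,\|v\|_H^2+\tfrac{|\vt|}2\sup_{x}|\und{w}(x)|_2^2$, which already supplies two of the four summands in the claim.

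For the quadratic part, expand $u_i\,\partial_i u_j=(v_i+w_i)(\partial_i v_j+\partial_i w_j)$ into four products and apply the product rule together with the boundary-term-free integration by parts of Item~\eqref{eq:NS:ibp} in Lemma~\ref{lem:NS:ibp} and the divergence-free identities $\smallsum_i\partial_i v_i=\smallsum_i\partial_i w_i=0$ from Item~\eqref{eq:NS:ibp:0divergence}. The two contributions that are diagonal in $v$, namely $\smallsum_{i,j}\int_{(0,1)^2}(\partial_i v_j)\,v_i v_j\,dx$ and $\smallsum_{i,j}\int_{(0,1)^2}w_i(\partial_i v_j)\,v_j\,dx$, equal (up to sign) $\tfrac12\int_{(0,1)^2}(\operatorname{div}v)\,|\und{v}|_2^2\,dx$ and $\tfrac12\int_{(0,1)^2}(\operatorname{div}w)\,|\und{v}|_2^2\,dx$ and therefore vanish. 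Moving the remaining derivatives off $w$ by one further integration by parts and invoking $\operatorname{div}v=0$ once more, one is left with
\[
\smallsum_{i=1}^2\langle v,u_i\,\partial_i u\rangle_U=-\smallsum_{i,j=1}^2\int_{(0,1)^2}(\partial_i v_j)\,(v_i+w_i)\, w_j\,dx .
\]

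To finish, split the factor $v_i+w_i$ and estimate the two resulting integrals. Hölder's inequality (with $v_i$, $w_i$ and $w$ in $L^2$, $L^\infty$ and $\partial_i v$ in $L^2$) followed by the Cauchy--Schwarz inequality in the index $i\in\{1,2\}$, together with $\smallsum_i\|\partial_i v\|_U^2\le 2\|v\|_{H_{\nicefrac12}}^2$ (from Item~\eqref{eq:NS:ibp:gradient_norm} with $r=\nicefrac12$, applicable since $v\in H_\rho\subseteq H_{\nicefrac12}$), $\smallsum_i\|v_i\|_{L^2}^2=\|v\|_H^2$ and $\smallsum_i\|w_i\|_{L^2}^2=\|w\|_U^2\le\sup_{x}|\und{w}(x)|_2^2$, bounds $\bigl|\smallsum_{i,j}\int(\partial_i v_j)v_iw_j\,dx\bigr|$ by $\sqrt2\,\|v\|_H\,\|v\|_{H_{\nicefrac12}}\sup_{x}|\und{w}(x)|_2$ and $\bigl|\smallsum_{i,j}\int(\partial_i v_j)w_iw_j\,dx\bigr|$ by $\sqrt2\,\|v\|_{H_{\nicefrac12}}\sup_{x}|\und{w}(x)|_2^2$. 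Multiplying by $|c_1|$ and applying the inequality $ab\le\tfrac1{4\varepsilon}a^2+\varepsilon b^2$ to each of the two products with $b=\|v\|_{H_{\nicefrac12}}$ (so that the two $\|v\|_{H_{\nicefrac12}}^2$-terms add up to exactly $2\varepsilon\|v\|_{H_{\nicefrac12}}^2$) yields the coefficient $\tfrac{c_1^2}{2\varepsilon}\sup_{x}|\und{w}(x)|_2^2$ in front of $\|v\|_H^2$ and the constant term $\tfrac{c_1^2}{2\varepsilon}\sup_{x}|\und{w}(x)|_2^4$; adding the linear estimate from the first step gives the asserted inequality.

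The only genuinely delicate point is the middle step: one has to verify that every product occurring there ($v_iv_j$, $v_iw_j$, $w_iw_j$, and so on) is regular enough for the product rule and for the integration by parts without boundary contributions to be valid — which is precisely what Items~\eqref{eq:NS:Hzeta}, \eqref{eq:NS:ibp} and \eqref{eq:NS:ibp:FinU} of Lemma~\ref{lem:NS:ibp} furnish, given $\rho>\nicefrac12$ — and, in the last step, to keep the $\sqrt2$-factors sharp so that the claimed constants are attained exactly rather than with a larger multiplier. Everything else is routine bookkeeping.
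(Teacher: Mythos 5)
Your argument is correct and follows essentially the same route as the paper: rewrite the pairing over $U$, use the integration-by-parts and divergence-free identities of Lemma~\ref{lem:NS:ibp} to cancel the trilinear terms that are quadratic in $v$ (arriving at the same identity $\smallsum_i\langle v,u_i\partial_i u\rangle_U=-\smallsum_{i,j}\langle \partial_i v_j,(v_i+w_i)w_j\rangle_{L^2(\lambda_{(0,1)^2};\R)}$ that the paper obtains via its auxiliary cancellation $\smallsum_{j,i}\langle u_j\partial_j v_i,v_i\rangle=0$), then Cauchy--Schwarz, the bound $\smallsum_j\|\partial_j v\|_U^2\le 2\|v\|_{H_{\nicefrac12}}^2$, and Young's inequality, yielding exactly the stated constants. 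One cosmetic point: in your last integration by parts the divergence that must vanish is that of $v+w$, so you invoke $\operatorname{div}w=0$ as well as $\operatorname{div}v=0$ --- both of which you had already recorded, so nothing is missing.
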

\begin{proof}[Proof of Lemma \ref{coer:NS}.]
	First note that 
	\begin{equation*}
	\begin{split}
	& \left\langle  v, F( v + w ) \right\rangle_H  = \left\langle  v, F( v + w ) \right\rangle_U \\
% & =\left\langle   v, \vt ( v + w )  - c_1 \, \smallsum_{j=1}^2 (v_j+w_j) \, \partial_j ( v + w )  \right\rangle _U \\
	& =  \vt \left\langle   v, v+w \right\rangle _H   -c_1 \smallsum_{j,i=1}^2 \left\langle   v_i,  (v_j+w_j) \partial_j (v_i+w_i) \right\rangle _{L^2(\lambda_{(0,1)^2}; \R)}.
	\end{split}
	\end{equation*}
	This and Item~\eqref{eq:NS:ibp} in Lemma~\ref{lem:NS:ibp} yield
	\begin{equation*}
	\begin{split}
	& \left\langle   v, F( v + w ) \right\rangle _H  \\
	& =  \vt \left\langle   v, v+w \right\rangle _H   + c_1 \smallsum_{j,i=1}^2 \left\langle   \partial_j \left(v_i ( v_j +w_j)\right),  v_i + w_i\right\rangle _{L^2(\lambda_{(0,1)^2}; \R)}\\
	& =  \vt \left\langle   v, v+w \right\rangle _H   + c_1 \smallsum_{j,i=1}^2 \left\langle   \partial_j v_i , ( v_j +w_j) ( v_i + w_i)\right\rangle _{L^2(\lambda_{(0,1)^2}; \R)} \\
	& \quad + c_1 \left\langle    \smallsum_{j=1}^2\partial_j  v_j +  \smallsum_{j=1}^2 \partial_j w_j,  \smallsum_{i=1}^2 ( v^2_i + v_i w_i) \right\rangle _{L^2(\lambda_{(0,1)^2}; \R)}.
	\end{split}
	\end{equation*}
Item~\eqref{eq:NS:ibp:0divergence} in Lemma~\ref{lem:NS:ibp} hence shows that 
	\begin{equation*}
	\begin{split}
	& \left\langle   v, F( v + w ) \right\rangle _H  \\
	& =  \vt \left\langle   v, v+w \right\rangle _H   + c_1 \smallsum_{j,i=1}^2 \left\langle   \partial_j v_i , ( v_j +w_j) ( v_i + w_i)\right\rangle _{L^2(\lambda_{(0,1)^2}; \R)}.
	\end{split}
	\end{equation*}
Moreover observe that for all $u\in H_{\rho}$ it holds that 
	$
	\sum_{j,i=1}^2 \left\langle    \partial_j v_i, u_j v_i\right\rangle _{L^2(\lambda_{(0,1)^2}; \R)}=0
	$ 
because Item~\eqref{eq:NS:Hzeta}, Item~\eqref{eq:NS:ibp}, and Item~\eqref{eq:NS:ibp:0divergence} in Lemma~\ref{lem:NS:ibp} imply that
	\begin{equation*}
	\begin{split}
	& \smallsum_{j,i=1}^2 \left\langle   u_j \partial_j v_i, v_i\right\rangle _{L^2(\lambda_{(0,1)^2}; \R)} =  - \smallsum_{j,i=1}^2 \left\langle   v_i,  \partial_j (u_j v_i)\right\rangle _{L^2(\lambda_{(0,1)^2}; \R)}\\
	& = - \smallsum_{j,i=1}^2 \left\langle   v^2_i,  \partial_j u_j\right\rangle _{L^2(\lambda_{(0,1)^2}; \R)} - \smallsum_{j,i=1}^2 \left\langle   u_j \partial_j v_i, v_i \right\rangle _{L^2(\lambda_{(0,1)^2}; \R)}\\
	& = -  \left\langle   \smallsum_{i=1}^2 v^2_i, \smallsum_{j=1}^2 \partial_j u_j\right\rangle _{L^2(\lambda_{(0,1)^2}; \R)} - \smallsum_{j,i=1}^2 \left\langle   u_j \partial_j v_i, v_i \right\rangle _{L^2(\lambda_{(0,1)^2}; \R)}\\
	& =
	- \smallsum_{j,i=1}^2 \left\langle   u_j \partial_j v_i, v_i \right\rangle _{L^2(\lambda_{(0,1)^2}; \R)}.
	\end{split}
	\end{equation*}
Therefore, we obtain that
	\begin{equation*}
	\begin{split}
	& \left\langle   v, F( v + w ) \right\rangle _H  \\
	& =  \vt \left\langle   v, v+w \right\rangle _H   + c_1 \smallsum_{j,i=1}^2 \left\langle   \partial_j v_i ,  (v_j + w_j ) w_i \right\rangle _{L^2(\lambda_{(0,1)^2}; \R)}.
	\end{split}
	\end{equation*}
	The Cauchy-Schwarz inequality yields
	\begin{equation*}
	\begin{split}
	& \left|\left\langle   v, F( v + w ) \right\rangle _H  \right|
	\\
	& \leq |\vt| \| v \|_H (\| v \|_H + \|w\|_H )   
	\\
	& \qquad + |c_1| \smallsum_{j,i=1}^2  \|\partial_j v_i \|_{L^2(\lambda_{(0,1)^2}; \R)}  \left(\|v_j w_i\|_{L^2(\lambda_{(0,1)^2}; \R)} + \|w_j w_i \|_{L^2(\lambda_{(0,1)^2}; \R)}\right)\\
	& \leq \frac32 |\vt| \| v \|_H^2+ \frac12 |\vt| \|w\|^2_H  
	 + |c_1| \left(\smallsum_{j,i=1}^2  \|\partial_j v_i \|_{L^2(\lambda_{(0,1)^2}; \R)}^2 \right)^{\nicefrac12}  
	 \\
	 & \qquad \cdot
	\left[\left(\smallsum_{j,i=1}^2 \|v_j w_i\|_{L^2(\lambda_{(0,1)^2}; \R)}^2 \right)^{\!\nicefrac12} 
	+ \left(\smallsum_{j,i=1}^2 \|w_j w_i \|_{L^2(\lambda_{(0,1)^2}; \R)}^2\right)^{\!\nicefrac12} \right].
	\end{split}
	\end{equation*}
	Furthermore the fact that for all $a,b\in \mathbb{R}$,  $ 2 a b \leq \varepsilon a^2 +\tfrac{b^2}{\varepsilon}$, together with Item~\eqref{eq:NS:ibp:gradient_norm} in Lemma~\ref{lem:NS:ibp}, yields
	\begin{equation} \label{eq:coer:NS:preliminary}
	\begin{split}
	& \left| \left\langle   v, F( v + w ) \right\rangle _H \right| 
	\\
	& \leq \frac32 |\vt| \| v \|_H^2+ \frac12 |\vt| \|w\|^2_H  + \varepsilon \smallsum_{j,i=1}^2  \|\partial_j v_i \|_{L^2(\lambda_{(0,1)^2}; \R)}^2 \\
	& \quad + \frac{c_1^2}{2\varepsilon} \smallsum_{j,i=1}^2 \|v_j w_i\|_{L^2(\lambda_{(0,1)^2}; \R)}^2  + \frac{c_1^2}{2\varepsilon} \smallsum_{j,i=1}^2 \|w_j w_i \|_{L^2(\lambda_{(0,1)^2}; \R)}^2\\
	& \leq \frac32 |\vt| \| v \|_H^2+ \frac12 |\vt| \|w\|^2_H  + 2 \varepsilon \|v\|_{H_{\nicefrac12}}^2 \\
	& \quad + \frac{c_1^2}{2\varepsilon} \smallsum_{j,i=1}^2 \|v_j w_i\|_{L^2(\lambda_{(0,1)^2}; \R)}^2  + \frac{c_1^2}{2\varepsilon} \smallsum_{j,i=1}^2 \|w_j w_i \|_{L^2(\lambda_{(0,1)^2}; \R)}^2.
	\end{split}
	\end{equation}
	Furthermore observe that 
	\begin{equation*}
	\begin{split}
	& \sum_{j,i=1}^2 \| v_j w_i\|_{L^2(\lambda_{(0,1)^2}; \R)}^2 
	= \sum_{j,i=1}^2 \int_{(0,1)^2} |v_j(x)|^2 |w_i(x)|^2 d x\\
	& \leq \left[\sup\nolimits_{x \in (0,1)^2} |\underline{w}(x)|_2^2\right] \sum_{j=1}^2 \int_{(0,1)^2} |v_j(x)|^2 d x
	=  \left[\sup\nolimits_{x \in (0,1)^2} |\underline{w}(x)|_2^2\right] \|v\|_H^2,
	\end{split}
	\end{equation*}
	$\|w\|^2_H \leq \left[\sup\nolimits_{x \in (0,1)^2} |\underline{w}(x)|_2^2\right] $ (that is well defined due to the fact that $w\in H_{\rho}$ and, e.g., Lemma~\ref{lem:NS:cont}),
	and 
	$\sum_{j,i=1}^2 \| w_j w_i\|_{L^2(\lambda_{(0,1)^2}; \R)}^2  \leq \left[\sup\nolimits_{x \in (0,1)^2} |\underline{w}(x)|_2^4\right]
	$.
	%%%
	%%% {\color{red}(if one wants $\|w\|^4_\infty$) and
	%%% \begin{equation}
	%%% \sum_{j,i=1}^2 \| w_j w_i\|_{L^2(\lambda_{(0,1)^2}; \R)}^2  \leq \sup\nolimits_{x \in (0,1)^2} |\underline{w}(x)|^4
	%%% \end{equation}
	%%%}
	This, together with \eqref{eq:coer:NS:preliminary}, completes the proof of Lemma \ref{coer:NS}.
\end{proof}

%%%%%%%%%%%%%%%%%%%%%%%

\begin{lemma}[Lipschitzianity on bounded sets] \label{lem:locLip}
	Assume Setting \ref{sett:F} 
	and let $\theta\in [0,\infty]$ satisfy 
	\begin{equation*}
	\theta = \max\left\{|\vt| \left[\sup\nolimits_{ u\in {H_{\rho}}\setminus\{0\} } \tfrac{\|u\|_{H}} {\|u\|_{H_{\rho}}}\right] , 4 |c_1| \left[ \smallsum_{h\in \mathbb{H}} (\lambda_h)^{-2\rho} \right]^{\! \nicefrac12} \right\}.
	\end{equation*}
	Then $\theta \in [0,\infty)$, $F \in C(H_{\rho},H)$, and for all $v,w\in H_{\rho}$ it holds that
	\begin{equation*}
	\|F(v)-F(w)\|_H \leq \theta (1+\|v\|_{H_{\rho}} + \|w\|_{H_{\rho}})\|v-w\|_{H_{\rho}}
	<\infty.
	\end{equation*}
\end{lemma}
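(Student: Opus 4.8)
The plan is to derive all three assertions from the algebraic identity $v_i\partial_i v-w_i\partial_i w=v_i\partial_i(v-w)+(v_i-w_i)\partial_i w$ together with the estimates collected in Section~\ref{sec:prelim}. First I would verify $\theta\in[0,\infty)$. For the first entry of the maximum, since $\kappa\ge 0$ and $\lambda_h\ge\kt>0$ for every $h\in\mathbb{H}$, the eigenvalues $\kappa+\lambda_h$ of $\kappa-A$ satisfy $\inf_{h\in\mathbb{H}}(\kappa+\lambda_h)\ge\kt$, whence $\sup_{u\in H_{\rho}\setminus\{0\}}\tfrac{\|u\|_H}{\|u\|_{H_{\rho}}}=[\inf_{h\in\mathbb{H}}(\kappa+\lambda_h)]^{-\rho}\le\kt^{-\rho}<\infty$. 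For the second entry, $\rho>\nicefrac12$ gives $2\rho-1\in(0,\infty)$, so Item~\eqref{item:useful:3} in Lemma~\ref{lem:useful} (with $\varepsilon=2\rho-1$) yields $\smallsum_{h\in\mathbb{H}}\lambda_h^{-2\rho}<\infty$. Hence $\theta<\infty$.

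The key auxiliary step is the quantitative embedding: for every $u\in H_{\rho}$ it holds that $\|u\|_{L^{\infty}(\lambda_{(0,1)^2};\R^2)}\le 2\,[\smallsum_{h\in\mathbb{H}}\lambda_h^{-2\rho}]^{\nicefrac12}\|u\|_{H_{\rho}}$. I would obtain this by writing $u=\smallsum_{h\in\mathbb{H}}\left\langle h,u\right\rangle_H h$, using Item~\eqref{item:eigenf:L4} in Lemma~\ref{lem:eigenf} (that is $\|h\|_{L^{\infty}}\le 2$) to bound $|\underline{u}(x)|_2\le 2\smallsum_{h\in\mathbb{H}}|\left\langle h,u\right\rangle_H|$ pointwise, and then Cauchy--Schwarz in $\ell^2(\mathbb{H})$ together with $\kappa+\lambda_h\ge\lambda_h$ to estimate $\smallsum_{h\in\mathbb{H}}|\left\langle h,u\right\rangle_H|\le[\smallsum_{h\in\mathbb{H}}(\kappa+\lambda_h)^{-2\rho}]^{\nicefrac12}\|u\|_{H_{\rho}}\le[\smallsum_{h\in\mathbb{H}}\lambda_h^{-2\rho}]^{\nicefrac12}\|u\|_{H_{\rho}}$. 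That the sup-norm is finite (so the continuous representative is meaningful) is guaranteed by Item~\eqref{eq:NS:Hzeta} in Lemma~\ref{lem:NS:ibp}, applicable since $\rho>\nicefrac12$. Componentwise this gives $\|u_i\|_{L^{\infty}(\lambda_{(0,1)^2};\R)}\le 2[\smallsum_{h\in\mathbb{H}}\lambda_h^{-2\rho}]^{\nicefrac12}\|u\|_{H_{\rho}}$ for $i\in\{1,2\}$, since $|u_i(x)|\le|\underline{u}(x)|_2$.

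For the main estimate, fix $v,w\in H_{\rho}$ and split $F(v)-F(w)=R(\vt(v-w))-c_1R\big(\smallsum_{i=1}^2(v_i\partial_i v-w_i\partial_i w)\big)$. Since $R$ is an orthogonal projection, $\|R(\vt(v-w))\|_H\le|\vt|\,\|v-w\|_H\le|\vt|\,[\sup_{u\neq 0}\tfrac{\|u\|_H}{\|u\|_{H_{\rho}}}]\,\|v-w\|_{H_{\rho}}\le\theta\|v-w\|_{H_{\rho}}$. For the nonlinear part, $\|R(\cdot)\|_H\le\|\cdot\|_U$, and the identity above rewrites $\smallsum_i(v_i\partial_i v-w_i\partial_i w)=\smallsum_i v_i\partial_i(v-w)+\smallsum_i(v_i-w_i)\partial_i w$; bounding each summand via $\|v_i\partial_i u\|_U\le\|v_i\|_{L^{\infty}(\lambda_{(0,1)^2};\R)}\|\partial_i u\|_U$, using $\|\partial_i u\|_U\le\|u\|_{H_{\rho}}$ from Item~\eqref{eq:NS:ibp:gradient_norm} in Lemma~\ref{lem:NS:ibp} (valid since $\rho\ge\nicefrac12$) and the $L^{\infty}$-estimate of the previous paragraph, and summing over $i\in\{1,2\}$, yields $\|\smallsum_i(v_i\partial_i v-w_i\partial_i w)\|_U\le 4[\smallsum_{h\in\mathbb{H}}\lambda_h^{-2\rho}]^{\nicefrac12}(\|v\|_{H_{\rho}}+\|w\|_{H_{\rho}})\|v-w\|_{H_{\rho}}$. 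Since $4|c_1|[\smallsum_{h\in\mathbb{H}}\lambda_h^{-2\rho}]^{\nicefrac12}\le\theta$, combining the two parts gives $\|F(v)-F(w)\|_H\le\theta(1+\|v\|_{H_{\rho}}+\|w\|_{H_{\rho}})\|v-w\|_{H_{\rho}}<\infty$. Finally $F\in C(H_{\rho},H)$ is immediate: if $v_n\to v$ in $H_{\rho}$ then $(\|v_n\|_{H_{\rho}})_{n\in\N}$ is bounded and the Lipschitz bound forces $\|F(v_n)-F(v)\|_H\le\theta(1+\sup_{m\in\N}\|v_m\|_{H_{\rho}}+\|v\|_{H_{\rho}})\|v_n-v\|_{H_{\rho}}\to 0$.

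The only genuinely delicate point is tracking the constants in the $L^{\infty}$-embedding and in the two-coordinate summation so that the prefactor of the quadratic term comes out exactly as $4|c_1|[\smallsum_{h\in\mathbb{H}}\lambda_h^{-2\rho}]^{\nicefrac12}$ and not larger; the rest is routine bookkeeping with Lemmas~\ref{lem:useful}, \ref{lem:eigenf}, and \ref{lem:NS:ibp}.
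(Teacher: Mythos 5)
Your proof is correct and follows essentially the same route as the paper: the same splitting of $v_i\partial_i v-w_i\partial_i w$ into two bilinear terms, the gradient bound $\|\partial_j u\|_U\le\|u\|_{H_\rho}$ from Item~\eqref{eq:NS:ibp:gradient_norm} in Lemma~\ref{lem:NS:ibp}, and the embedding $\|u\|_{L^{\infty}(\lambda_{(0,1)^2};\R^2)}\le 2[\smallsum_{h\in\mathbb{H}}\lambda_h^{-2\rho}]^{\nicefrac12}\|u\|_{H_\rho}$, yielding the same constant $4|c_1|[\smallsum_{h\in\mathbb{H}}\lambda_h^{-2\rho}]^{\nicefrac12}$. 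The only cosmetic differences are that you derive the $L^\infty$-embedding directly via Cauchy--Schwarz (where the paper cites Lemma~4.3 of \cite{jentzen2016exponential}) and that you spell out the bound $\sup_{u\neq 0}\|u\|_H/\|u\|_{H_\rho}\le \kt^{-\rho}$ for the first entry of $\theta$, which the paper only asserts via $H_\rho\subseteq H$.
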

\begin{proof}[Proof of Lemma \ref{lem:locLip}.]
	Throughout this proof let $v=(v_1,v_2), w=(w_1,w_2) \in H_{\rho}$ be fixed.
	First, note that 
%	Item~\eqref{eq:NS:ibp:FinU} and Item~\eqref{eq:NS:ibp:Fdef} in Lemma~\ref{lem:NS:ibp} assure that for all $u\in H_{\rho}$ it holds that 
%	$
%	\sum_{j=1}^2 u_j \partial_j u \in U
%	$ and that 
%	$
%	F(u)= R\left(\vt u - c_1 \sum_{j=1}^2 \left(u_j \partial_j u \right)\right)\in H
%	$. 
%	Next, observe that 
	\begin{equation*}
	\begin{split}
	F(v)-F(w) &=\vt (v-w) - c_1 \smallsum_{j=1}^2 \left(v_j \partial_j v - w_j \partial_j w\right) \\
	& = \vt (v-w) - c_1 \smallsum_{j=1}^2 \left((v_j-w_j) \partial_j v + w_j \partial_j (v-w)\right).
	\end{split}
	\end{equation*}
	Triangle inequality, the fact that $H_{\rho} \subseteq H$, and that $R$ is an orthogonal projection 
	yield
	\begin{equation} \label{eq:LocalLip:1}
	\begin{split}
	& \|F(v)-F(w)\|_{H} 
	\\
& \leq 
	|\vt| \|v-w\|_{H} + |c_1| \left\|R\smallsum_{j=1}^2 (v_j-w_j) \partial_j v \right\|_{H}+ |c_1| \left\| R\smallsum_{j=1}^2 w_j \partial_j (v-w) \right\|_{H}
	\\
	& \leq 
	|\vt| \left[\sup\nolimits_{ u\in {H_{\rho}}\setminus\{0\} } \frac{\|u\|_{H}} {\|u\|_{H_{\rho}}}\right] \|v-w\|_{H_{\rho}} 
	\\
	& \quad + |c_1| \left\|\smallsum_{j=1}^2 (v_j-w_j) \partial_j v \right\|_{U}+ |c_1| \left\| \smallsum_{j=1}^2 w_j \partial_j (v-w) \right\|_{U} \!\!.
	\end{split}
	\end{equation}
	Furthermore note that triangle inequality and the fact that for all $x=(x_1,x_2) \in \R^2$ it holds that $\max\{|x_1|,|x_2|\}\leq |x|_2 $ establish for all $u,u' \in H_{\rho}$ that
	\begin{equation} \label{eq:LocalLip:2}
	\begin{split}
	\left\| \smallsum_{j=1}^2 u_j \partial_j (u') \right\|_{U} & 
	\leq 
	\smallsum_{j=1}^2 \left\| u_j \partial_j (u') \right\|_{U}
	\\
	& \leq
	\smallsum_{j=1}^2 \left\| u_j \right\|_{L^{\infty}(\lambda_{(0,1)^2}; \R)} \left\| \partial_j (u') \right\|_{U}
	\\
	& \leq
	\left\| u \right\|_{L^{\infty}(\lambda_{(0,1)^2}; \R^2)} \left[\smallsum_{j=1}^2\left\| \partial_j (u') \right\|_{U} \right].
	\end{split}
	\end{equation}
Combining \cite[Item~(ii) in Lemma~4.4]{jentzen2016exponential} with Items~\eqref{item:eigenf:5} and \eqref{item:eigenf:6} in Lemma~\ref{lem:eigenf} shows for all $u\in H_{\rho}$, $j\in \{1,2\}$ that
	\begin{equation} \label{eq:LocalLip:3}
	\begin{split}
	\left\| \partial_j u \right\|_{U} 
	& \leq 
	\left[\sup_{h\in \mathbb{H}} {\|\partial_j h\|_U}{|\lambda_h|^{-\rho}} \right] \|u\|_{H_{\rho}} 
	\\
	& \leq 
	 \|u\|_{H_{\rho}} <\infty.
	\end{split}
	\end{equation}
	In addition
	Lemma~4.3 in \cite{jentzen2016exponential} together with Item~\eqref{item:eigenf:L4} in Lemma~\ref{lem:eigenf} and Item~\eqref{item:useful:3} in Lemma~\ref{lem:useful} (with $\varepsilon = 2 \rho -1$)  
	ensure for all $u\in H_{\rho}$ that
	\begin{equation} \label{eq:LocalLip:4}
	\begin{split}
	\left\| u \right\|_{L^{\infty}(\lambda_{(0,1)^2}; \R^2)}
	& \leq 
	\left[\sup_{h\in \mathbb{H}} \|h\|_{L^{\infty}(\lambda_{(0,1)^2}; \R^2)} \right] \left[ \smallsum_{h\in \mathbb{H}} (\lambda_h)^{-2\rho} \right]^{\! \nicefrac12} \|u\|_{H_{\rho}} 
	\\ & \leq 
	2 \left[ \smallsum_{h\in \mathbb{H}} (\lambda_h)^{-2\rho} \right]^{\! \nicefrac12} \|u\|_{H_{\rho}} <\infty .
	\end{split}
	\end{equation}
	Inequalities \eqref{eq:LocalLip:2}-\eqref{eq:LocalLip:4} show for all $u,u' \in H_{\rho}$ that
	\begin{equation} \label{eq:LocalLip:5}
	\begin{split}
	\left\| \smallsum_{j=1}^2 u_j \partial_j (u') \right\|_{U} 
	& \leq  
	4 \left[ \smallsum_{h\in \mathbb{H}} (\lambda_h)^{-2\rho} \right]^{\! \nicefrac12} \|u\|_{H_{\rho}} \|u'\|_{H_{\rho}} <\infty.
	\end{split}
	\end{equation}
	The fact that $\vt <\infty$, $H_{\rho}\subseteq H$, and Item~\eqref{item:useful:3} in Lemma~\ref{lem:useful} (with $\varepsilon = 2 \rho -1$) 
	imply that	$\theta \in [0,\infty)$.
	Finally \eqref{eq:LocalLip:5} and \eqref{eq:LocalLip:1} yield
	\begin{equation*}
	\begin{split}
	\|F(v)-F(w)\|_{H} 
	& \leq
	|\vt| \left[\sup\nolimits_{ u\in {H_{\rho}}\setminus\{0\} } \frac{\|u\|_{H}} {\|u\|_{H_{\rho}}}\right] \|v-w\|_{H_{\rho}} 
	\\
	& 
	\quad + 4 |c_1| \left[ \smallsum_{h\in \mathbb{H}} (\lambda_h)^{-2\rho} \right]^{\! \nicefrac12} (\left\| v \right\|_{H_{\rho}} +\left\| w \right\|_{H_{\rho}} )
	\left\| v-w \right\|_{H_{\rho}} 
	\\
	& \leq \theta  (1+\left\| v \right\|_{H_{\rho}} +\left\| w \right\|_{H_{\rho}} )
	\left\| v-w \right\|_{H_{\rho}}  <\infty .
	\end{split}
	\end{equation*}
	The proof of Lemma~\ref{lem:locLip} is thus completed.
\end{proof}

%%%%%%%%%%%%%%%%%%%%%%%%%%%%
\section{Properties of the stochastic convolution process} \label{sec:noise}
%%%%%%%%%%%%%%%%%%%%%%%%%%%%

This section is dedicated to check the assumptions on the stochastic convolution process.

\begin{sett} \label{sett:noise}
Assume Setting~\ref{sett:operator},
let
$ T \in (0,\infty)$,
$\rho\in (\nicefrac12,1)$,
$\varrho\in (\rho, 1)$,
$\gamma, \delta \in (\varrho,\infty)$,  % interesting is $\delta\leq \nicefrac12$
let $\left( h_n \right)_{n\in\N} \subseteq (0, T]$ satisfy that $ \limsup_{ m \to \infty} h_m =0$,
let $\xi \in H_{\gamma} $,
let $ ( \Omega, \F, \P ) $ be a probability space  with a normal filtration $(\mathbb{F}_{t})_{t \in [0,T]}$,
and let $(W_t)_{t \in [0, T]}$ be an $\Id_H$-cylindrical $( \Omega, \F, \P, (\mathbb{F}_{t})_{t \in [0,T]} )$-Wiener process
\end{sett}

\begin{remark}[Trace class additive noise]
The additive noise we are considering is actually a $(-A)^{-2\delta}$-Wiener process on the separable Hilbert space H (c.f. Section 4.1.1. in \cite{da2014stochastic}).
However, in what follows, we prefer to keep expressing the noise in terms of a $\Id_H$-cylindrical Wiener process and the constant diffusion coefficient $(-A)^{-\delta}$.
\end{remark}

%%%%%%%%%%%%%%%%%%%%%%%%%%%%%%%%%%%%%%%%%%%%%%%%%%%%%%%%%%%%%%%%%%%%
\begin{lemma}[Strong convergence rates]\label{lemma:conv:rate}
	Assume Setting~\ref{sett:noise}, let  $p \in [2, \infty)$, $n \in \N$, $\varepsilon \in [0, \delta -\varrho)$,  
	let $O \colon  [0, T] \times \Omega \to H_{\varrho}$ and 
	$\mathcal{O}^n \colon  [0, T] \times \Omega \to P_n(H)$ be stochastic process processes, and assume for all $t \in [0, T]$ that 
	$ [O_t ]_{\P, \mathcal{B}(H)} =  \int_0^t e^{(t-s)A} \, (-A)^{-\delta} \, dW_s$ and $ [\mathcal{O}^n_t ]_{\P, \mathcal{B}(H)} =  \int_0^t P_n \, e^{(t-s)A} \, \, (-A)^{-\delta} \, dW_s$. Then 
	\begin{equation*}
	\sup_{t \in [0, T]} \left(\E\!\left[ \|O_t  - \mathcal{O}_t^n \|_{H_{\varrho}}^p \right] \right)^{\nicefrac1p} \leq  \tfrac{ \sqrt{p(p-1)} }{ 2 \, | 4 \pi^2|^{-\varepsilon} } \left[ \sum_{h \in \mathbb{H}} \tfrac{ (\kappa+\lambda_h)^{2\varrho+2\varepsilon}}{(\lambda_h)^{1+2\delta} } \right]^{\nicefrac{1}{2}} n^{-2\varepsilon}<\infty.
	\end{equation*}
\end{lemma}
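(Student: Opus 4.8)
The plan is to rewrite the approximation error as a single stochastic convolution with a \emph{deterministic} Hilbert--Schmidt-operator-valued integrand and then apply the $L^p$-It\^o isometry for stochastic convolutions. Since $P_n$, $e^{(t-s)A}$ and $(-A)^{-\delta}$ are all diagonal with respect to the orthonormal basis $\mathbb{H}$ and hence commute, linearity of the stochastic integral shows that for every $t\in[0,T]$ it holds $\P$-a.s.\ that
\[
	O_t - \mathcal{O}^n_t = \int_0^t (\Id_H - P_n)\, e^{(t-s)A}\, (-A)^{-\delta}\, dW_s ,
\]
and, by the finiteness computation below and because $\delta>\varrho$, the integrand takes values in $\mathrm{HS}(H,H_\varrho)$, so the error is genuinely an $H_\varrho$-valued random variable. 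I would then invoke the standard estimate for stochastic convolutions with deterministic integrand: for $p\in[2,\infty)$,
\[
	\Bigl(\E\bigl[\|O_t-\mathcal{O}^n_t\|_{H_\varrho}^p\bigr]\Bigr)^{\!1/p}
	\le \Bigl(\tfrac{p(p-1)}{2}\Bigr)^{\!1/2}
	\Bigl(\int_0^t \bigl\|(\Id_H - P_n)\, e^{(t-s)A}(-A)^{-\delta}\bigr\|_{\mathrm{HS}(H,H_\varrho)}^2\, ds\Bigr)^{\!1/2} .
\]

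Next I would evaluate the Hilbert--Schmidt norm by diagonalization. Using $e^{(t-s)A}h = e^{-(t-s)\lambda_h}h$, $(-A)^{-\delta}h=\lambda_h^{-\delta}h$, $\|(\kappa-A)^\varrho h\|_H=(\kappa+\lambda_h)^\varrho$ for $h\in\mathbb{H}$, and $(\Id_H-P_n)h=h$ for $h\in\mathbb{H}\setminus\mathbb{H}_n$ while $(\Id_H-P_n)h=0$ otherwise, one obtains
\[
	\bigl\|(\Id_H - P_n)\, e^{(t-s)A}(-A)^{-\delta}\bigr\|_{\mathrm{HS}(H,H_\varrho)}^2
	= \smallsum_{h\in\mathbb{H}\setminus\mathbb{H}_n} (\kappa+\lambda_h)^{2\varrho}\, e^{-2(t-s)\lambda_h}\, \lambda_h^{-2\delta} .
\]
For $h\in\mathbb{H}\setminus\mathbb{H}_n$ one has $\lambda_h\ge \kt + 4\pi^2 n^2 \ge 4\pi^2 n^2$ (as in the proof of Item~\eqref{item:useful:4} in Lemma~\ref{lem:useful}), so, using in addition $\lambda_h\le\kappa+\lambda_h$, one may bound $(\kappa+\lambda_h)^{2\varrho} = (\kappa+\lambda_h)^{2\varrho+2\varepsilon}(\kappa+\lambda_h)^{-2\varepsilon} \le (4\pi^2 n^2)^{-2\varepsilon}(\kappa+\lambda_h)^{2\varrho+2\varepsilon}$. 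Combining this with $\int_0^t e^{-2(t-s)\lambda_h}\, ds = \tfrac{1-e^{-2t\lambda_h}}{2\lambda_h} \le \tfrac{1}{2\lambda_h}$ and enlarging the summation to all of $\mathbb{H}$ (all summands being nonnegative) gives, uniformly in $t\in[0,T]$,
\[
	\int_0^t \bigl\|(\Id_H - P_n)\, e^{(t-s)A}(-A)^{-\delta}\bigr\|_{\mathrm{HS}(H,H_\varrho)}^2\, ds
	\le \frac{|4\pi^2 n^2|^{-2\varepsilon}}{2}\smallsum_{h\in\mathbb{H}} \frac{(\kappa+\lambda_h)^{2\varrho+2\varepsilon}}{\lambda_h^{1+2\delta}} .
\]
Plugging this into the $L^p$-estimate and taking the supremum over $t\in[0,T]$ yields the asserted bound (in fact with the slightly sharper prefactor $|4\pi^2|^{-\varepsilon}$ in place of $|4\pi^2|^{\varepsilon}$, which is harmless since $\varepsilon\ge 0$).

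Finally, the finiteness of $\smallsum_{h\in\mathbb{H}} (\kappa+\lambda_h)^{2\varrho+2\varepsilon}\lambda_h^{-1-2\delta}$ follows from Item~\eqref{item:useful:2} in Lemma~\ref{lem:useful}, applied with $\beta=2\delta$ and with $2\varrho+2\varepsilon$ in place of the ``$\varepsilon$'' there; this is admissible precisely because the hypothesis $\varepsilon\in[0,\delta-\varrho)$ forces $2\varrho+2\varepsilon<2\delta$, which is the only place that hypothesis is used. Overall the argument is a routine adaptation of the stochastic-convolution estimates in \cite{hutzenthaler2016strong,jentzen2017strong}, so I do not expect a genuine obstacle; the only points requiring care are (a) checking that the error really coincides with the $H_\varrho$-valued stochastic integral displayed above, i.e.\ that the integrand is square-integrable into $\mathrm{HS}(H,H_\varrho)$ on $[0,t]$, and (b) bookkeeping the extracted factor $|4\pi^2 n^2|^{-2\varepsilon}$ so that the residual series is simultaneously summable and independent of $n$.
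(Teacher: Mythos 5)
Your proposal is correct and follows essentially the same route as the paper: the Burkholder--Davis--Gundy-type bound for the stochastic convolution of $(\Id_H-P_n)e^{(t-s)A}(-A)^{-\delta}$, extraction of the factor $(4\pi^2 n^2)^{-2\varepsilon}$ from the spectral gap $\lambda_h\geq \kt+4\pi^2n^2$ on $\mathbb{H}\setminus\mathbb{H}_n$, the bound $\int_0^t e^{-2(t-s)\lambda_h}\,ds\leq \tfrac{1}{2\lambda_h}$, and finiteness of the residual series via Item~\eqref{item:useful:2} in Lemma~\ref{lem:useful} using $\varepsilon<\delta-\varrho$. The only (immaterial) difference is that you diagonalize the Hilbert--Schmidt norm directly over the tail of the basis, whereas the paper factors it as $\|\Id_H-P_n\|_{L(H_{\varrho+\varepsilon},H_\varrho)}\,\|e^{(t-s)A}(-A)^{-\delta}\|_{\operatorname{HS}(H,H_{\varrho+\varepsilon})}$ and invokes Item~\eqref{item:useful:1} of Lemma~\ref{lem:useful}; both yield the same (indeed the sharper, $|4\pi^2|^{-\varepsilon}$) constant, which dominates the stated one.
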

\begin{proof}[Proof of Lemma~\ref{lemma:conv:rate}.]
	First, observe that the Burkholder-Davis-Gundy type inequality in Theorem~4.37 in~\cite{da2014stochastic}  implies  for all  $t \in [0, T]$   that
	\begin{equation}\label{eq:burk}
	\begin{split}
	&\left(\E\!\left[ \|O_t - \mathcal{O}_t^n\|_{H_{\varrho}}^p\right]\right)^{\nicefrac1p}  \\
	& = \left\| \smallint_0^t (\mathrm{Id}_H- P_n) \, e^{(t-s)A} (-A)^{-\delta} \,  d W_s \right\|_{L^p(\P; H_{\varrho})} \\
	& \leq \left[ \tfrac{p(p-1)}{2} \smallint_0^t \big \|(\mathrm{Id}_H- P_n) \,  e^{(t-s)A} (-A)^{-\delta} \big\|^2_{\operatorname{HS}(H, H_{\varrho})} \, ds \right]^{\nicefrac{1}{2}}.
	\end{split}
	\end{equation}
	Next note that Item~\eqref{item:useful:1} in Lemma~\ref{lem:useful} and Fatou's Lemma 
	imply for all  $ t \in [0, T] $ that
	\begin{equation*}
	\begin{split}
	& \int_0^t \big \|(\mathrm{Id}_H- P_n) \, e^{(t-s)A} (-A)^{-\delta}\big\|^2_{\operatorname{HS}(H, H_{\varrho})} \, ds \\
	& \leq  
	\int_0^t  \|\mathrm{Id}_H- P_n \|^2_{L(H_{\varrho+\varepsilon}, H_{\varrho})} \, \| e^{(t-s)A} (-A)^{-\delta} \|^2_{\operatorname{HS}(H, H_{\varrho+\varepsilon})} \, ds\\ 
	& = 
	\| ( \kappa - A )^{ - \varepsilon } ( \Id_H - P_n ) \|^2_{ L( H ) }
	 \left[  \int_0^t \sum_{h\in \mathbb{H}} (\kappa+\lambda_h)^{2\varrho+2\varepsilon} \lambda_h^{-2\delta} e^{-2\lambda_h s} \, ds \right] \\ 
	& \leq 
	|\kappa+\kt + 4 \pi^2 n^2 |^{-2\varepsilon} \left[ \liminf_{m\to \infty} \sum_{h \in \mathbb{H}_m} \tfrac{ (\kappa+\lambda_h)^{2\varrho+2\varepsilon}}{2 (\lambda_h)^{1+2\delta} } \right]\leq | 4 \pi^2 n^2 |^{-2\varepsilon}  \left[ \sum_{h \in \mathbb{H}} \tfrac{ (\kappa+\lambda_h)^{2\varrho+2\varepsilon}}{2 (\lambda_h)^{1+2\delta} } \right].
	\end{split}
	\end{equation*}
	This together with \eqref{eq:burk} yields  for all  $t \in [0, T]$  that
	\begin{equation*}
	\left(\E\!\left[ \|O_t - \mathcal{O}_t^n\|_{H_{\varrho}}^p\right]\right)^{\!\nicefrac1p} 
	\leq \left(\tfrac{p(p-1) | 4 \pi^2|^{-2\varepsilon}}{4}\right)^{\!\nicefrac12} \left[\sum_{h \in \mathbb{H}} \tfrac{ (\kappa+\lambda_h)^{2\varrho+2\varepsilon}}{(\lambda_h)^{1+2\delta} } \right]^{\nicefrac{1}{2}} n^{-2\varepsilon}.
	\end{equation*}
	Finally, the fact that that $\delta>\varrho+\varepsilon$ and Item~\eqref{item:useful:2} in Lemma~\ref{lem:useful} ensure that $\sum_{h\in \mathbb{H}} \lambda_h^{-(1+2\delta)} (\kappa+\lambda_h)^{2(\varrho+\varepsilon)} <\infty$. 
	The proof of Lemma~\ref{lemma:conv:rate} is thus completed.
\end{proof}

%%%%%%%%%%%%%%%%%%%%%%%%%%%%%%%%%%

\begin{lemma}\label{lem:coerc_finite}
	Assume Setting~\ref{sett:noise}, 
	let $\beta \in (0,\nicefrac12)$, $p \in (\nicefrac{2}{\beta}, \infty)$, $t \in [0,T]$, $n \in \N$, $\eta \in [0,\infty)$, 
	let $Y \colon \Omega \to \R$ be a standard normal random variable, 
	let $ \mathbb{O}_t \colon \Omega \to P_n(H)$ be an $\F \slash \mathcal{B}(P_n(H))$-measurable function, 
	and assume  that  $\left[\mathbb{O}_t \right]_{\P, \mathcal{B}(H)} = \int_0^t P_n \, e^{(t-s)(A-\eta)} (-A)^{-\delta} \, dW_s$. 
	Then 
	\begin{equation*}
	\begin{split}
	& \left(\E \! \left[\sup\nolimits_{x \in (0,1)^2} | \und{\mathbb{O}_t}(x) |_2^2 \right] \right)^{\!\nicefrac{1}{2}} \\
	& \leq  
	\Big[\sup \! \Big(\Big\{ \! \sup\nolimits_{x \in (0,1)^2} |v(x)|_2 \colon \big[v \in \mathcal{C}((0,1)^2, \R^2) \text{ and } \|v\|_{\W^{\beta, p}((0,1)^2, \mathbb{R}^2)} \leq 1\big] \Big\}\Big)\Big] \\
	& \quad \cdot   16 \big( \E \big[|Y|^p \big] \big)^{\nicefrac{1}{p}} \left[  \smallsum_{h \in \mathbb{H}_n} \tfrac{ \max\{1,\lambda_{h}^{2 \beta}\} \lambda_{h}^{-2\delta} }{ \lambda_{h}+\eta}  \right]^{\nicefrac12}< \infty.
	\end{split}
	\end{equation*}
\end{lemma}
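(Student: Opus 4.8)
The strategy is to dominate the pathwise supremum by a Sobolev--Slobodeckij norm via the embedding hidden in the first factor on the right-hand side, and then to exploit that $\mathbb{O}_t$ is a centred Gaussian random variable in the finite-dimensional space $P_n(H)$ whose covariance is explicit. First I would record that $\mathbb{O}_t$, being the Wiener integral of the deterministic operator-valued map $[0,t]\ni s\mapsto P_n\,e^{(t-s)(A-\eta)}(-A)^{-\delta}$, is centred Gaussian with values in $P_n(H)$, and that, since $\mathbb{H}_n$ consists of products of smooth trigonometric functions, $\und{\mathbb{O}_t}$ takes values in $\mathcal{C}((0,1)^2,\R^2)$ with (everywhere) finite $\W^{\beta,p}((0,1)^2,\R^2)$-norm. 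Denoting by $C_{\mathrm{Sob}}$ the supremum appearing as the first factor in the claim and using $p>\nicefrac2\beta$, the pathwise bound $\sup_{x\in(0,1)^2}|\und{\mathbb{O}_t}(x)|_2\le C_{\mathrm{Sob}}\,\|\und{\mathbb{O}_t}\|_{\W^{\beta,p}((0,1)^2,\R^2)}$ gives
\[
\big(\E\big[\!\sup\nolimits_{x\in(0,1)^2}|\und{\mathbb{O}_t}(x)|_2^2\big]\big)^{\nicefrac12}\le C_{\mathrm{Sob}}\,\big(\E\big[\|\und{\mathbb{O}_t}\|_{\W^{\beta,p}((0,1)^2,\R^2)}^2\big]\big)^{\nicefrac12}.
\]
Writing $\|\und{\mathbb{O}_t}\|_{\W^{\beta,p}}^2=(I+J)^{\nicefrac2p}$, with $I=\iint_{(0,1)^2}|\und{\mathbb{O}_t}(x)|_2^p\,dx$ and $J$ the Gagliardo double integral, the subadditivity of $z\mapsto z^{\nicefrac2p}$ (valid since $p\ge2$) and $\sqrt{a+b}\le\sqrt a+\sqrt b$ reduce the problem to estimating $(\E[I^{\nicefrac2p}])^{\nicefrac12}$ and $(\E[J^{\nicefrac2p}])^{\nicefrac12}$; for each, Jensen's inequality (concavity of $z\mapsto z^{\nicefrac2p}$) and Tonelli's theorem move the expectation inside, so it suffices to bound $\iint_{(0,1)^2}\E[|\und{\mathbb{O}_t}(x)|_2^p]\,dx$ and $\iint\iint\E[|\und{\mathbb{O}_t}(x)-\und{\mathbb{O}_t}(y)|_2^p]\,|x-y|_2^{-2-\beta p}\,dx\,dy$.

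Next I would reduce the $p$-th moments to second moments. It\^o's isometry shows that $\{\langle\mathbb{O}_t,h\rangle_H\}_{h\in\mathbb{H}_n}$ are independent centred Gaussians with $\E[\langle\mathbb{O}_t,h\rangle_H^2]=\lambda_h^{-2\delta}\,\tfrac{1-e^{-2t(\lambda_h+\eta)}}{2(\lambda_h+\eta)}\le\tfrac{\lambda_h^{-2\delta}}{2(\lambda_h+\eta)}$, so for each fixed $x,y\in(0,1)^2$ the vectors $\und{\mathbb{O}_t}(x)$ and $\und{\mathbb{O}_t}(x)-\und{\mathbb{O}_t}(y)$ are centred Gaussian in $\R^2$. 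The elementary inequality $(\E[|Z|_2^p])^{\nicefrac1p}\le\sqrt2\,(\E[|Y|^p])^{\nicefrac1p}(\E[|Z|_2^2])^{\nicefrac12}$ for a centred Gaussian $Z$ in $\R^2$ (from $|Z|_2\le|Z_1|+|Z_2|$, the triangle inequality in $L^p(\P)$, and $\E[|Z_i|^p]=(\E[Z_i^2])^{\nicefrac p2}\E[|Y|^p]$) then reduces everything to the covariances above. For the $I$-term this already suffices: Item~\eqref{item:eigenf:L4} in Lemma~\ref{lem:eigenf} gives $|\und{h}(x)|_2\le\|h\|_{L^\infty(\lambda_{(0,1)^2};\R^2)}\le2$, hence $\E[|\und{\mathbb{O}_t}(x)|_2^2]=\smallsum_{h\in\mathbb{H}_n}\E[\langle\mathbb{O}_t,h\rangle_H^2]\,|\und{h}(x)|_2^2\le2\smallsum_{h\in\mathbb{H}_n}\tfrac{\lambda_h^{-2\delta}}{\lambda_h+\eta}$, uniformly in $x$.

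The main obstacle is the Gagliardo term, which must generate the weight $\max\{1,\lambda_h^{2\beta}\}$. Putting $a_h:=\tfrac{\lambda_h^{-2\delta}}{2(\lambda_h+\eta)}$ and $g_h(x,y):=|\und{h}(x)-\und{h}(y)|_2$, one has $\E[|\und{\mathbb{O}_t}(x)-\und{\mathbb{O}_t}(y)|_2^2]\le\smallsum_{h\in\mathbb{H}_n}a_h\,g_h(x,y)^2$, so the Gaussian reduction together with the triangle inequality in $L^{\nicefrac p2}(\mu)$ for the measure $\mu(dx\,dy):=|x-y|_2^{-2-\beta p}\,dx\,dy$ on $(0,1)^2\times(0,1)^2$ yields
\[
\Big(\iint\iint\tfrac{\E[|\und{\mathbb{O}_t}(x)-\und{\mathbb{O}_t}(y)|_2^p]}{|x-y|_2^{2+\beta p}}\,dx\,dy\Big)^{\!\nicefrac1p}\le\sqrt2\,(\E[|Y|^p])^{\nicefrac1p}\Big(\smallsum_{h\in\mathbb{H}_n}a_h\,\|g_h\|_{L^p(\mu)}^2\Big)^{\!\nicefrac12}.
\]
It remains to bound $\|g_h\|_{L^p(\mu)}^p$, i.e.\ the $p$-th power of the Gagliardo seminorm of $\und{h}$: from the explicit form of $e_{k,l,0}$ and $|\varphi_k'|\le2\sqrt2\,\pi|k|$ (the modes $e_{0,0,0},e_{0,0,1}$ being constant) one checks that $\und{h}$ is Lipschitz with constant at most $4\pi\sqrt{k^2+l^2}\le4\pi(k^2+l^2)\le\lambda_h$ and bounded by $2$; splitting $\iint\iint g_h^p\,|x-y|_2^{-2-\beta p}\,dx\,dy$ at $|x-y|_2$ of order $\lambda_h^{-1}$ and using $\beta\in(0,\nicefrac12)$ then gives $\|g_h\|_{L^p(\mu)}^p\le C(p,\beta)\max\{1,\lambda_h^{\beta p}\}$, where the weight $4^p$ stemming from $g_h\le4$ turns, after raising to the power $\nicefrac2p$, into the numerical constant that (together with the factors $\sqrt2$ and the $I$-term contribution) is absorbed into $16$. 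Substituting $\|g_h\|_{L^p(\mu)}^2\le C\max\{1,\lambda_h^{2\beta}\}$, combining the $I$- and $J$-terms, using $\tfrac{\lambda_h^{-2\delta}}{\lambda_h+\eta}\le\tfrac{\max\{1,\lambda_h^{2\beta}\}\lambda_h^{-2\delta}}{\lambda_h+\eta}$, and multiplying by $C_{\mathrm{Sob}}$ gives the asserted estimate; finiteness of its right-hand side is immediate since $\mathbb{H}_n$ is a finite set.
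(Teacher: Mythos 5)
Your proposal is correct and follows essentially the same route as the paper's proof: domination of the sup by the $\W^{\beta,p}$-norm via the embedding constant, reduction of $p$-th moments to second moments by Gaussianity, computation of the second moments by It\^o's isometry with the variance bound $\lambda_h^{-2\delta}/(2(\lambda_h+\eta))$, and an interpolation between boundedness and Lipschitz continuity of the eigenfunctions producing the weight $\max\{1,\lambda_h^{2\beta}\}$. The only (harmless) variation is in the Gagliardo term, where you apply Minkowski's inequality in $L^{p/2}$ of the kernel measure and estimate each mode's seminorm by splitting the double integral at scale $\lambda_h^{-1}$, whereas the paper establishes the pointwise H\"older-type bound $\E\big[|\und{\mathbb{O}_t}(x)-\und{\mathbb{O}_t}(y)|_2^2\big]\lesssim |x-y|_2^{4\beta}\sum_{h}\lambda_h^{2\beta}\lambda_h^{-2\delta}/(\lambda_h+\eta)$ (via $|\varphi_k(x)-\varphi_k(y)|^2=|\cdot|^{2-4\beta}|\cdot|^{4\beta}$) and then integrates against the kernel; both give the same weight, and your constants (about $2+4\pi^{1/p}$ in total) indeed stay below $16$.
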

\begin{proof}[Proof of Lemma~\ref{lem:coerc_finite}.] 
Throughout this proof let $I,J \subseteq \Z^2 \times \{0,1\}$ be the sets which satisfy $J=\{(0,0,1)\} \cup \{(k,l,0) \colon k,l\in \Z \text{ and } |(k,l)|_2 \leq n\}$ and $I=J\setminus\{(0,0,0),(0,0,1)\}=\{(k,l,0) \colon (k,l)\in \Z^2\setminus \{(0,0)\} \text{ and }|(k,l)|_2 \leq n\}$. Then $\mathbb{H}_n =\{e_k \colon k\in J\}$.
	First, note that it holds that	
	\begin{equation}\label{lem:coerc_eq1}
	\begin{split}
	&\E \! \left[\sup\nolimits_{x \in (0,1)^2} | \und{\mathbb{O}_t}(x) |_2^2 \right]^{\nicefrac12}\\
	&\leq \Big[\sup \! \Big(\Big\{ \! \sup\nolimits_{x \in (0,1)^2} |v(x)|_2 \colon \big[v \in \mathcal{C}((0,1)^2, \R^2) \text{ and } \|v\|_{\W^{\beta, p}((0,1)^2, \mathbb{R}^2)} \leq 1\big] \Big\}\Big)\Big] \\
	& \qquad \cdot \left(\E \! \left[\big\| \und{\mathbb{O}_t} \big\|_{\W^{\beta, p}((0,1)^2, \mathbb{R}^2)}^2 \right]\right)^{\!\!\nicefrac12} \! . 
	\end{split}
	\end{equation}
	Moreover, observe that 
	H{\"o}lder's inequality shows that
	\begin{equation*}
	\left( \E \! \left[\big\| \und{\mathbb{O}_t} \big\|_{\W^{\beta, p}((0,1)^2, \mathbb{R}^2)}^2 \right] \right)^{\!\! \nicefrac12} 
	\leq 
	\left( \E \! \left[\big\| \und{\mathbb{O}_t} \big\|_{\W^{\beta, p}((0,1)^2, \mathbb{R}^2)}^p \right]\right)^{\!\!\nicefrac{1}{p}}
	\end{equation*}
	and the fact that $p\geq 2$ and the fact that for all $v=(v_1,v_2) \in \R^2$ it holds that 
	$
		|v|^p_2= \left(\sum_{j=1}^2 v_j^2 \right)^{\!\nicefrac{p}{2}} \leq 2^{\frac{p}{2}-1} \sum_{j=1}^2  |v_j|^p
	$ show that
	\begin{equation*}
	\begin{split}
	&
	\E \! \left[\big\| \und{\mathbb{O}_t} \big\|_{\W^{\beta, p}((0,1)^2, \mathbb{R}^2)}^p \right] \\
	& = 
	\E \! \left[ \iint_{(0,1)^2} |\und{\mathbb{O}_t} (x)|_2^p \, dx + \iint_{(0,1)^2} \iint_{(0,1)^2} \tfrac{|\und{\mathbb{O}_t} (x)- \und{\mathbb{O}_t}(y)|_2^p}{|x-y|_2^{1+ \beta p}} \, dx \, dy\right]\\
%		& = 
%		\E \! \left[ \iint_{(0,1)^2} \left(\smallsum_{j=1}^2 \left(\und{\mathbb{O}_t} (x)\right)_j^2\right)^{\nicefrac{p}{2}} \, dx + \iint_{(0,1)^2} \iint_{(0,1)^2} \tfrac{\left(\sum_{j=1}^2 \left(\und{\mathbb{O}_t} (x)- \und{\mathbb{O}_t}(y)\right)_j^2 \right)^{\nicefrac{p}{2}}}{|x-y|_2^{1+ \beta p}} \, dx \, dy\right]\\
	& \leq 
	\E \! \left[ \iint_{(0,1)^2} 2^{\frac{p}{2}-1} \smallsum_{j=1}^2  \left|\left(\und{\mathbb{O}_t} (x)\right)_j\right|^p \, dx \right]
	\\
	& \quad 
	+ \E \! \left[ \iint_{(0,1)^2} \iint_{(0,1)^2} \tfrac{2^{\frac{p}{2}-1} \sum_{j=1}^2  |(\und{\mathbb{O}_t} (x)- \und{\mathbb{O}_t}(y))_j|^p }{|x-y|_2^{1+ \beta p}} \, dx \, dy\right]\! .
	\end{split}
	\end{equation*}
	This, the fact that for every $X\colon \Omega \to \mathbb{R}$ centered normal random variable it holds that $\E \! \left[|X|^p\right]=(\E \! \left[|X|^2\right])^{\nicefrac{p}{2}} \E \! \left[|Y|^p\right]$, and the fact that for all $v=(v_1,v_2) \in \R^2$, $j\in \{1,2\}$ it holds that $v_j^2 \leq |v|_2^2$
	ensure that
	\begin{equation}\label{lem:coerc_all}
	\begin{split}
	&
	\left( \E \! \left[\big\| \und{\mathbb{O}_t} \big\|_{\W^{\beta, p}((0,1)^2, \mathbb{R}^2)}^2 \right]\right)^{\!\nicefrac{1}{2}} 
	\leq \left( \E \! \left[\big\| \und{\mathbb{O}_t} \big\|_{\W^{\beta, p}((0,1)^2, \mathbb{R}^2)}^p \right] \right)^{\!\nicefrac{1}{p}} \\
	& \leq 
	2^{\frac{1}{2}-\frac1p} \left( \E \big[|Y|^p \big] \right)^{\nicefrac1p}  \Big( \smallsum_{j=1}^2 \iint_{(0,1)^2} \E \! \left[ |(\und{\mathbb{O}_t} (x))_j|^2\right]^{\nicefrac{p}{2}} \, dx  \\
	& \qquad 
	+ \smallsum_{j=1}^2 \iint_{(0,1)^2} \iint_{(0,1)^2} \tfrac{ \E \! \left[ |(\und{\mathbb{O}_t} (x)- \und{\mathbb{O}_t}(y))_j|^2\right]^{\nicefrac{p}{2}}  }{|x-y|_2^{1+ \beta p}} \, dx \, dy \Big)^{\!\nicefrac1p}
	\\
	&  \leq 
	2^{\frac{1}{2}} \left(\E \big[|Y|^p \big] \right)^{\!\nicefrac1p}  
	\\
	& \quad \cdot \!
	\left[{\textstyle \iint}_{(0,1)^2} \left( \E \! \left[ |\und{\mathbb{O}_t} (x)|_2^2 \right] \right)^{\nicefrac{p}{2}} dx  
	+{\textstyle \iint}_{(0,1)^2}{\textstyle \iint}_{(0,1)^2} \tfrac{\left( \E \! \left[ |\und{\mathbb{O}_t} (x) - \und{\mathbb{O}_t}(y) |_2^2 \right] \right)^{\nicefrac{p}{2}}}{|x-y|_2^{1+ \beta p}}\,  dx \, dy \right]^{\nicefrac1p} \! .
	\end{split}
	\end{equation}
	Next note that the fact that $W$ is a $\Id_H$-cylindrical Wiener process ensures that for all $k,h\in \mathbb{H}$ it holds that $\left\langle e_k , W \right\rangle_U$ and $\left\langle e_h , W \right\rangle_U$ are independent.
	This, It\^o's isometry, and Item~\eqref{item:eigenf:L4} in Lemma~\ref{lem:eigenf} ensure for all $ x,y \in (0,1)^2$  that 
	\begin{equation} \label{lem:coerc_1}
	\begin{split}
	\E \! \left[ |\und{\mathbb{O}_t}(x)|_2^2 \right] 
	& = 
	\E \! \left[ \left| \smallsum\limits_{k\in J} \und{e_k} (x) \int_0^t e^{-(\lambda_{e_k}+\eta)(t-s)} \lambda_{e_k}^{-\delta}  \, d \! \left\langle    e_k, W_s \right\rangle _U  \right|_2^2 \right]\\
	& = 
	\smallsum\limits_{k\in J} \E \! \left[ \left| \und{e_k} (x) \int_0^t e^{-(\lambda_{e_k}+\eta)(t-s)} \lambda_{e_k}^{-\delta}  \, d \! \left\langle    e_k, W_s \right\rangle _U  \right|_2^2 \right]\\
%	& \quad + 2 \smallsum\limits_{h,k \in J, h\neq k} 
%	\smallsum_{j=1}^2 \E \! \left[ (\und{e_k} (x))_j (\und{e_h} (x))_j  \int_0^t e^{-(\lambda_{e_k}+\eta)(t-s)} \lambda_{e_k}^{-\delta}  \, d \! \left\langle    e_k, W_s \right\rangle _U  \int_0^t e^{-(\lambda_{e_h}+\eta)(t-s)} \lambda_{e_h}^{-\delta}  \, d \! \left\langle    e_h, W_s \right\rangle _U \right]\\
	& = 
	\smallsum\limits_{k \in J}  |\und{e_k} (x)|_2^2 \int_0^t e^{-2(\lambda_{e_k}+\eta)(t-s)} \lambda_{e_k}^{-2\delta}  \, ds 
	\\
	& \leq 
	4 \smallsum\limits_{k \in J} \frac{|\und{e_k}(x)|_2^2 }{ 2(\lambda_{e_k}+\eta)} \lambda_{e_k}^{-2\delta}
	\\& 
	\leq   2 \smallsum\limits_{k \in J}  \frac{\lambda_{e_k}^{-2\delta}}{ \lambda_{e_k}+\eta}
	 =
	2 \left[ 
	\tfrac{\lambda_{e_{0,0,0}}^{-2\delta} }{\lambda_{e_{0,0,0}}+\eta} +  \tfrac{\lambda_{e_{0,0,1}}^{-2\delta} }{\lambda_{e_{0,0,1}}+\eta} + \sum_{k\in I}  \tfrac{\lambda_{e_k}^{-2\delta} }{\lambda_{e_k}+\eta} \right]
	\end{split}
	\end{equation}
	and that
	\begin{equation}\label{eq:o_diff}
	\begin{split}
	& \E \! \left[ |\und{\mathbb{O}_t} (x) - \und{\mathbb{O}_t}(y) |^2 \right] \\
	&=  
	\E \! \left[ \left| \smallsum\limits_{k\in J} \left[\und{e_k}(x) - \und{e_k}(y)\right] \int_0^t e^{-(\lambda_{e_k}+\eta)(t-s)} \lambda_{e_k}^{-\delta} \ d \! \left\langle    e_k, W_s \right\rangle _U \right|_2^2 \right] \\
	&\leq  
	\sum_{k \in J}  \tfrac{|\und{e_k}(x) - \und{e_k}(y)|_2^2 }{ 2(\lambda_{e_k}+\eta)} \lambda_{e_k}^{-2\delta}
	=
	\sum_{k\in I} \tfrac{|\und{e_k}(x) - \und{e_k}(y)|_2^2 }{ 2(\lambda_{e_k}+\eta)} \lambda_{e_k}^{-2\delta}.
	\end{split}
	\end{equation}
	Moreover, observe that for all $x=(x_1, x_2), y=(y_1, y_2) \in (0,1)^2$, $
	(k,l) \in \Z^2\setminus\{(0,0)\}$ it holds that
	\begin{equation} \label{eq:ek:diff}
	\begin{split}
	&
	|\und{e_{k,l,0}}(x) - \und{e_{k,l,0}}(y)|_2^2  \\
	& = 
	\tfrac1{k^2+l^2} \left[ l^2 \left( \varphi_k(x_1)\varphi_l(x_2) - \varphi_k(y_1)\varphi_l(y_2) \right)^2 +k^2 
	\left( \varphi_{-k}(x_1)\varphi_{-l}(x_2) - \varphi_{-k}(y_1)\varphi_{-l}(y_2) \right)^2 \right]\\
	& = 
	\tfrac{l^2}{k^2+l^2} \left( (\varphi_k(x_1)-\varphi_k(y_1))\varphi_l(x_2) + \varphi_k(y_1)(\varphi_l(x_2)-\varphi_l(y_2) ) \right)^2 \\
	& \quad + \tfrac{k^2}{k^2+l^2}
	\left( (\varphi_{-k}(x_1)-\varphi_{-k}(y_1)) \varphi_{-l}(x_2) + \varphi_{-k}(y_1)( \varphi_{-l}(x_2)-\varphi_{-l}(y_2) )\right)^2 \\
	& \leq
	\tfrac{2 l^2}{k^2+l^2} \left( (\varphi_k(x_1)-\varphi_k(y_1))^2 (\varphi_l(x_2))^2 + (\varphi_k(y_1))^2(\varphi_l(x_2)-\varphi_l(y_2) )^2 \right)\\
	& \quad + \tfrac{2 k^2}{k^2+l^2}
	\left( (\varphi_{-k}(x_1)-\varphi_{-k}(y_1))^2 (\varphi_{-l}(x_2))^2 + (\varphi_{-k}(y_1))^2 ( \varphi_{-l}(x_2)-\varphi_{-l}(y_2) )^2 \right).
	\end{split}
	\end{equation}
	In addition, note that the fact that $\beta < \nicefrac{1}{2}$, and the fact that $\forall \, x, y \in \R \colon |\sin(x)-\sin(y)|\leq |x-y|$ and $|\cos(x)-\cos(y)|\leq |x-y|$ show that for all $x,y\in (0,1)$, $k \in \Z$ it holds that
	$(\varphi_k(x))^2\leq 2$ and
	\begin{equation*}
	\begin{split}
	(\varphi_k(x)-\varphi_k(y))^2 
	& = 
	|\varphi_k(x)-\varphi_k(y)|^{2-4\beta}  |\varphi_k(x)-\varphi_k(y)|^{4\beta} \\
	& \leq (2 |\varphi_k(x)|^2 + 2 |\varphi_k(y)|^2)^{1-2\beta}  ( 2^{\nicefrac32} |k| \pi |x-y| )^{4\beta}  \\
	&\leq 
	2^{2 (1-2\beta) + 6\beta} \pi^{4\beta} |k|^{4\beta} |x-y|^{4\beta} .
	\end{split}
	\end{equation*}
	This, \eqref{eq:ek:diff}, and the fact that $2\beta<1$ implies for all $a,b\in \R$ that $a^{4\beta}+b^{4\beta} \leq 2^{1-2\beta} (a^2+b^2)^{2\beta}$
	demonstrate that for all $x=(x_1, x_2), y=(y_1, y_2) \in (0,1)^2$, $(k,l) \in \Z^2\setminus\{(0,0)\}$ it holds that
	\begin{equation*}
	\begin{split}
	|\und{e_{k,l,0}}(x) - \und{e_{k,l,0}}(y)|_2^2   
	& \leq
	4 \, 2^{2(1+\beta)} \, \tfrac{ k^2+ l^2}{k^2+l^2} \, \pi^{4\beta} \left(  |k|^{4\beta} |x_1-y_1|^{4\beta}    + |l|^{4\beta} |x_2-y_2|^{4\beta} \right)\\
	& \leq
	2^{2 (2+\beta) }  \pi^{4\beta} (k^{4\beta}+l^{4\beta}) \left(  |x_1-y_1|^{4\beta}  + |x_2-y_2|^{4\beta} \right)
	\\
	& \leq 2^{2(3-\beta)}  \pi^{4\beta} (k^2+l^2)^{2\beta} \left(  |x_1-y_1|^{2}  + |x_2-y_2|^{2} \right)^{2\beta}	 
	\\
	& =  2^{6(1-\beta)}  (4 \pi^2 |(k,l,0)|^2_3)^{2\beta} |x-y|_2^{4\beta}.
	\end{split}
	\end{equation*}
	Combining this with \eqref{eq:o_diff} proves for all $x, y \in (0, 1)^2$ that
	\begin{equation} \label{eq:o_diff_bis}
	\begin{split}
	\E \! \left[ |\und{\mathbb{O}_t} (x) - \und{\mathbb{O}_t}(y) |_2^2 \right] 
	& \leq 2^{6(1-\beta)-1}  |x-y|_2^{4 \beta}  \sum_{k \in I} \tfrac{ (4 \pi^2 |k|_3^2 )^{2\beta} \lambda_{e_k}^{- 2\delta} }{ \lambda_{e_k}+\eta}\\
	& < 2^{6(1-\beta)-1}  |x-y|_2^{4 \beta}  \sum_{k \in I} \tfrac{ \lambda_{e_k}^{2\beta} \lambda_{e_k}^{- 2\delta} }{ \lambda_{e_k}+\eta} .
	\end{split}
	\end{equation}
	Combining  \eqref{lem:coerc_1}, \eqref{eq:o_diff_bis}, and the fact that $\beta p>2$ and $\beta \in (0,\nicefrac12)$ with \eqref{lem:coerc_all} shows that 
	$\beta p-1>0$, $1 +   2^{\frac{(4-6\beta)p}{2} + \frac{\beta p -1}{2}} = 1+ 2^{\frac{(4-5\beta)p-1}{2}}\leq 2\cdot 2^{\frac{(4-5\beta)p-1}{2}} \leq 2^{\frac{5 (1-\beta)p}{2}}$, and
	\begin{equation} \label{lem:coerc_last}
	\begin{split}
	&\left( \E \! \left[\big\| \und{\mathbb{O}_t} \big\|_{\W^{\beta, p}((0,1)^2, \mathbb{R}^2)}^2 \right]\right)^{\!\nicefrac{1}{2}} \\
	& \leq 
	\sqrt{2} \big( \E \big[|Y|^p \big] \big)^{\!\nicefrac{1}{p}} \Big\{  2^{\nicefrac{p}{2}} \left[  \tfrac{\lambda_{e_{0,0,0}}^{-2\delta} }{\lambda_{e_{0,0,0}}+\eta} +  \tfrac{\lambda_{e_{0,0,1}}^{-2\delta} }{\lambda_{e_{0,0,1}}+\eta} + \smallsum_{k\in I}  \tfrac{\lambda_{e_k}^{-2\delta} }{\lambda_{e_k}+\eta}\right]^{\nicefrac{p}{2}}  
	\\
	& \qquad +  2^{(5-6\beta)\frac{p}{2}}  \left[ \smallsum_{k \in I} \tfrac{ \lambda_{e_k}^{2\beta} \lambda_{e_k}^{- 2\delta} }{ \lambda_{e_k}+\eta}\right]^{\nicefrac{p}{2}}  \iint_{(0,1)^2} \iint_{(0,1)^2} |x-y|_2^{\beta p -1 } \,  dx \, dy \Big\}^{\!\nicefrac{1}{p}} \\
	& \leq 2 \big( \E \big[|Y|^p \big] \big)^{\nicefrac{1}{p}} 
	\left[  \smallsum_{k \in J} \tfrac{ \max\{1, \lambda_{e_k}^{2 \beta}\} \lambda_{e_k}^{-2\delta} }{ \lambda_{e_k}+\eta}  \right]^{\nicefrac{1}{2}} 
	\left[  1 +   2^{\frac{(4-6\beta)p}{2} + \frac{\beta p -1}{2}}  \right]^{\nicefrac{1}{p}}\\
	& \leq 2^{\frac{7-5\beta}{2}} \big( \E \big[|Y|^p \big] \big)^{\nicefrac{1}{p}} 
	\left[  \smallsum_{k \in \mathbb{H}_n} \tfrac{ \max\{1, \lambda_{e_k}^{2 \beta}\} \lambda_{e_k}^{-2\delta} }{ \lambda_{e_k}+\eta}  \right]^{\nicefrac{1}{2}} <\infty.
	\end{split}
	\end{equation}
	The fact that the latter quantity is finite is due to the fact that $\mathbb{H}_n=\{e_k \colon k\in J\}$ is a finite set and that $\lambda_{e_k} >0$ for all $k\in J$.
	Next observe that  the Sobolev embedding theorem and the assumption that $\beta p > 2$ (see, e.g., Lemma~\ref{lem:NS:cont}) ensure that
	\begin{equation*}
	\sup \! \Big(\Big\{ \! \sup\nolimits_{x \in (0,1)^2} |v(x)|_2 \colon \big[v \in \mathcal{C}((0,1)^2, \R^2) \text{ and } \|v\|_{\W^{\beta, p}((0,1)^2, \mathbb{R}^2)} \leq 1\big] \Big\}\Big) < \infty.
	\end{equation*}
	Combining this with \eqref{lem:coerc_eq1} and \eqref{lem:coerc_last}  completes the proof of Lemma~\ref{lem:coerc_finite}.
\end{proof}

%%%%%%%%%%%%%%%%%%%%%%%%%%%%%%%%%%%

\begin{lemma}[Existence of a continuous version] \label{lem:exist:O}
Assume Setting~\ref{sett:noise} and
	let $p \in [1,\infty)$.
	%% this choice for $p$ is just for not specifying what we mean by $L^p$ for $p<1$.
	Then 
	\begin{enumerate}[(i)]
			\item \label{item:exist:O:KS} 
		it holds for all $\varepsilon \in (0,\min\{\nicefrac12,\delta-\varrho\})$ that
		\begin{equation*} 
		\begin{split}
		\sup_{n\in \N} \sup & \bigg(\bigg\{ \tfrac{\left\| \sum_{i=1,2} (-1)^i \int_0^{t_i} P_n \, e^{(t_i-s)A} (-A)^{-\delta} \,  dW_s \right\|_{L^p(\P; H_{\varrho})}}{(t_2-t_1)^{\varepsilon}} \colon 
		\\
		& \qquad t_1, t_2 \in [0, T], t_1 < t_2 \bigg\} \cup \{0\}\bigg) <\infty
		\end{split}
		\end{equation*}
		and
			\item \label{item:exist:O:2}
		for all $n\in \N$ there exists stochastic processes with continuous sample paths $O \colon [0,T] \times \Omega \to H_{\varrho}$ and $\mathcal{O}^n \colon  [0,T] \times \Omega \to P_n(H)$
		satisfying for all $t \in [0,T]$ that 
		$[O_t]_{\P, \mathcal{B}(H)} =  \int_0^{t}  e^{(t-s)A} (-A)^{-\delta} \,  dW_s$ and that 
		$[{\mathcal{O}}^n_t]_{\P, \mathcal{B}(H)} =  \int_0^{t} P_n \, e^{(t-s)A}  (-A)^{-\delta}  \, dW_s$.
	\end{enumerate}
\end{lemma}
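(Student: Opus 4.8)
The plan is to establish Item~\eqref{item:exist:O:KS} by a direct second-moment bound on the temporal increments of the truncated stochastic convolutions (the projection $P_n$ entering only through the trivial inclusion $\mathbb{H}_n\subseteq\mathbb{H}$), and then to deduce Item~\eqref{item:exist:O:2} from the Kolmogorov--Chentsov continuity theorem. Throughout I may assume $p\ge 2$, since for $p\in[1,2)$ one reduces to $p=2$ via $\|\cdot\|_{L^p(\P;H_\varrho)}\le\|\cdot\|_{L^2(\P;H_\varrho)}$.

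For Item~\eqref{item:exist:O:KS} I would fix $\varepsilon\in(0,\min\{\nicefrac12,\delta-\varrho\})$, $n\in\N$, $0\le t_1<t_2\le T$, and decompose
\begin{equation*}
\begin{split}
\smallsum_{i=1,2}(-1)^i \int_0^{t_i}\! P_n\, e^{(t_i-s)A}(-A)^{-\delta}\,dW_s
&= \int_{t_1}^{t_2}\! P_n\, e^{(t_2-s)A}(-A)^{-\delta}\,dW_s \\
&\quad + \int_0^{t_1}\! P_n\big(e^{(t_2-s)A}-e^{(t_1-s)A}\big)(-A)^{-\delta}\,dW_s,
\end{split}
\end{equation*}
and apply to each summand the Burkholder--Davis--Gundy type inequality (Theorem~4.37 in~\cite{da2014stochastic}), exactly as in the proof of Lemma~\ref{lemma:conv:rate}; this reduces the task to bounding the $\operatorname{HS}(H,H_\varrho)$-norms of the two integrands, and since $\mathbb{H}_n\subseteq\mathbb{H}$ the resulting sums over $\mathbb{H}_n$ are dominated by the corresponding sums over all of $\mathbb{H}$, which yields the uniformity in $n$. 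For the first summand the squared Hilbert--Schmidt norm equals $\smallsum_{h\in\mathbb{H}_n}(\kappa+\lambda_h)^{2\varrho}\lambda_h^{-2\delta}e^{-2\lambda_h(t_2-s)}$, and the elementary inequality $\int_{t_1}^{t_2}e^{-2\lambda_h(t_2-s)}\,ds\le\min\{t_2-t_1,(2\lambda_h)^{-1}\}\le(t_2-t_1)^{2\varepsilon}(2\lambda_h)^{-(1-2\varepsilon)}$ (valid for $\varepsilon\le\nicefrac12$) gives a bound of the form $(t_2-t_1)^{2\varepsilon}\smallsum_{h\in\mathbb{H}}(\kappa+\lambda_h)^{2\varrho}\lambda_h^{-(1+2\delta-2\varepsilon)}$. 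For the second summand the squared Hilbert--Schmidt norm equals $\smallsum_{h\in\mathbb{H}_n}(\kappa+\lambda_h)^{2\varrho}\lambda_h^{-2\delta}e^{-2\lambda_h(t_1-s)}(1-e^{-\lambda_h(t_2-t_1)})^2$, and using $(1-e^{-\lambda_h(t_2-t_1)})^2\le\min\{1,(\lambda_h(t_2-t_1))^2\}\le(\lambda_h(t_2-t_1))^{2\varepsilon}$ together with $\int_0^{t_1}e^{-2\lambda_h(t_1-s)}\,ds\le(2\lambda_h)^{-1}$ yields a bound of the same form $(t_2-t_1)^{2\varepsilon}\smallsum_{h\in\mathbb{H}}(\kappa+\lambda_h)^{2\varrho}\lambda_h^{-(1+2\delta-2\varepsilon)}$. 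The series $\smallsum_{h\in\mathbb{H}}(\kappa+\lambda_h)^{2\varrho}\lambda_h^{-(1+2\delta-2\varepsilon)}$ is finite by Item~\eqref{item:useful:2} in Lemma~\ref{lem:useful} (applied with its ``$\beta$'' $=2\delta-2\varepsilon$ and its ``$\varepsilon$'' $=2\varrho$, admissible exactly because $\varepsilon<\delta-\varrho$), so each summand is, after the Burkholder--Davis--Gundy estimate, at most a finite constant $C$ independent of $n,t_1,t_2$ times $(t_2-t_1)^{\varepsilon}$; the triangle inequality in $L^p(\P;H_\varrho)$ then proves Item~\eqref{item:exist:O:KS}.

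For Item~\eqref{item:exist:O:2} I would first note that $O_t=\int_0^t e^{(t-s)A}(-A)^{-\delta}\,dW_s$ defines an $H_\varrho$-valued random variable for each $t\in[0,T]$, since $\E[\|O_t\|_{H_\varrho}^2]\le\tfrac12\smallsum_{h\in\mathbb{H}}(\kappa+\lambda_h)^{2\varrho}\lambda_h^{-(1+2\delta)}<\infty$ by Item~\eqref{item:useful:2} in Lemma~\ref{lem:useful} (using $\varrho<\delta$), and that the increment computation above, carried out with $P_n$ removed (the sums now ranging over all of $\mathbb{H}$), shows for every $p\in[1,\infty)$ and every $\varepsilon\in(0,\min\{\nicefrac12,\delta-\varrho\})$ the existence of a constant $C$ with $\|O_{t_2}-O_{t_1}\|_{L^p(\P;H_\varrho)}\le C(t_2-t_1)^{\varepsilon}$, and likewise $\|\mathcal{O}^n_{t_2}-\mathcal{O}^n_{t_1}\|_{L^p(\P;H_\varrho)}\le C(t_2-t_1)^{\varepsilon}$ for each fixed $n\in\N$. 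Fixing such an $\varepsilon$ and then choosing $p$ so large that $\varepsilon p>1$, the Kolmogorov--Chentsov continuity theorem, applied in the Banach spaces $H_\varrho$ and $P_n(H)$, provides modifications of $O$ and of each $\mathcal{O}^n$ with $(\varepsilon-\nicefrac1p)$-Hölder continuous, and in particular continuous, sample paths; these are the processes asserted in Item~\eqref{item:exist:O:2}.

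I do not expect a serious obstacle: the computation is essentially a repetition of the Itô-isometry bookkeeping already performed for Lemma~\ref{lemma:conv:rate}. The only delicate point is to choose the two elementary interpolation inequalities so that a temporal factor $(t_2-t_1)^{2\varepsilon}$ is extracted while the remaining spatial series stays summable; this forces precisely the constraint $\varepsilon<\delta-\varrho$, the additional requirement $\varepsilon\le\nicefrac12$ being what is needed for the first of those inequalities. The uniformity in $n$ is not an obstacle, being a consequence of the trivial inclusion $\mathbb{H}_n\subseteq\mathbb{H}$.
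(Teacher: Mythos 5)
Your proposal is correct and follows essentially the same route as the paper: the same increment decomposition, the Burkholder--Davis--Gundy type inequality of \cite[Theorem~4.37]{da2014stochastic}, summability of the spectral series via Item~\eqref{item:useful:2} in Lemma~\ref{lem:useful} under the constraint $\varepsilon<\delta-\varrho$, uniformity in $n$ from $\mathbb{H}_n\subseteq\mathbb{H}$, and the Kolmogorov--Chentsov theorem with $\varepsilon p>1$ for the continuous modifications. The only (cosmetic) difference is that you extract the factor $(t_2-t_1)^{2\varepsilon}$ eigenvalue-wise through elementary interpolation inequalities, whereas the paper factors the semigroup difference through $\|(\kappa-A)^{-\varepsilon}(\Id_H-e^{(t_2-t_1)A})\|_{L(H)}\leq (t_2-t_1)^{\varepsilon}$; both computations are equivalent.
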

\begin{proof}[Proof of Lemma~\ref{lem:exist:O}]
	Throughout this proof let $\varepsilon \in (0,\min\{\nicefrac12,\delta-\varrho\})$, $q \in (\max\{p, \nicefrac{1}{\varepsilon} \}, \infty)$.
	Then observe that the Burkholder-Davis-Gundy type inequality in Theorem~4.37 in~\cite{da2014stochastic}  shows  for all $n \in \N$,  $t_1, t_2 \in [0, T]$ with $t_1 < t_2$  that
	\begin{equation*}
	\begin{split}
	&\left\| \int_0^{t_1} P_n \, e^{(t_1-s)A} (-A)^{-\delta} \,  dW_s - \int_0^{t_2}  P_n \, e^{(t_2-s)A} (-A)^{-\delta} \,  dW_s \right\|^2_{L^q(\P; H_{\varrho})}  \\
	& \quad +
	\left\| \int_0^{t_1} e^{(t_1-s)A} (-A)^{-\delta} \,  dW_s - \int_0^{t_2}  e^{(t_2-s)A} (-A)^{-\delta} \,  dW_s \right\|^2_{L^q(\P; H_{\varrho})}  \\
	& \leq 
	2 \tfrac{q(q-1)}{2} \int_{t_1}^{t_2} \big \| e^{(t_2-s)A} (-A)^{-\delta} \big\|^2_{\operatorname{HS}(H, H_{\varrho})} \, ds \\
	& \quad + 2 \tfrac{q(q-1)}{2}  \int_{0}^{t_1} \big \| (e^{(t_1-s)A}- e^{(t_2-s)A}) (-A)^{-\delta} \big\|^2_{\operatorname{HS}(H, H_{\varrho})} \, ds.
	\end{split}
	\end{equation*}
Moreover it holds for all $s, t,t_1, t_2 \in [0, T]$ with $t<t_1 < s< t_2$  that
\begin{equation*}
	\| e^{(t_2-s)A} (-A)^{-\delta} \big\|^2_{\operatorname{HS}(H, H_{\varrho})} 
	% = \big \| (\kappa-A)^{\varrho} \, e^{(t_2-s) A} (-A)^{-\delta} \big\|^2_{\operatorname{HS}(H)}
	= \sum_{h\in \mathbb{H}}  (\kappa+\lambda_h)^{2\varrho} \, \lambda_h^{-2\delta} \, e^{-2 (t_2-s) \lambda_h}
\end{equation*}
and
\begin{equation*}
\begin{split}
	& \big \| (e^{(t_1-t)A}- e^{(t_2-t)A}) (-A)^{-\delta} \big\|^2_{\operatorname{HS}(H, H_{\varrho})}
	\\
	& \leq
	\big\| (\kappa-A)^{\varrho+\varepsilon} \, e^{(t_1-t) A}  \, (-A)^{-\delta} \big\|_{\operatorname{HS}(H)}^2 \, \big\|(\kappa-A)^{-\varepsilon} \big(\Id_H - e^{(t_2-t_1) A}\big) \big\|_{L(H)}^2
	\\
	&
\big\|(\kappa-A)^{-\varepsilon} \big(\Id_H - e^{(t_2-t_1) A}\big) \big\|_{L(H)}^2 \left[\sum_{h\in \mathbb{H}} (\kappa+\lambda_h)^{2(\varrho+\varepsilon)}  \, \lambda_h^{-2\delta} \, e^{-2 (t_1-t) \lambda_h}\right] \!.
\end{split}
\end{equation*}
Furthermore note that the fact that $\varepsilon<\nicefrac12$ and the fact that for all $t \in [0,\infty)$, $r \in [0,1] $ it holds that 
$
	\|(\kappa-A)^{-r} (-A)^r\|_{L(H)}\leq 1
$
and 
$
	\|(-A)^{-r}(\Id_H - e^{ t A })\|_{L(H)}\leq t^r
$ (cf., e.g., Lemma~11.36 in \cite{renardy2006introduction})
imply that for all $t_1, t_2 \in [0, T]$ with $t_1 < t_2$ 
it holds that
\begin{equation*}
\begin{split}
	& \big\|(\kappa-A)^{-\varepsilon} \big(\Id_H - e^{(t_2-t_1) A}\big) \big\|_{L(H)}\\
	& \leq \big\|(\kappa-A)^{-\varepsilon} (-A)^{\varepsilon} \big\|_{L(H)} \big\| (-A)^{-\varepsilon} \big(\Id_H - e^{(t_2-t_1) A}\big) \big\|_{L(H)}
	\leq (t_2-t_1)^{\varepsilon}.
\end{split}
\end{equation*}
The four inequalities and
Fatou's lemma assure for all   $n \in \N$,  $t_1, t_2 \in [0, T]$ with $t_1 < t_2$  that	
	\begin{equation*}
	\begin{split}
	& \left\| \int_0^{t_1} P_n \, e^{(t_1-s)A} (-A)^{-\delta} \,  dW_s - \int_0^{t_2}  P_n \, e^{(t_2-s)A} (-A)^{-\delta} \,  dW_s \right\|^2_{L^q(\P; H_{\varrho})}  \\
	& \quad +
	\left\| \int_0^{t_1} e^{(t_1-s)A} (-A)^{-\delta} \,  dW_s - \int_0^{t_2}  e^{(t_2-s)A} (-A)^{-\delta} \,  dW_s \right\|^2_{L^q(\P; H_{\varrho})} \\
	& \leq 
	q(q-1)    \bigg[ \liminf_{m\to \infty}\sum_{h\in \mathbb{H}_m} \int_{t_1}^{t_2}  (\kappa+\lambda_h)^{2\varrho} \, \lambda_h^{-2\delta} \, e^{-2 (t_2-s) \lambda_h} \, ds \\
	& \quad +  (t_2-t_1)^{2\varepsilon} \,  \liminf_{m\to \infty}\sum_{h\in \mathbb{H}_m} \int_0^{t_1} (\kappa+\lambda_h)^{2(\varrho+\varepsilon)}  \, \lambda_h^{-2\delta} \, e^{-2 (t_1-s) \lambda_h}  \,  ds  \bigg]
	\\
	& \leq
	q(q-1)    \bigg[ \smallsum_{h\in \mathbb{H}} \tfrac{(\kappa+\lambda_h)^{2\varrho} (1-e^{-2 \lambda_h (t_2-t_1)}) }{2 \lambda_h^{1+2\delta} } 
	+  (t_2-t_1)^{2\varepsilon} \, \smallsum_{h\in \mathbb{H}} \tfrac{(\kappa+\lambda_h)^{2(\varrho+\varepsilon)} (1-e^{-2 \lambda_h t_1}) }{2 \lambda_h^{1+2\delta} } \bigg]\!.
	\end{split}
	\end{equation*}
	Note that the fact that $\varepsilon \leq \nicefrac12$ and the fact that $\forall \, x\in [0,\infty)$, $r\in [0,1] $ it holds that $r\leq r^{2\varepsilon}$ and  $1-e^{-x} \leq x$ show that $0 \leq 1-e^{-x} \leq  \min\{x,1\}^{2 \varepsilon}$.
Hence, we obtain for all $n \in \N$,  $t_1, t_2 \in [0, T]$ with $t_1 < t_2$ that
	\begin{equation*}
	\begin{split}
	& \smallsum_{h\in \mathbb{H}} \tfrac{(\kappa+\lambda_h)^{2\varrho} (1-e^{-2 \lambda_h (t_2-t_1)}) }{2 \lambda_h^{1+2\delta} } 
	+  (t_2-t_1)^{2\varepsilon} \, \smallsum_{h\in \mathbb{H}} \tfrac{(\kappa+\lambda_h)^{2(\varrho+\varepsilon)} (1-e^{-2 \lambda_h t_1}) }{2 \lambda_h^{1+2\delta} }  \\
	& \leq
	\smallsum_{h\in \mathbb{H}} \tfrac{(\kappa+\lambda_h)^{2\varrho} \min\{1,2 \lambda_h (t_2-t_1)\}^{2\varepsilon} }{2 \lambda_h^{1+2\delta} } 
	+  (t_2-t_1)^{2\varepsilon} \, \smallsum_{h\in \mathbb{H}} \tfrac{(\kappa+\lambda_h)^{2(\varrho+\varepsilon)} }{2 \lambda_h^{1+2\delta} }\\
	& \leq 
	\left(1+\tfrac12\right) \left[\smallsum_{h\in \mathbb{H}} \tfrac{(\kappa+\lambda_h)^{2(\varrho+\varepsilon)} }{\lambda_h^{1+2\delta} } \right] (t_2-t_1)^{2\varepsilon}.
	\end{split}
	\end{equation*}
	Therefore, it holds for all  $n \in \N$,  $t_1, t_2 \in [0, T]$ with $t_1 < t_2$ that
	\begin{equation} \label{eq:exists:O:KC}
	\begin{split}
		& \left\| \int_0^{t_1} P_n \, e^{(t_1-s)A} (-A)^{-\delta} \,  dW_s - \int_0^{t_2}  P_n \, e^{(t_2-s)A} (-A)^{-\delta} \,  dW_s \right\|_{L^q(\P; H_{\varrho})}  \\
	& \quad +
	\left\| \int_0^{t_1} e^{(t_1-s)A} (-A)^{-\delta} \,  dW_s - \int_0^{t_2}  e^{(t_2-s)A} (-A)^{-\delta} \,  dW_s \right\|_{L^q(\P; H_{\varrho})}  \\
	& \leq 
	\left(3 q(q-1)\right)^{\nicefrac{1}{2}} \left[\smallsum_{h\in \mathbb{H}} \tfrac{(\kappa+\lambda_h)^{2(\varrho+\varepsilon)} }{\lambda_h^{1+2\delta} } \right]^{\nicefrac{1}{2}} (t_2-t_1)^{\varepsilon}.
	\end{split}
	\end{equation}
	The latter quantity is finite for all $n\in \N$ because $\delta>\varrho+\varepsilon$ and Item~\eqref{item:useful:2} in Lemma~\ref{lem:useful} ensure that $\sum_{h\in \mathbb{H}} \lambda_h^{-(1+2\delta)} (\kappa+\lambda_h)^{2(\varrho+\varepsilon)} <\infty$. 
	This implies that 
	\begin{equation*} 
	\begin{split}
		\sup_{n\in \N} \sup & \bigg(\bigg\{ \tfrac{\left\| \sum_{i=1,2} (-1)^i \int_0^{t_i} P_n \, e^{(t_i-s)A} (-A)^{-\delta} \,  dW_s \right\|_{L^q(\P; H_{\varrho})}}{(t_2-t_1)^{\varepsilon}} \colon 
		\\
		& \qquad t_1, t_2 \in [0, T], t_1 < t_2 \bigg\} \cup \{0\}\bigg) <\infty.
	\end{split}
	\end{equation*}
	The fact that $p<q$ establishes Item~\eqref{item:exist:O:KS}.

	Moreover note that \eqref{eq:exists:O:KC}, the Kolmogorov-Chentsov theorem,
	%% {\color{red} WRITE REFERENCE}, 
	and the fact that $ q \varepsilon > 1$, hence demonstrate that there exist stochastic processes  $ O \colon [0, T] \times \Omega \to H_{\varrho}$, ${\mathcal{O}}^n \colon [0, T] \times \Omega \to P_n(H)$, $n \in \N$, and ${\mathbb{O}}^n \colon [0, T] \times \Omega \to P_n(H)$, $n \in \N$ with continuous sample paths which satisfy for all $n \in \N$, $t \in [0, T]$ that 
	$
	[O_t]_{\P, \mathcal{B}(H)} =  \int_0^{t}  e^{(t-s)A} (-A)^{-\delta}\,  dW_s
	$
	and
	$
	[{\mathcal{O}}^n_t]_{\P, \mathcal{B}(H)} =  \int_0^{t} P_n \, e^{(t-s)A}  (-A)^{-\delta} \, dW_s
	$.
	This establishes Item~\eqref{item:exist:O:2}. 
	%%% The fact that $\mathbb{O}$ has continuous sample paths can be seen e.g. via Item (i) in Lemma 2.2 in \cite{jentzen2017strong}
	The proof of Lemma~\ref{lem:exist:O} is thus completed.
\end{proof}

%%%%%%%%%%%%%%%%%%%%%%%%%%%%%%%%%%%%%%%%%%%%%%%%%%%%%%%%%%%%%%%%%%%%%%%%%%

\begin{proposition}[Exponential integrability properties]\label{prop:prop:O}
	Assume Setting~\ref{sett:noise}, let $p\in (4,\infty)$, $\zeta \in [\nicefrac1p, \infty)$,
	let $\mathcal{O}^n \colon [0,T] \times \Omega \to P_n(H)$, $n \in \N$, and $O \colon [0,T]\times \Omega \to H_{\varrho}$ 
	be stochastic processes with continuous sample paths satisfying for all $t\in [0,T]$, $n\in \N$ that
	$[O_t]_{\P, \mathcal{B}(H)} =  \int_0^{t}  e^{(t-s)A} (-A)^{-\delta} \,  dW_s$ and that 
	$[{\mathcal{O}}^n_t]_{\P, \mathcal{B}(H)} =  \int_0^{t} P_n \, e^{(t-s)A}  (-A)^{-\delta}  \, dW_s$,
	and let $\phi, \Phi \colon H_1 \mapsto [0,\infty)$ be functions which satisfy for all $u\in H_1$ that $\phi(u)=\zeta + \zeta \left[\sup\nolimits_{x \in (0,1)^2} |\underline{u}(x)|_2^2\right]$ and $\Phi(u)= \zeta \max\left\{ 1, \left[\sup\nolimits_{x \in (0,1)^2} |\underline{u}(x)|_2^\zeta\right]\right\}$.
	There exists $\eta \in [\kappa,\infty)$ and stochastic processes with continuous sample paths $ \mathbb{O}^n \colon [0,T] \times \Omega \to P_n(H)$, $n \in \N$, which satisfy
	\begin{enumerate}[(i)]
		\item \label{item:prop:O:2}
		for all $t \in [0,T]$, $n \in \N$ that 
		$\left[\mathbb{O}_t^n \right]_{\P, \mathcal{B}(H)} = \int_0^t P_n \, e^{(t-s)(A-\eta)} \, (-A)^{-\delta} \, dW_s$ and 
		$
		\mathbb{O}^n_t + P_n \, e^{ t (A-\eta) } \, \xi  = {\mathcal{O}}_t^n + P_n e^{ t A } \xi - \int_0^t e^{(t-s)(A-\eta)} \, \eta (\mathcal{O}_s^n + P_n e^{ s A } \xi ) \, ds 
		$
%		,
%		\item \label{item:prop:O:3}
%		that $ \sup_{ m \in \N} \sup_{ s \in [0,T]} \E[ \| \mathbb{O}_s^m + P_m  e^{s(A-\eta)}  \xi \|_H^p] < \infty$, 
		and 
		\item \label{item:prop:O:4}
		that
		\begin{equation*}
		\begin{split}
		&\sup_{ m \in \N} \E\bigg[ \int_0^T \exp \left( \smallint_s^T p\,\phi\big(\mathbb{O}_{\lf u \rf_{h_m} }^m +  P_m  e^{\lf u \rf_{h_m} (A-\eta)} \xi \big) \, du \right) \\
		& \quad  \cdot \max\Big\{ \big|\Phi(\mathbb{O}_{ \lf s \rf_{h_m} }^m + P_m  e^{\lf s \rf_{h_m} (A-\eta)} \xi)\big|^{p/2}, \\
		& \qquad  \big\|\mathbb{O}_s^m+ P_m  e^{s (A-\eta)} \xi\big\|_H^p, 1, \smallint\nolimits_{0}^T \big\| \mathcal{O}_u^m+ P_m \,  e^{u (A-\eta)} \xi \big\|_{H_{\varrho}}^{6 p} \, du \, \Big\} \, ds \bigg]< \infty
		\end{split}
		\end{equation*}
	\end{enumerate}
\end{proposition}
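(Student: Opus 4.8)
\emph{Strategy.} The plan is to choose the damping parameter $\eta$ large and to let $\mathbb{O}^n$ be the correspondingly damped stochastic convolutions. Damping leaves the stochastic integral unchanged up to a lower-order term (which gives Item~\eqref{item:prop:O:2}) but shrinks the $L^2(\P)$-size of the $L^\infty$-norm of $\mathbb{O}^n$ to an arbitrarily small value, uniformly in $n$ and in time, so that a Gaussian exponential-moment estimate applies with the fixed constants appearing in Item~\eqref{item:prop:O:4}.

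\emph{Step 1: construction of $\mathbb{O}^n$ and proof of Item~\eqref{item:prop:O:2}.} Fix $\beta\in(0,\nicefrac12)$ (so in particular $\beta<\delta$) and, for $\eta\in[\kappa,\infty)$ still to be chosen, set $\mathbb{O}^n_t:=\int_0^t P_n\,e^{(t-s)(A-\eta)}(-A)^{-\delta}\,dW_s$; a modification with continuous sample paths exists by the Kolmogorov--Chentsov argument in the proof of Lemma~\ref{lem:exist:O}, since $A-\eta$ decays even faster than $A$. A standard variation-of-constants computation -- equivalently, stochastic Fubini applied to the semigroup identity $e^{(t-s)A}=e^{(t-s)(A-\eta)}+\eta\int_s^t e^{(t-r)(A-\eta)}e^{(r-s)A}\,dr$, or comparison of the finite-dimensional SDEs solved by $Z^n_t:=\mathcal{O}^n_t+P_n e^{tA}\xi$ and $Y^n_t:=\mathbb{O}^n_t+P_n e^{t(A-\eta)}\xi$ in $P_n(H)$, whose stochastic parts cancel upon subtraction -- yields $Y^n_t=Z^n_t-\eta\int_0^t e^{(t-s)(A-\eta)}Z^n_s\,ds$ for each $t$ $\P$-a.s.; continuity of both sides upgrades this to the pathwise identity in Item~\eqref{item:prop:O:2}.

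\emph{Step 2: reduction of Item~\eqref{item:prop:O:4}.} Write $Q^m_t:=\mathbb{O}^m_t+P_m e^{t(A-\eta)}\xi$. Since $\phi(u)\le\zeta+\zeta\,[\sup_{x}|\und{u}(x)|_2^2]$, since $\xi\in H_\gamma\subseteq L^\infty(\lambda_{(0,1)^2};\R^2)$ (Item~\eqref{eq:NS:Hzeta} in Lemma~\ref{lem:NS:ibp}, as $\gamma>\varrho>\nicefrac12$) and $\|e^{t(A-\eta)}\|_{L(H_\gamma)}\le1$, hence $\sup_{m\in\N}\sup_{t\in[0,T]}\sup_{x}|\und{P_m e^{t(A-\eta)}\xi}(x)|_2\le C_\xi<\infty$, and since $|a+b|_2^2\le2|a|_2^2+2|b|_2^2$, the exponential factor in Item~\eqref{item:prop:O:4} is bounded, uniformly in $s$ and $m$, by $e^{c_0}\exp\!\big(2p\zeta\int_0^T\sup_{x}|\und{\mathbb{O}^m_{\lf u\rf_{h_m}}}(x)|_2^2\,du\big)$ with $c_0=c_0(p,\zeta,T,C_\xi)$. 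The $\max\{\cdots\}$ term is bounded by the sum of its four entries, each a polynomial expression in $\sup_{x}|\und{Q^m_s}(x)|_2$, $\|Q^m_s\|_H$ and $\|\mathcal{O}^m_u+P_m e^{uA}\xi\|_{H_\varrho}$; by Gaussianity (plus a bounded deterministic term) together with the summability of $\sum_h\lambda_h^{-1-2\delta}$ and $\sum_h(\kappa+\lambda_h)^{2\varrho}\lambda_h^{-1-2\delta}$ from Lemma~\ref{lem:useful} (valid because $\delta>\varrho>\nicefrac12$), all their moments are finite uniformly in $s,u\in[0,T]$ and in $m$. Hölder's inequality with a fixed pair of conjugate exponents $a,b$ then bounds $\E[\int_0^T(\text{exp factor})_s(\max)_s\,ds]$ by the product of the $L^a$-norm of the (now $s$-independent) exponential factor and the $L^b$-norm of $\int_0^T(\max)_s\,ds$, the latter already uniformly bounded in $m$.

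\emph{Step 3: the exponential moment and the choice of $\eta$ (main obstacle).} Repeating the proof of Lemma~\ref{lem:coerc_finite} with the exponent $2$ replaced by an arbitrary $q\in[2,\infty)$ (and the Sobolev integrability exponent enlarged accordingly), together with $(\E[|Y|^q])^{1/q}\lesssim\sqrt q$ for a standard normal $Y$, gives a constant $c_1$ with $\big(\E[\sup_{x}|\und{\mathbb{O}^m_t}(x)|_2^{q}]\big)^{1/q}\le c_1(\sqrt p+\sqrt q)\,D(\eta)^{1/2}$ for all $q\ge2$, uniformly in $m\in\N$ and $t\in[0,T]$, where $D(\eta):=\sum_{h\in\mathbb{H}}\frac{\max\{1,\lambda_h^{2\beta}\}\lambda_h^{-2\delta}}{\lambda_h+\eta}$ (we enlarged $\mathbb{H}_m$ to $\mathbb{H}$). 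By Lemma~\ref{lem:useful}\eqref{item:useful:2} (applicable since $2\beta<2\delta$) one has $D(0)<\infty$, hence $D(\eta)\downarrow0$ as $\eta\to\infty$ by dominated convergence. Expanding the exponential as a power series and using $k!\ge(k/e)^k$ then yields, for every fixed $c>0$, an estimate $\E[\exp(c\sup_{x}|\und{\mathbb{O}^m_t}(x)|_2^2)]\le\sum_{k\ge0}\big(C_2(c,p,c_1)\,D(\eta)\big)^k$, which is finite and uniformly bounded in $m,t$ once $\eta$ is so large that $C_2(c,p,c_1)D(\eta)<1$. Applying Jensen's inequality $\exp(2p\zeta\int_0^T g\,du)\le\tfrac1T\int_0^T\exp(2p\zeta T g(u))\,du$ to the $L^a$-norm of the exponential factor from Step~2 and invoking the previous bound with $c$ a fixed multiple of $ap\zeta T$, we fix $\eta\in[\kappa,\infty)$ so large that $\sup_m\|\text{exp factor}\|_{L^a}<\infty$; combined with Step~2 this finishes the proof. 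The hard part is precisely this step: one must carry out the all-orders version of Lemma~\ref{lem:coerc_finite} with the correct $\sqrt q$-growth of the moments, and one must check that a \emph{single} choice of $\eta$, independent of $n$ and of time, pushes the fixed constant $c\asymp ap\zeta T$ below the Gaussian exponential-integrability threshold -- which is possible only because $D(\eta)\to0$, a fact that itself rests on the summability estimates of Lemma~\ref{lem:useful} and on the freedom to take $\beta<\nicefrac12<\delta$.
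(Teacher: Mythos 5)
Your proposal is correct in outline and shares the paper's overall architecture: construct the damped convolutions $\mathbb{O}^n$, exploit that $D(\eta)=\sum_{h\in\mathbb{H}}\max\{1,\lambda_h^{2\beta}\}\,\lambda_h^{-2\delta}(\lambda_h+\eta)^{-1}\to 0$ as $\eta\to\infty$ (via Lemma~\ref{lem:useful}) to get an exponential moment bound for $\sup_{x}|\und{\mathbb{O}^n_t}(x)|_2^2$ uniformly in $n$ and $t$, and then finish with Jensen/H\"older and polynomial-moment bounds; your Step 2 matches in substance the paper's use of Lemma~2.22 in \cite{cox2013local}, Lemma~5.7 in \cite{hutzenthaler2016strong} and the Burkholder--Davis--Gundy estimate for the $H_{\varrho}$-term, and your variation-of-constants argument for Item~\eqref{item:prop:O:2} is the content of the paper's appeal to \cite[Proposition~5.1]{jentzen2017strong}, merely run in the opposite direction (you define $\mathbb{O}^n$ by the stochastic integral and verify the pathwise formula almost surely; the paper defines $Q^n$ pathwise and verifies the stochastic-integral identity). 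The genuine divergence is your Step 3: the paper never uses moments of order higher than two. It combines the second-moment bound of Lemma~\ref{lem:coerc_finite} with the choice of $\eta$ in \eqref{eq:prop:O:eq1} and Markov's inequality to get $\P\big(\sup_{x}|\und{\mathbb{O}^n_t}(x)|_2^2\geq (72pT\zeta)^{-1}\big)\leq \nicefrac1{10}$, and then cites Fernique's theorem (\cite[Proposition~4.13]{jentzen2017strong}) to conclude $\E[\exp(4pT\zeta\sup_{x}|\und{\mathbb{O}^n_t}(x)|_2^2)]\leq 13$. You instead re-derive the exponential moment from all-order moment estimates with $\sqrt{q}$-growth and a power-series summation, i.e.\ you prove a quantitative Fernique-type statement by hand. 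That route does work, but the one assertion you leave unproved is precisely its load-bearing ingredient: the bound $(\E[\sup_{x}|\und{\mathbb{O}^m_t}(x)|_2^{q}])^{1/q}\leq c_1(\sqrt p+\sqrt q)\,D(\eta)^{1/2}$ with $c_1$ \emph{independent of} $q$. If you ``enlarge the Sobolev integrability exponent accordingly'', the embedding constant of $\W^{\beta,p'}((0,1)^2,\R^2)$ into the continuous functions depends on $p'\asymp q$, and you must check it stays bounded as $p'\to\infty$ (true for fixed $\beta$, e.g.\ by a Garsia--Rodemich--Rumsey argument, but not supplied by Lemma~\ref{lem:NS:cont} as stated); alternatively you could keep the exponent $p$ fixed and invoke Gaussian moment equivalence in the Banach space $\W^{\beta,p}$, which is again a Fernique-type input. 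In short: the paper's approach buys economy (only Lemma~\ref{lem:coerc_finite} plus a black-box Fernique), yours is more self-contained in spirit but requires actually carrying out the uniform-in-$q$ extension of Lemma~\ref{lem:coerc_finite}; with that extension supplied, your argument closes.
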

\begin{proof}[Proof of Proposition~\ref{prop:prop:O}]
Let $\beta \in (\nicefrac2p, \nicefrac12)$ be fixed.
Then note that the fact that $\beta p >2$ and Sobolev embedding theorem  (see, e.g., Lemma~\ref{lem:NS:cont}) ensures that
	\begin{equation*}
	\sup \! \Big(\Big\{ \! \sup\nolimits_{x \in (0,1)^2} |v(x)|_2 \colon \big[v \in \mathcal{C}((0,1)^2, \R^2) \text{ and } \|v\|_{\W^{\beta, p}((0,1)^2, \mathbb{R}^2)} \leq 1\big] \Big\}\Big)<\infty.
	\end{equation*}
	Next observe that for all $\eta \in [0,\infty)$ it holds that
	\begin{equation*}
	\begin{split}
	\sum_{h \in \mathbb{H}} \tfrac{  \max\{1,\lambda_{h}^{2 \beta}\} \lambda_{h}^{-2\delta} }{ \lambda_{h}+\eta} 
	& = \tfrac{  \max\{1,\kt^{2 \beta}\} \kt^{-2\delta} }{ \kt+\eta} + \sum\nolimits_{h \in \mathbb{H} \setminus \{e_{0,0,0}\}} \tfrac{ \lambda_{h}^{2 \beta} \lambda_{h}^{-2\delta} }{ \lambda_{h}+\eta} \\
	& \leq  (\min\{1,\kt \})^{-(1+2\delta)} + \sum\nolimits_{h \in \mathbb{H}\setminus \{e_{0,0,0}\}}  \lambda_{h}^{2 \beta-1-2\delta}.
	\end{split}
	\end{equation*}
	Item~\eqref{item:useful:3} in Lemma~\ref{lem:useful} and the fact that $\kt >0$ assure that the latter quantity is finite.
	Hence
	\begin{equation*}
	\limsup_{\eta \to \infty} \left[ \sum_{h \in \mathbb{H}} \tfrac{  \max\{1,\lambda_{h}^{2 \beta}\} \lambda_{h}^{-2\delta} }{ \lambda_{h}+\eta}  \right]
	= \sum_{h \in \mathbb{H}} \limsup_{\eta\to\infty} \left[ \tfrac{  \max\{1,\lambda_{h}^{2 \beta}\} \lambda_{h}^{-2\delta} }{ \lambda_{h}+\eta}\right]= 0
	\end{equation*}
	that means that there exists $\eta\in[\kappa,\infty)$ such that
		\begin{equation} \label{eq:prop:O:eq1}
		\begin{split}
		& 720  p^3 T  \zeta \ 2^{8} \left[  \sum_{h \in \mathbb{H}_n} \tfrac{ \max\{1,\lambda_{h}^{2 \beta}\} \lambda_{h}^{-2\delta} }{ \lambda_{h}+\eta}  \right] \\
		& \cdot \!\!\Big[\sup \! \Big(\Big\{ \! \sup_{x \in (0,1)^2} |v(x)|_2 \colon \big[v \in \mathcal{C}((0,1)^2, \R^2) \text{ and } \|v\|_{\W^{\beta, p}((0,1)^2, \mathbb{R}^2)} \leq 1\big] \Big\}\Big)\Big]^2 \leq 1.
		\end{split}
		\end{equation}
	
	From now on let $\eta\in [\kappa, \infty)$ be fixed.
	Then, for all $n\in \N$ let ${{Q}}^n \colon [0, T] \times \Omega \to P_n(H)$ be the function satisfying for all $t\in [0,T]$
	that 
	\begin{equation} \label{eq:prop:O:Q}
	{Q}^n_t = {\mathcal{O}}_t^n + P_n e^{ t A } \xi - \int_0^t e^{(t-s)(A-\eta)} \, \eta (\mathcal{O}_s^n + P_n e^{ s A } \xi ) \, ds.
	\end{equation}
	This defines stochastic processes with continuous sample paths	
	%%% The fact that ${Q}$ has continuous sample paths can be seen e.g. via Item (i) in Lemma 2.2 in \cite{jentzen2017strong}
	%
	%
	%
	Moreover Proposition~5.1 in \cite{jentzen2017strong} 
	(with 
	%$H=H$, $U=U$, $\mathbb{H}=\mathbb{H}$, $T=T$, 	
	$\alpha=\beta=\gamma=0$, 
	% $\eta=\eta$, $\kappa=\kappa$, $A=A$, $(W_t)_{t\in [0,T]}=(W_t)_{t\in [0,T]}$, 
	$O=P_n(H)$, $F=(P_n(H) \ni v \mapsto 0 \in H)$, $\tilde F=(P_n(H) \ni v \mapsto \eta v \in H)$, 
	$B=(P_n(H) \ni v \mapsto (H \ni u \mapsto P_n (-A)^{-\delta} u ) \in \operatorname{HS}(H))$,
	$\xi=(\Omega \ni \omega \mapsto P_n \xi \in P_n(H))$, 
	$X=([0,T]\times\Omega \ni (t,\omega)\mapsto (\mathcal{O}^n_t(\omega)+P_n e^{ t A } \xi)\in P_n(H))$ 
	for $n \in \N$ in the notation of Proposition~5.1 in \cite{jentzen2017strong})
	ensures that for all $n \in \N, t\in [0,T]$ it holds that
	\begin{equation*}
	\begin{split}
	\left[ {\mathcal{O}}_t^n + P_n e^{ t A } \xi \right]_{\P, \mathcal{B}(H)} 
	& = 
	\left[ P_n e^{ t (A-\eta) } \xi + \int_0^t e^{(t-s)(A-\eta)} \, \eta (\mathcal{O}_s^n + P_n e^{ s A } \xi ) \, ds \right]_{\P, \mathcal{B}(H)} 
	\\
	&+  
	\int_0^{t} P_n \, e^{(t-s)(A-\eta)}  (-A)^{-\delta} \, dW_s.
	\end{split}
	\end{equation*} 
	This and \eqref{eq:prop:O:Q} demonstrate for all $n \in \N, t\in [0,T]$ it holds that 
	$[{Q}^n_t - P_n e^{ t (A-\eta) } \xi ]_{\P, \mathcal{B}(H)} =  \int_0^{t} P_n \, e^{(t-s)(A-\eta)}  (-A)^{-\delta} \, dW_s $. 
	Choosing 
	$\mathbb{O}^n \colon [0,T]\times \Omega \to P_n(H) $, $n\in \N$, be functions which satisfies for all $n\in \N,t\in [0,T]$ that
	$\mathbb{O}^n_t = {Q}^n_t - P_n \, e^{ t (A-\eta) } \, \xi $
	demonstrates Item~\eqref{item:prop:O:2}.
	%%%

% 	Let us prove Item~\eqref{item:prop:O:3}. 
	
	Moreover note that for all standard normal random variables $Y \colon \Omega \to \R$ Burkholder-Davis-Gundy inequality imply that $\E\left[|Y|^p\right]^{\nicefrac{2}{p}} \leq \frac{p(p-1)}{2} \leq \frac12 p^2$.
	Markov's inequality, Lemma~\ref{lem:coerc_finite}, and \eqref{eq:prop:O:eq1} imply for all $n \in \N$, $t \in [0,T]$  that
	\begin{equation*}
	\begin{split}
	& \P \! \left( \sup\nolimits_{x \in (0,1)^2} | \und{\mathbb{O}_t^n}(x) |_2^2 \geq \tfrac{1}{72 p  T  \zeta} \right) 
	\\
	& \leq 72  p T  \zeta \,  \E \! \left[\sup\nolimits_{x \in (0,1)^2} | \und{\mathbb{O}_t^n}(x) |_2^2  \right]  \\
	& \leq  72  p^3 T \zeta  \ 2^{8} \left[  \smallsum_{h \in \mathbb{H}_n} \tfrac{ \max\{1,\lambda_{h}^{2 \beta}\} \lambda_{h}^{-2\delta} }{ \lambda_{h}+\eta}  \right]\\
	& \quad \cdot \Big[\sup \! \Big(\Big\{ \! \sup\nolimits_{x \in (0,1)^2} |v(x)|_2 \colon \big[v \in \mathcal{C}((0,1)^2, \R^2) \text{ and } \|v\|_{\W^{\beta, p}((0,1)^2, \mathbb{R}^2)} \leq 1\big] \Big\}\Big)\Big]^2 
	\\
	& \leq \tfrac{1}{10}.
	\end{split}
	\end{equation*}
	Therefore Fernique's Theorem in \cite[Proposition~4.13]{jentzen2017strong} 
	(with 
	$V=P_n(H)$, 
	$\|\cdot\|_V=\left(P_n(H)\ni v\mapsto \sup_{x \in (0,1)^2 } |\underline{v}(x)|_2 \in [0,\infty)\right)$, 
	$X=O_t^n$, 
	$R=(72 p T \zeta)^{-\nicefrac12}$ for $t\in [0,T]$, $n \in \N$)
	 shows  for all $n \in \N$, $t \in [0, T]$  that
	\begin{equation}\label{eq:prop:O:13}
	\E \! \left[ \exp \! \left(4 p T \zeta  \Big\{ \! \sup\nolimits_{x \in (0,1)^2} | \und{\mathbb{O}_t^n}(x) |_2^2\Big\} \right)\right] \leq 13.
	\end{equation}

	Let us now prove Item~\eqref{item:prop:O:4}.
	First note that Fubini theorem together with Jensen's inequality ensure for all  $n \in \N$ that
	\begin{equation*}
	\begin{split}
	&\bigg(\E\bigg[ \int_0^T \exp \left( \smallint_s^T p\,\phi\big( {Q}_{\lf u \rf_{h_n} }^n\big) \, du \right) \\
	& \quad \cdot 
	\max\Big\{ 1, \big|\Phi({Q}_{ \lf s \rf_{h_n} }^n)\big|^{p/2}, \big\|{Q}_s^n\big\|_H^p, \smallint\nolimits_{0}^T \big\| \mathcal{O}_u^n+ P_n  e^{u (A-\eta)} \xi \big\|_{H_{\varrho}}^{6p} \, du \, \Big\} \, ds \bigg]\bigg)^{\! 2} \\
	& = 
	\bigg( \int_0^T  \E\bigg[\! \exp \left( \smallint_s^T p\,\phi\big( {Q}_{\lf u \rf_{h_n} }^n\big) \, du \right) \\
	& \quad \cdot 
	\max\Big\{1, \big|\Phi({Q}_{ \lf s \rf_{h_n} }^n)\big|^{p/2}, \big\|{Q}_s^n\big\|_H^p, \smallint\nolimits_{0}^T \big\| \mathcal{O}_u^n+ P_n  e^{u (A-\eta)} \xi \big\|_{H_{\varrho}}^{6p} \, du \, \Big\} \bigg] \, ds \bigg)^{\! 2} \\
	& \leq T  
	\int_0^T  \bigg(\E\bigg[\! \exp \left( \smallint_s^T p\,\phi\big( {Q}_{\lf u \rf_{h_n} }^n\big) \, du \right) \\
	& \quad \cdot 
	\max\Big\{1 \big|\Phi({Q}_{ \lf s \rf_{h_n} }^n)\big|^{p/2}, \big\|{Q}_s^n\big\|_H^p, \smallint\nolimits_{0}^T \big\| \mathcal{O}_u^n+ P_n  e^{u (A-\eta)} \xi \big\|_{H_{\varrho}}^{6p} \, du \, \Big\} \bigg]\bigg)^{\! 2} \, ds. 
	\end{split}
	\end{equation*}
Holder's inequality yield that
	\begin{equation}\label{eq:abstract:holder}
	\begin{split}
	& \bigg(\E\bigg[ \int_0^T \exp \left( \smallint_s^T p\,\phi\big( {Q}_{\lf u \rf_{h_n} }^n\big) \, du \right) \\
	& \quad \cdot 
	\max\Big\{ 1, \big|\Phi({Q}_{ \lf s \rf_{h_n} }^n)\big|^{p/2}, \big\|{Q}_s^n\big\|_H^p, \smallint\nolimits_{0}^T \big\| \mathcal{O}_u^n+ P_n  e^{u (A-\eta)} \xi \big\|_{H_{\varrho}}^{6p} \, du \, \Big\} \, ds \bigg]\bigg)^{\! 2} \\
	& \leq
	T  \int_0^T  \E\bigg[\! \exp \left( \smallint_s^T 2p \,\phi\big( {Q}_{\lf u \rf_{h_n} }^n\big) \, du \right) \bigg] \\
	&  \quad \cdot 
	\E \bigg[ \! \max\Big\{1, \big|\Phi({Q}_{ \lf s \rf_{h_n} }^n)\big|^{p},  \big\|{Q}_s^n\big\|_H^{2p}, T \smallint\nolimits_{0}^T \big\| \mathcal{O}_u^n+ P_n  e^{u (A-\eta)} \xi \big\|_{H_{\varrho}}^{12p} \, du \, \Big\} \bigg] \, ds \\
	& \leq 
	T \, \E\bigg[ \! \exp \left( \smallint_0^T 2p \,\phi\big( {Q}_{\lf u \rf_{h_n} }^n\big) \, du \right) \bigg] \\
	&  \quad \cdot 
	\int_0^T  \E \bigg[  1 + \big|\Phi({Q}_{ \lf s \rf_{h_n} }^n)\big|^{p} + \big\|{Q}_s^n\big\|_H^{2p} + T \smallint\nolimits_{0}^T \big\| \mathcal{O}_u^n+ P_n  e^{u (A-\eta)} \xi \big\|_{H_{\varrho}}^{12p} \, du \bigg] \, ds.
	\end{split}
	\end{equation}
	Let us first show that $\sup_{n\in \N}\E\bigg[ \! \exp \left( \smallint_0^T 2p \,\phi\big( {Q}_{\lf u \rf_{h_n} }^n\big) \, du \right) \bigg]<\infty$.
	The fact that $ \forall \, x, y \in \R \colon |x+y|^2 \leq 2 x^2 + 2y^2$ yields  for all $n \in \N$ that
	\begin{equation*}
	\begin{split}
	& \E \!\left[ \exp \! \left( \smallint_0^T  2p \,\phi\big({Q}_{\lf u \rf_{h_n}}^n\big) \, du \right) \right] \\
	&= \E \!\left[ \exp \! \left( \smallint_0^T  2p\zeta +  2p\zeta  \Big\{ \!\sup\nolimits_{x \in (0,1)^2} \big|\und{\mathbb{O}_{\lf u \rf_{h_n}}^n +  P_n e^{\lf u \rf_{h_n} (A-\eta)} \xi}(x)\big|_2^2 \Big\} \, du \right) \right] \\
	& \leq \exp \!\left(   2p\zeta T + 4p\zeta \smallint_0^T  \Big\{ \!\sup\nolimits_{x \in (0,1)^2} |\und{ P_n  e^{\lf u \rf_{h_n} (A-\eta)} \xi}(x)|_2^2 \Big\} \, du \right) \\
	& \quad \cdot \E \!\left[ \exp \! \left( \smallint_0^T    4p\zeta  \Big\{ \!\sup\nolimits_{x \in (0,1)^2} \big|\und{\mathbb{O}_{\lf u \rf_{h_n}}^n} \!(x)\big|_2^2 \Big\} \, du \right) \right].
	\end{split}
	\end{equation*}
	Moreover, e.g., Lemma~2.22 in \cite{cox2013local}  and  \eqref{eq:prop:O:13} show  for all $n \in \N$ that
	\begin{equation*}
	\begin{split}
	& \E \! \left[ \exp \! \left(  \smallint_0^T 4 p \zeta \Big\{\!\sup\nolimits_{x \in (0,1)^2} \big|\und{\mathbb{O}_{\lf u \rf_{h_n}}^n} \! (x)\big|_2^2 \Big\} \,  du \right)\right] \\\
	& \leq 
	\frac{1}{T}  \int_0^T \E\! \left[ \exp \! \left(  4  p T \zeta \Big\{\!\sup\nolimits_{x \in (0,1)^2} \big|\und{\mathbb{O}_{\lf u \rf_{h_n}}^n}\!(x)\big|_2^2 \Big\} \right)\right] du \leq 13 .
	\end{split}
	\end{equation*} 
	Therefore, 
	we obtain that it holds 
	for all $n \in \N$  that
	\begin{equation}
	\label{eq:abs:phi0}
	\begin{split}
	& \E \!\left[ \exp \! \left( \smallint_0^T  2p \,\phi\big({Q}_{\lf u \rf_{h_n}}^n\big) \, du \right) \right] \\
	& \leq 
	13 \exp \!\left(   2p\zeta T + 4p\zeta \smallint_0^T  \Big\{ \!\sup\nolimits_{x \in (0,1)^2} |\und{ P_n  e^{\lf u \rf_{h_n} (A-\eta)} \xi}(x)|_2^2 \Big\} \, du \right).
	\end{split}
	\end{equation}
	Next, note that the Sobolev embedding theorem (see, e.g., Lemma~\ref{lem:NS:cont}) implies that 
	\begin{equation*}
	\sup \! \Big(\Big\{\! \sup\nolimits_{x \in (0,1)^2} |\und{v}(x)|_2 \colon \big[ v \in H_{\gamma} \text{ and } \|v\|_{H_{\gamma}} \leq 1\big] \Big\}\Big) < \infty.
	\end{equation*}
	This yields for all $s \in [0, T]$, $n \in \N$ that
	\begin{equation}
	\label{eq:abs:xi}
	\begin{split}
	&\sup\nolimits_{x \in (0,1)^2} | \und{P_n  e^{s(A-\eta)} \xi} (x) |_2 \\
	&\leq  \Big[\sup \! \Big(\Big\{ \!\sup\nolimits_{x \in (0,1)^2} |\und{v}(x)|_2 \colon \big[v \in H_{\gamma} \text{ and } \|v\|_{H_{\gamma}} \leq 1\big] \Big\}\Big) \Big]  \|P_n  e^{s(A-\eta)}  \xi \|_{H_{\gamma}}  \\
	& \leq \Big[\sup \! \Big(\Big\{ \sup\nolimits_{x \in (0,1)^2} |\und{v}(x)|_2 \colon \big[v \in H_{\gamma} \text{ and } \|v\|_{H_{\gamma}} \leq 1\big] \Big\}\Big) \Big] \| \xi \|_{H_{\gamma}} < \infty.
	\end{split}
	\end{equation}
	Combining this with \eqref{eq:abs:phi0} yields that 
	\begin{equation}\label{eq:abstract:phi}
	\sup_{n \in \N} \E \!\left[ \exp \! \left( \smallint_0^T  2p \,\phi\big({Q}_{\lf u \rf_{h_n}}^n\big) \, du \right) \right] < \infty.
	\end{equation}
	Let us show that $\sup_{n\in \N} \int_0^T  \E \big[\big|\Phi({Q}_{ \lf s \rf_{h_n} }^n)\big|^{p} + \big\|{Q}_s^n\big\|_H^{2p} \big] \, ds<\infty$.
	The triangle inequality, the fact that $p\geq 1$, $p\zeta\geq1$, and the fact that $ \forall \, x, y \in \R$, $a \in [1, \infty) \colon |x+y|^a \leq 2^{a-1} |x|^a + 2^{a-1} |y|^a$ shows for all $s \in [0, T]$, $n \in \N$ that
	\begin{equation*}
	\begin{split}
	& \E \! \left[ \big| \Phi\big( {Q}_{ \lf s \rf_{h_n } }^n\big) \big|^p \right]\\
	&=  
	\E  \bigg[ \zeta^p  \left| \max\left\{1, \Big\{ \!\sup\nolimits_{x \in (0,1)^2} \big|\und{{Q}_{ \lf s \rf_{h_n } }^n}(x)\big|_2^{\zeta} \Big\} \right\}\right|^p \bigg] \\
	&\leq  
	\E  \bigg[ 2^{p-1} \zeta^p  + 2^{p-1} \zeta^p  \Big\{ \!\sup\nolimits_{x \in (0,1)^2} \big|\und{{Q}_{ \lf s \rf_{h_n } }^n}(x)\big|_2^{p \zeta} \Big\} \bigg] \\
%	& \leq  
%	2^{p-1} \zeta^p + 2^{p-1} \, 2^{p\zeta-1} \, \zeta^p  \E \! \left[ \Big\{ \!\sup\nolimits_{x \in (0,1)^2} \big|\und{\mathbb{O}_{\lf s \rf_{h_n}}^n}(x)\big|_2^{p\zeta} \Big\}+ \Big\{ \!\sup\nolimits_{x \in (0,1)^2} \big|\und{ P_n e^{\lf s \rf_{h_n} (A-\eta)} \xi}(x)\big|_2^{p\zeta} \Big\} \right] \\
	& \leq  
	2^{p-1} \zeta^p +  2^{p(\zeta+1)-2} \zeta^p \, \E \! \left[  \Big\{ \!\sup\nolimits_{x \in (0,1)^2} \big|\und{\mathbb{O}_{\lf s \rf_{h_n}}^n}(x)\big|_2^{p\zeta} \Big\}
	\right.
	\\
	& \left. \quad + \Big\{ \!\sup\nolimits_{x \in (0,1)^2} \big|\und{ P_n e^{\lf s \rf_{h_n} (A-\eta)} \xi}(x)\big|_2^{p\zeta} \Big\} \right] .
	\end{split}
	\end{equation*}
	Hence, e.g., Lemma~5.7 in \cite{hutzenthaler2016strong} (with $a=4pT\zeta$, $x=\sup\nolimits_{x \in (0,1)^2} |\und{\mathbb{O}_{\lf s \rf_{h_n}}^n}(x)|_2^2$, $r=p\zeta/2$ for $s \in [0, T]$, $n \in \N$ in the notation of Lemma~5.7 in \cite{hutzenthaler2016strong}) and \eqref{eq:prop:O:13} prove for all $s \in [0, T]$, $m \in \N$ that
	\begin{equation*}
	\begin{split}
	&\E \! \left[ \big| \Phi\big( {Q}_{ \lf s \rf_{h_n } }^n\big) \big|^p \right] \leq  2^{p-1} \zeta^p+  2^{p(\zeta+1)-2} \zeta^p  \Big\{ \!\sup\nolimits_{x \in (0,1)^2} \big|\und{ P_n e^{\lf s \rf_{h_n} (A-\eta)} \xi}(x)\big|_2^{p\zeta} \Big\}   \\
	& \quad +  \tfrac{2^{p(\zeta+1)-2} \zeta^p (\lf p\zeta/2 \rf_1 +1)!}{ |4pT\zeta|^{p\zeta/2}} \, \E\! \left[  \exp \! \left( 4pT\zeta \Big\{\!\sup\nolimits_{x \in (0,1)^2} |\und{\mathbb{O}_{\lf s \rf_{h_n}}^n }\!(x)|_2^{2} \Big\} \right)  \right]\\
	&\leq 2^{p-1} \zeta^p+  2^{p(\zeta+1)-2} \zeta^p  \Big\{ \!\sup\nolimits_{x \in (0,1)^2} \big|\und{ P_n e^{\lf s \rf_{h_n} (A-\eta)} \xi}(x)\big|_2^{p\zeta} \Big\} +\tfrac{13 \cdot 2^{p(\zeta+1)-2} \zeta^p (\lf p\zeta/2 \rf_1 +1)!}{ |4pT\zeta|^{p\zeta/2}} .
	\end{split}
	\end{equation*}
	This together with \eqref{eq:abs:xi} yields that 
	\begin{equation}\label{eq:abstract:Phi}
	\begin{split}
	\sup_{n\in \N} \int_0^T \E \! \left[ \big| \Phi\big( {Q}_{ \lf s \rf_{h_n } }^n\big) \big|^p \right] ds < \infty.
	\end{split}
	\end{equation}
	Moreover, e.g., Lemma~5.7 in \cite{hutzenthaler2016strong} (with $a=4pT\zeta$, $x=\sup\nolimits_{x \in (0,1)^2} |\und{\mathbb{O}_{s}^n}(x)|_2^2$, $r=p$ for $s \in [0, T]$, $n \in \N$ in the notation of Lemma~5.7 in \cite{hutzenthaler2016strong})  and \eqref{eq:prop:O:13} ensure  for all $s \in [0, T]$, $n \in \N$  that
	\begin{equation*}
	\begin{split}
	& \E \!\left[\big\|{Q}_s^n\big\|_H^{2p} \right] \leq  \E\! \left[ 2^{2p-1}  \|\mathbb{O}_s^m\|_H^{2p} + 2^{2p-1} \| P_n  e^{s (A-\eta)} \xi\|_H^{2p} \right]\\
	& \leq 2^{2p-1} \E \! \left[\Big\{ \!\sup\nolimits_{x \in (0,1)^2} \big|\und{\mathbb{O}_{s}^n}(x)\big|_2^{2p} \Big\}\right] + 2^{2p-1} \| P_n  e^{s (A-\eta)} \xi\|_H^{2p}\\
	& \leq  \tfrac{2^{2p-1}  (\lf p \rf_1 +1)!}{ |4pT\zeta|^{p}} \E \! \left[\exp \! \left( 4pT\zeta\Big\{ \!\sup\nolimits_{x \in (0,1)^2} \big|\und{\mathbb{O}_{s}^n}(x)\big|_2^{2} \Big\} \right)\right] + 2^{2p-1} \| P_n  e^{s (A-\eta)} \xi\|_H^{2p}\\
	& \leq  \tfrac{13 \cdot 2^{2p-1}  (\lf p \rf_1 +1)!}{ |4pT\zeta|^{p}}  + 2^{2p-1} \| \xi\|_H^{2p}.
	\end{split}
	\end{equation*}
	Hence, we obtain that
	\begin{equation}\label{eq:abstract:O}
	\begin{split}
	\sup_{n \in \N} \int_0^T \E \! \left[ \big\|{Q}_s^n\big\|_H^{2p} \right] ds < \infty.
	\end{split}
	\end{equation}
	Finally, let us consider the finiteness of $\sup_{n \in \N} \E \! \left[ \smallint_{0}^T \| \mathcal{O}_u^n+ P_n  e^{u (A-\eta)} \xi \|_{H_{\varrho}}^{12p} \, du \right]$.
	Note that for all $n \in \N$ it holds that
	\begin{equation}
	\label{eq:abstract:int}
	\begin{split}
	&\E \! \left[\smallint_{0}^T \| \mathcal{O}_u^n+ P_n  e^{u (A-\eta)} \xi \|_{H_{\varrho}}^{12p} \, du  \right] \\
	&\leq 2^{12p-1} \, \E \!\left[\smallint_{0}^T \| \mathcal{O}_u^n  \|_{H_{\varrho}}^{12p}+ \|P_n \, e^{u (A-\eta)} \xi \|_{H_{\varrho}}^{12p} \, du  \right] \\
	&\leq 2^{12p-1} \, \E \!\left[\smallint_{0}^T \| \mathcal{O}_u^n  \|_{H_{\varrho}}^{12p}+ \| \xi \|_{H_{\varrho}}^{12p} \, du  \right].
	\end{split}
	\end{equation}
	Moreover, observe that the Burkholder-Davis-Gundy type inequality in Theorem~4.37 in~\cite{da2014stochastic}  implies  for all  $u \in [0, T]$, $n \in \N$  that
	\begin{equation*}
	\begin{split}
	& \E \!\left[\| \mathcal{O}_u^n  \|_{H_{\varrho}}^{12p} \right] = \E \!\left[\left\|  \int_0^u P_n \, e^{(u-s)A} \, (-A)^{-\delta}  \, dW_s \right\|_{H_{\varrho}}^{12p} \right]  \\
	&\leq \left[\tfrac{(12p)(12p-1)}{2}\right]^{6p} \left[ \int_0^u \|P_n\, e^{(u-s)A} \, (-A)^{-\delta}   \|_{\operatorname{HS}(H, H_{\varrho})}^2 \,ds\right]^{6p} \\
	& \leq \left[6p(12p-1)\right]^{6p} \left[ \int_0^u \| (\kappa-A)^{\varrho} \, P_n \,e^{(u-s)A}  \, (-A)^{-\delta}  \|_{\operatorname{HS}(H)}^2 \, ds\right]^{6p}\\
	& = \left[6p(12p-1)\right]^{6p} \left[ \sum_{h \in \mathbb{H}_n} \int_0^u (\kappa +\lambda_h)^{2\varrho} \,e^{-2\lambda_h s} \, \lambda_h^{-2\delta} \, ds\right]^{6p}\\
	& \leq \left[6p(12p-1)\right]^{6p} \left[ \sum_{h \in \mathbb{H}}  \tfrac{(\kappa +\lambda_h)^{2\varrho} (1-e^{-2\lambda_h u})}{2\lambda_h^{1+2\delta}} \right]^{6p}\\
	& \leq \left[6p(12p-1)\right]^{6p} \left[ \sum_{h \in \mathbb{H}}  \tfrac{(\kappa +\lambda_h)^{2\varrho} }{2\lambda_h^{1+2\delta}} \right]^{6p}.
	\end{split}
	\end{equation*}
	Combining this, the fact that $\delta>\varrho$, and Item~\eqref{item:useful:2} in Lemma~\ref{lem:useful} with \eqref{eq:abstract:int}  yields that
	\begin{equation*}
	\begin{split}
	\sup_{n \in \N} \E \! \left[ \smallint_{0}^T \| \mathcal{O}_u^n+ P_n  e^{u (A-\eta)} \xi \|_{H_{\varrho}}^{12p} \, du \right] < \infty.
	\end{split}
	\end{equation*}
	This, \eqref{eq:abstract:holder}, \eqref{eq:abstract:phi}, \eqref{eq:abstract:Phi}, and \eqref{eq:abstract:O} imply that
	\begin{equation*}
	\begin{split}
	&\sup_{ n \in \N} \E\bigg[ \int_0^T \exp \left( \smallint_s^T p\,\phi\big( {Q}_{\lf u \rf_{h_n} }^n\big) \, du \right) \max\Big\{ 1,\big|\Phi({Q}_{ \lf s \rf_{h_n} }^n)\big|^{p/2}, \\
	& \quad  \big\|{Q}_s^n\big\|_H^p, \smallint\nolimits_{0}^T \big\| \mathcal{O}_u^n+ P_n \, e^{u A} \xi \big\|_{H_{\varrho}}^{6p} \, du \, \Big\} \, ds \bigg]< \infty.
	\end{split}
	\end{equation*}
	This establishes Item~\eqref{item:prop:O:4}. The proof of Proposition~\ref{prop:prop:O} is thus completed.
\end{proof}

%%%%%%%%%%%%%%%%%%%%%%%%%%%%%%%%%%%%%%%%%%%%%%%%%%%%%%%%%%%%%%%%%%%%%%%%%%

%%%%%%%%%%%%%%%%%%%%%%%%%%%%
\section{Strong convergence of the approximation scheme} \label{sec:SC}
%%%%%%%%%%%%%%%%%%%%%%%%%%%%

\begin{theorem} \label{th:SC}
	\label{abs:prop:last}
	Assume Setting~\ref{sett:F} and Setting~\ref{sett:noise}, 
	let $p\in (0,\infty)$ and 
	$ \chi \in  \Big(0,  \min\big\{ \tfrac{1-\rho}{5}, \tfrac{( \varrho - \rho ) }{3} \big\}\Big] $, 
	let $ X\colon [0, T] \times \Omega \to H_{\varrho}$  be a  stochastic process with continuous sample paths which satisfies for all $t \in [0, T]$  that  $ [X_t ]_{\P, \mathcal{B}(H)} =    [ e^{ t A }   \xi  + \smallint_0^t e^{  ( t - s ) A}  \, F (  X_s ) \, ds ]_{\P, \mathcal{B}(H)} + \int_0^t e^{(t-s)A} \, (-A)^{-\delta} \,  dW_s$,
	and let $\Y^n, {\mathcal{Q}}^n \colon [0,T]\times\Omega \to P_n(H)$, $n\in \N$ sequences of stochastic processes which satisfy for all $n\in \N, t\in [0,T]$ that $[{\mathcal{Q}}^n_t ]_{\P, \mathcal{B}(H)} =  \int_0^t P_n \, e^{(t-s)A} \, \, (-A)^{-\delta} \, dW_s$ and 
	\begin{equation}\label{eq:set:abstract:tildeO}
	\begin{split}
	& 1= \P \Big( 
	\Y_t^n = P_n \, e^{ t A } \, \xi  + {\mathcal{Q}}_t^n
	\\
	& \qquad
	+ \smallint_0^t P_n \,  e^{  ( t - s ) A } \, \one_{ \{ \| \Y_{ \lf s \rf_{h_n} }^n \|_{ H_{\varrho} } + \| {\mathcal{Q}}_{ \lf s \rf_{h_n} }^n +P_n  e^{ \lf s \rf_{ h_n } A } \xi \|_{ H_{\varrho} } \leq | h_n|^{ - \chi } \}} \, F \big(  \Y_{ \lf s \rf_{ h_n } }^n \big) \, ds  \Big).
	\end{split}
	\end{equation}
	Then 
	\begin{enumerate}[(i)]
		\item \label{item:SC:O} 
		there exists a sequence of stochastic processes $\mathcal{O}^n \colon [0,T]\times\Omega \to P_n(H)$, $n\in \N$, with continuous sample paths which satisfy for all $t\in [0,T]$ that $[\mathcal{O}^n_t ]_{\P, \mathcal{B}(H)} =  \int_0^t P_n \, e^{(t-s)A} \, \, (-A)^{-\delta} \, dW_s$, 
		\item \label{item:SC:X} 
		it holds for all $n \in \N, t \in [0,T]$ that
		\begin{equation*}
		\begin{split}
		& 1 =
		\P \Big( \Y_t^n = P_n \, e^{ t A } \, \xi + {\mathcal{O}}_t^n \\
		& \qquad + \smallint_0^t P_n \,  e^{  ( t - s ) A } \, \one_{ \{ \| \Y_{ \lf s \rf_{h_n} }^n \|_{ H_{\varrho} } + \| {\mathcal{O}}_{ \lf s \rf_{h_n} }^n +P_n  e^{ \lf s \rf_{ h_n } A } \xi \|_{ H_{\varrho} } \leq | h_n|^{ - \chi } \}} \, F \big(  \Y_{ \lf s \rf_{ h_n } }^n \big) \, ds \Big),
		\end{split}
		\end{equation*}
		and
		\item \label{item:SC:SC}
		it holds that
		\begin{equation*}
		\limsup_{n \to \infty} \sup_{t \in [0,T]} \E \big[ \| X_t -\Y_t^n \|_H^p \big] = 0.
		\end{equation*}
	\end{enumerate}
\end{theorem}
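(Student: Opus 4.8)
The plan is to obtain all three items as consequences of the abstract strong-convergence result Theorem~3.5 in \cite{jentzen2017strong}, after verifying its hypotheses by means of the results established in Sections~\ref{sec:prelim}--\ref{sec:noise}. Item~\eqref{item:SC:O} is immediate: it is exactly Item~\eqref{item:exist:O:2} of Lemma~\ref{lem:exist:O}, which for every $n\in\N$ furnishes a stochastic process $\mathcal{O}^n\colon[0,T]\times\Omega\to P_n(H)$ with continuous sample paths satisfying $[\mathcal{O}^n_t]_{\P,\mathcal{B}(H)}=\int_0^t P_n\,e^{(t-s)A}(-A)^{-\delta}\,dW_s$ for all $t\in[0,T]$. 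For Item~\eqref{item:SC:X} I would note that, for each $n\in\N$ and $t\in[0,T]$, the process $\mathcal{Q}^n$ from the hypothesis and the $\mathcal{O}^n$ of Item~\eqref{item:SC:O} are $\P$-modifications of the same stochastic integral, so $\P(\mathcal{Q}^n_t=\mathcal{O}^n_t)=1$; since the right-hand side of~\eqref{eq:set:abstract:tildeO} depends on $\mathcal{Q}^n$ only through $\mathcal{Q}^n_t$ and the values $\mathcal{Q}^n_{\lf s\rf_{h_n}}$, $s\in[0,t]$, which together form a finite set of time points, and since the deterministic summand $P_n\,e^{\lf s\rf_{h_n}A}\xi$ and the indicator are otherwise unchanged, intersecting the corresponding finitely many $\P$-full events yields that~\eqref{eq:set:abstract:tildeO} holds verbatim with $\mathcal{O}^n$ in place of $\mathcal{Q}^n$; this is Item~\eqref{item:SC:X}.

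The substance is Item~\eqref{item:SC:SC}, which I would deduce from Theorem~3.5 in \cite{jentzen2017strong} applied with the Hilbert space $H$ of Setting~\ref{sett:space}, the linear operator $A$ (with shift $\kappa$) of Setting~\ref{sett:operator}, the interpolation spaces $(H_r)_{r\in\R}$, the projections $P_n$, the nonlinearity $F$ of Setting~\ref{sett:F}, the constant diffusion coefficient $(-A)^{-\delta}$, the initial datum $\xi\in H_\gamma$, the truncation exponent $\chi$, and the driving noise $W$. The hypotheses of that theorem are then checked one by one: the summability and the approximation properties of the eigenvalues $(\lambda_h)_{h\in\mathbb{H}}$ come from Lemma~\ref{lem:useful}; the continuity $F\in C(H_\rho,H)$ and the Lipschitzianity of $F$ on bounded subsets of $H_\rho$ come from Lemma~\ref{lem:locLip}; and the generalized coercivity-type condition on $\langle v,F(v+w)\rangle_H$, with quadratic part $\tfrac32|\vt|\,\|v\|_H^2+2\varepsilon\|v\|_{H_{\nicefrac12}}^2$ (the coefficient in front of $\|v\|_{H_{\nicefrac12}}^2$ being freely small) together with $w$-dependent terms controlled by $\sup_{x\in(0,1)^2}|\und{w}(x)|_2^2$ and $\sup_{x\in(0,1)^2}|\und{w}(x)|_2^4$, is Lemma~\ref{coer:NS}, the $L^\infty$-control making these suprema finite being Item~\eqref{eq:NS:Hzeta} of Lemma~\ref{lem:NS:ibp} together with Lemma~\ref{lem:NS:cont}.

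On the noise side, the existence of continuous versions of $O$ and of the $\mathcal{O}^n$ and their temporal H\"older bound uniform in $n$ come from Lemma~\ref{lem:exist:O}; the strong approximation rate $\sup_{t\in[0,T]}(\E[\|O_t-\mathcal{O}^n_t\|_{H_\varrho}^p])^{\nicefrac1p}\lesssim n^{-2\varepsilon}$, valid for every $\varepsilon\in[0,\delta-\varrho)$, is Lemma~\ref{lemma:conv:rate}; and the exponential integrability of the functionals of the shifted Ornstein--Uhlenbeck-type process $\mathbb{O}^n$ --- built from $\mathcal{O}^n$ via the shift $\eta$ and paired with the coercivity coefficients $\phi(u)=\zeta+\zeta\,[\sup_{x\in(0,1)^2}|\und{u}(x)|_2^2]$ and $\Phi(u)=\zeta\max\{1,[\sup_{x\in(0,1)^2}|\und{u}(x)|_2^\zeta]\}$ --- is supplied by Proposition~\ref{prop:prop:O}. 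Because $\chi\le\min\{\tfrac{1-\rho}{5},\tfrac{\varrho-\rho}{3}\}$, the growth exponents occurring in Proposition~\ref{prop:prop:O} (the $6p$-th power inside the time integral, the $p$-th powers of $\phi$ and $\Phi$, the $p$-th power of the $H$-norm) meet the smallness requirements of Theorem~3.5 in \cite{jentzen2017strong}, so the abstract theorem applies and yields $\limsup_{n\to\infty}\sup_{t\in[0,T]}\E[\|X_t-\Y^n_t\|_H^p]=0$.

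The main obstacle I anticipate is purely organizational: matching every parameter and every functional of Theorem~3.5 in \cite{jentzen2017strong} --- the roles of $\rho$, $\varrho$, $\gamma$, $\delta$, $\chi$, $\kappa$, $\eta$, the precise exponents in the exponential-integrability condition, and the admissible parameter ranges forced by the bound on $\chi$ --- with the objects constructed here, and checking that the coercivity estimate of Lemma~\ref{coer:NS} and the exponential integrability of Proposition~\ref{prop:prop:O} are stated in exactly the shape the abstract theorem consumes. In particular, the fact that the coefficient in front of $\|v\|_{H_{\nicefrac12}}^2$ may be chosen arbitrarily small is what allows the coercivity to be ``generalized'' in the sense of \cite{jentzen2017strong}, and this freedom has to be exploited in the application.
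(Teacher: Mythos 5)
Your proposal follows essentially the same route as the paper: Item~\eqref{item:SC:O} from Item~\eqref{item:exist:O:2} in Lemma~\ref{lem:exist:O}, Item~\eqref{item:SC:X} by the modification argument, and Item~\eqref{item:SC:SC} by invoking Theorem~3.5 in \cite{jentzen2017strong} with its hypotheses supplied by Lemma~\ref{lem:useful}, Lemma~\ref{coer:NS} (with the small coefficient in front of $\|v\|_{H_{\nicefrac12}}^2$ exploited exactly as the paper does, with $\varepsilon=\nicefrac14$ and the shift $\eta$), Lemma~\ref{lem:locLip}, Lemma~\ref{lemma:conv:rate}, Lemma~\ref{lem:exist:O}, and Proposition~\ref{prop:prop:O}. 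The only step you leave implicit is the passage from the fixed-time $L^q$-rate of Lemma~\ref{lemma:conv:rate} together with the uniform-in-$n$ temporal H\"older bound of Item~\eqref{item:exist:O:KS} in Lemma~\ref{lem:exist:O} to almost sure uniform-in-time convergence of $\mathcal{O}^n_\cdot+P_n e^{\cdot A}\xi$ to $O_\cdot+e^{\cdot A}\xi$ (the paper carries this out via Corollary~2.11 in \cite{cox2016convergence}, Lemma~3.21 in \cite{hutzenthaler2015numerical}, the bound on $\|(\Id_H-P_n)e^{tA}\xi\|_{H_{\varrho}}$, and Fatou's lemma), a routine but necessary verification before the abstract theorem can be applied.
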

\begin{proof}[Proof of Theorem~\ref{abs:prop:last}.]
	Throughout this proof let 
	$\varepsilon \in (0, \min\{\delta- \varrho,\nicefrac12\})$
	and 
	$q \in (\max\{p,\nicefrac4{\varepsilon}\},\infty)$.
		Item~\eqref{item:exist:O:2} in Lemma~\ref{lem:exist:O} ensures that there exist stochastic processes $ O \colon [0, T] \times \Omega \to H_{\varrho}$ and  $ {\mathcal{O}}^n \colon [0, T] \times \Omega \to P_n(H)$, $n \in \N$, with continuous sample paths satisfying for all $n \in \N, t\in [0,T]$ that
	$ [O_t ]_{\P, \mathcal{B}(H)} =  \int_0^t e^{(t-s)A} \, (-A)^{-\delta} \, dW_s$
	and 
	$ [\mathcal{O}^n_t ]_{\P, \mathcal{B}(H)} =  \int_0^t P_n \, e^{(t-s)A} \, \, (-A)^{-\delta} \, dW_s$.
	This establishes Item~\eqref{item:SC:O}.
	
	%
	%
	%
	%%%
	Next observe that the fact that for all $ n \in \N$, $t \in [0, T]$ it holds that $\P(\mathcal{O}_t^n = {\mathcal{Q}}^n_t )=1$ and \eqref{eq:set:abstract:tildeO} together with the fact that for all $n\in \N$ the processes $\mathcal{O}^n$ and $\mathcal{Q}^n$ have continuous sample paths establishes Item~\eqref{item:SC:X}.
	
	%
	%
	%
	%%%
	Let us prove Item~\eqref{item:SC:SC}.
	Throughout this proof let $\zeta,\theta \in [0,\infty)$ be equal to $\zeta=\max\left\{\frac1{q},\frac32 |\vt|, \frac12 |\vt| +  2 c_1^2, 4\right\}$ and
	\begin{equation*}
	\theta = \max\left\{|\vt| \left[\sup\nolimits_{u\in H_{\rho}\setminus\{0\}} \tfrac{\|u\|_{H}} {\|u\|_{H_{\rho}}}\right], 4 |c_1| \left[\smallsum_{h\in \mathbb{H}} \lambda_h^{-2\rho} \right]^{\nicefrac{1}{2}} \right\},
	\end{equation*}
	and let $\phi,\Phi \colon H_\varrho \mapsto [0,+\infty)$ be the functions which satisfy for all $v\in H_\varrho$ that $\phi(v)=\zeta (1+ \big[\!\sup\nolimits_{x \in (0,1)^2} |\und{w}(x)|_2^2\big]) $ and $\Phi(v)=\zeta  \max\left\{1, \big[\!\sup\nolimits_{x \in (0,1)^2}  |\und{w}(x)|_2^\zeta\big] \right\}$
	(Lemma~\ref{lem:NS:cont} ensures that the latter functions are well defined). %%% we could have used $r=1$ instead of $\gamma$
	First note that Lemma~\ref{lemma:conv:rate} yields
	\begin{equation*}
	\sup\nolimits_{n \in \N} \left\{n^{\varepsilon} \sup\nolimits_{t \in [0, T]} \left(\E\!\left[\|O_t  - {\mathcal{O}}_t^n \|_{H_{\varrho}}^q \right]\right)^{\nicefrac1q} \right\}< \infty.
	\end{equation*}
	This, the fact that $ O \colon [0, T] \times \Omega \to H_{\varrho}$ and  ${\mathcal{O}}^n \colon [0, T] \times \Omega \to P_n(H)$, $n \in \N$, are stochastic processes with continuous sample paths, 
	Item~\eqref{item:exist:O:KS} in Lemma~\ref{lem:exist:O}, and Corollary~2.11 in \cite{cox2016convergence} (with $T=T$, $p=q$, $\beta= \varepsilon$, $\theta^N= \{ \frac{k T}{N} \in [0, \infty) \colon k \in \N_0 \cap [0, N] \}$, $(E, \left\| \cdot \right\|_E)= (H_{\varrho}, \left\| \cdot \right\|_{H_{\varrho}})$, $Y^N= ([0,T] \times \Omega  \ni (t, \omega) \mapsto {\mathcal{O}}^N_t(\omega) \in H_{\varrho})$, $Y^0= O$, $\alpha=0$, $\varepsilon= \nicefrac{\varepsilon}{2}$ for $N \in \N$ in the notation of Corollary~2.11 in \cite{cox2016convergence}) ensure that
	\begin{equation*}
	\sup\nolimits_{n \in \N} \left( n^{(\nicefrac{\varepsilon}{2}- \nicefrac{1}{q})} \left(\E\!\left[ \sup\nolimits_{t \in [0, T]} \|O_t - {\mathcal{O}}_t^n \|_{H_{\varrho}}^q \right]\right)^{\nicefrac1q} \right)< \infty.
	\end{equation*}
	Lemma~3.21 in  \cite{hutzenthaler2015numerical} (cf., e.g., Theorem~7.12 in \cite{graham2013stochastic} and Lemma~2.1 in \cite{kloeden2007pathwise}) together with the fact that $\nicefrac{\varepsilon}{2}- \nicefrac{1}{q} > \nicefrac{1}{q}$ hence yields that  
	\begin{equation}\label{eq:O:conv}
	\P \bigg(\limsup_{n \to \infty} \sup_{s \in [0, T]} \| O_s - {\mathcal{O}}_s^n \|_{H_{\varrho}} =0 \bigg)=1.
	\end{equation}
	Next observe that the fact that $\gamma -\rho>0$ and Item~\eqref{item:useful:1} in Lemma~\ref{lem:useful} imply that it holds for all $n \in \N$, $t \in [0, T]$ that 
	\begin{equation*}
	\begin{split}
	\|(\Id_H- P_n) \, e^{ t A }  \xi \|_{H_{\varrho}} &\leq \|(\kappa-A)^{\varrho-\gamma} (\Id_H-P_n) \|_{L(H)} \|\xi\|_{H_{\gamma}} \\
	& \leq (4 \pi^2 n^2)^{-(\gamma-\varrho)} \|\xi\|_{H_{\gamma}}.
	\end{split}
	\end{equation*}
	Combining this with \eqref{eq:O:conv} proves that 
	\begin{equation*}
	\P \bigg( \limsup_{n \to \infty} \sup_{s \in [0, T]} \big\| (O_s + e^{sA}  \xi) - ({\mathcal{O}}_s^n + P_n  e^{ s A }  \xi )\big\|_{H_{\varrho}} =0 \bigg)=1.
	\end{equation*}
	Fatou's Lemma implies that
	\begin{equation}\label{eq:O:as}
	\limsup_{n \to \infty} \E \! \left[ \min\bigg\{1, \sup_{s \in [0, T]} \big\| (O_s + e^{sA}  \xi) - ({\mathcal{O}}_s^n + P_n  e^{ s A }  \xi )\big\|_{H_{\varrho}}\bigg\} \right]=0.
	\end{equation}
	Moreover note that Items~\eqref{item:prop:O:2}--\eqref{item:prop:O:4} in Proposition~\ref{prop:prop:O} show that there exists $\eta \in [\kappa,\infty)$ and a sequence of stochastic processes $\mathbb{O}^n \colon [0,T] \times \Omega \to P_n(H)$ such that it holds for all $n\in \N$, $t\in [0,T]$ that 
	\begin{equation}
	\mathbb{O}^n_t  = {\mathcal{O}}_t^n + P_n e^{ t A } \xi - \int_0^t e^{(t-s)(A-\eta)} \, \eta (\mathcal{O}_s^n + P_n e^{ s A } \xi ) \, ds 
	\end{equation} 
	and 
	\begin{equation}\label{eq:strong:limsup}
	\begin{split}
	&
	\sup_{ m \in \N} \E\bigg[ \int_0^T \exp \left( \smallint_s^T q\,\phi\big( {\mathbb{O}}_{\lf u \rf_{h_m} }^m \big) \, du \right)  \max\Big\{ 1, \big\|{\mathbb{O}}_s^m \big\|_H^q, \big|\Phi({\mathbb{O}}_{ \lf s \rf_{h_m} }^m \big|^{q/2}, 
	\\
	& \quad  \smallint\nolimits_{0}^T \big\| {\mathcal{O}}_u^m+ P_m \, e^{u (A-\eta)} \xi \big\|_{H_{\varrho}}^{6 q} \, du \, \Big\} \, ds  \bigg]  
	+ \limsup_{ m \to \infty} \sup_{ s \in [0,T]} \E[ \| {\mathbb{O}}_s^m \|_H^q]  < \infty.  
	\end{split}
	\end{equation}
	Next observe that for all $v\in H_{\nicefrac12}$ it holds that
	\[
		\|v\|_{H_{\nicefrac12}} = \|(\kappa-A)^{\nicefrac12} v\|_H 
		\leq \|(\kappa-A)^{\nicefrac12} (\eta-A)^{-\nicefrac12}\|_{L(H)} \|(\eta-A)^{\nicefrac12} v\|_H. 
	\]
The fact that $\eta \geq \kappa$ yield 
	\begin{equation*}
	\begin{split}
		\|(\kappa-A)^{\nicefrac12} (\eta-A)^{-\nicefrac12}\|_{L(H)}^2 
		& = \sup_{w\in H \colon \|w\|_H=1} \sum_{h\in \mathbb{H}} \tfrac{\kappa+\lambda_h}{\eta+\lambda_h} \langle w,h \rangle_{H}^2 
		\\
		& \leq \sup_{w\in H \colon \|w\|_H=1} \|w\|_H^2=1.
	\end{split}
	\end{equation*}
	This and Lemma~\ref{coer:NS} (with $\varepsilon =\nicefrac14$) show that it holds for all $n\in \N$, $v,w\in P_n(H)$ that $F(v+w)\in H$, and 
\begin{equation}
	\begin{split}
	\left\langle   v, P_n F( v + w ) \right\rangle _H 
	& =\left\langle   v, F( v + w ) \right\rangle _H \\
	&\leq \phi( w ) \| v \|^2_H + \tfrac12 \| v \|^2_{ H_{\nicefrac12}} + \Phi( w )\\
	&\leq \phi( w ) \| v \|^2_H + \tfrac12 \| (\eta-A)^{\nicefrac{1}{2}} v \|^2_{ H} + \Phi( w ).
	\end{split}
	\end{equation}
	Moreover Lemma~\ref{lem:locLip} ensures for all $n\in \N$, $v,w\in P_n(H)$ that 
	\begin{equation} \label{last:eq}
	\|F(v) - F(w) \|_{H}  \leq   \theta \big( 1+ \left\| v \right\|_{ H_{\rho}} +  \left\| w \right\|_{ H_{\rho}} \big) \left\| v - w \right\|_{ H_{\rho}} <\infty.
	\end{equation}
	Furthermore note that the fact that $H\subseteq H_{-1}=\overline{ H}^{H_{-1}}$ and for all $n\in \N$ it holds that $P_n\in L(H)$ implies that for all $n\in \N$ there exists an extension $R_n \in L(H_{-1},H)$ ( i.e.~such that $R_n|_H = P_n$). The fact that  $H\subseteq H_{-1}$ ensures that for all $n\in \N$ it holds that $R_n\in L(H_{-1})$.
	In addition Item~\eqref{item:useful:4} in Lemma~\ref{lem:useful} ensures that $\liminf_{ m \to \infty } \inf( \{\lambda_h \colon h \in \mathbb{H} \backslash \mathbb{H}_m \} \cup \{\infty\}  ) = \infty$.
	Hence combining  this,  \eqref{eq:O:as}--\eqref{last:eq},
	%\eqref{eq:strong:limsup},  
	the fact that $p \in (0, q)$, the fact that $ \forall \, t \in [0, T] \colon \P(X_t = \int_0^t e^{(t-s)A} \, F(X_s) \, ds +O_t + e^{tA}  \xi)=1$, and Item~(iv) in Theorem~3.5 in \cite{jentzen2017strong} (with
	%$\mathbb{H}= \mathbb{H}$, 
	$\alpha=0$, $\varphi= \tfrac12$, 
	$p=q$, 
	%$\eta=\eta$, $\kappa= \kappa$, $F=F$, $\phi=\phi$, $\Phi=\Phi$, 
	%$\mathbb{H}_n = \mathbb{H}_n$, $h_n=h_n$,  
	$P_n=R_n$,
	  $\Y^n = ([0, T] \times \Omega \ni (\omega, t) \mapsto \Y_t^n(\omega) \in H_{\varrho})$,  
	$\mathbb{X}^n = ([0, T] \times \Omega \ni (\omega, t) \mapsto \Y_t^n(\omega) \in H_{\varrho})$,  
	$ 
	\mathcal{O}^n = ( [0,T] \times \Omega \ni (t,\omega) \mapsto ({\mathcal{O}}^n_t( \omega ) + P_n  e^{tA} \xi) \in P_n(H)) 
	$, 
	%$ \mathbb{O}^n = \mathbb{O}^n $,  
	% $X=X$, 
	$ O = ( [0,T] \times \Omega \ni (t,\omega) \mapsto (O_t( \omega ) + e^{tA}  \xi) \in H_{\varrho}) 
	$,  $q=p$ for $n \in \N$ in the notation of Item~(iii) in Theorem~3.5 in \cite{jentzen2017strong}) establishes Item~\eqref{item:SC:SC}.
	The proof of Theorem~\ref{abs:prop:last} is thus completed. 
\end{proof}

\section*{Acknowledgements}
The author would like to thank Arnulf Jentzen and Diyora Salimova for nice discussions. 
This project has been supported by the Deutsche Forschungsgesellschaft (DFG) via RTG
2131 \emph{High-dimensional Phenomena in Probability -- Fluctuations and Discontinuity}.

\addcontentsline{toc}{section}{Bibliography}
\bibliographystyle{plainnat}
\bibliography{bibfile}

\begin{thebibliography}{23}
\providecommand{\natexlab}[1]{#1}
\providecommand{\url}[1]{\texttt{#1}}
\expandafter\ifx\csname urlstyle\endcsname\relax
  \providecommand{\doi}[1]{doi: #1}\else
  \providecommand{\doi}{doi: \begingroup \urlstyle{rm}\Url}\fi

\bibitem[Becker and Jentzen(2018)]{becker2018strong}
Sebastian Becker and Arnulf Jentzen.
\newblock Strong convergence rates for nonlinearity-truncated {E}uler-type
  approximations of stochastic {G}inzburg--{L}andau equations.
\newblock \emph{Stochastic Processes and their Applications}, 2018.

\bibitem[Becker et~al.(2017)Becker, Gess, Jentzen, and
  Kloeden]{becker2017strong}
Sebastian Becker, Benjamin Gess, Arnulf Jentzen, and Peter~E Kloeden.
\newblock Strong convergence rates for explicit space-time discrete numerical
  approximations of stochastic {A}llen-{C}ahn equations.
\newblock \emph{arXiv preprint arXiv:1711.02423}, 2017.

\bibitem[Bessaih and Millet(2018)]{bessaih2018strong}
Hakima Bessaih and Annie Millet.
\newblock On strong ${L}^{2}$ convergence of numerical schemes for the
  stochastic 2{D} {N}avier-{S}tokes equations.
\newblock \emph{arXiv preprint arXiv:1801.03548}, 2018.

\bibitem[Bessaih et~al.(2014)Bessaih, Brze{\'z}niak, and
  Millet]{bessaih2014splitting}
Hakima Bessaih, Zdzislaw Brze{\'z}niak, and Annie Millet.
\newblock Splitting up method for the 2{D} stochastic {N}avier--{S}tokes
  equations.
\newblock \emph{Stochastic Partial Differential Equations: Analysis and
  Computations}, 2\penalty0 (4):\penalty0 433--470, 2014.

\bibitem[Carelli and Prohl(2012)]{carelli2012rates}
Erich Carelli and Andreas Prohl.
\newblock Rates of convergence for discretizations of the stochastic
  incompressible {N}avier--{S}tokes equations.
\newblock \emph{SIAM Journal on Numerical Analysis}, 50\penalty0 (5):\penalty0
  2467--2496, 2012.

\bibitem[Cox et~al.(2013)Cox, Hutzenthaler, and Jentzen]{cox2013local}
Sonja Cox, Martin Hutzenthaler, and Arnulf Jentzen.
\newblock Local {L}ipschitz continuity in the initial value and strong
  completeness for nonlinear stochastic differential equations.
\newblock \emph{arXiv preprint arXiv:1309.5595}, 2013.

\bibitem[Cox et~al.(2016)Cox, Hutzenthaler, Jentzen, van Neerven, and
  Welti]{cox2016convergence}
Sonja Cox, Martin Hutzenthaler, Arnulf Jentzen, Jan van Neerven, and Timo
  Welti.
\newblock Convergence in {H}{\"o}lder norms with applications to {M}onte
  {C}arlo methods in infinite dimensions.
\newblock \emph{arXiv preprint arXiv:1605.00856}, 2016.

\bibitem[Da~Prato and Zabczyk(2014)]{da2014stochastic}
Giuseppe Da~Prato and Jerzy Zabczyk.
\newblock \emph{Stochastic equations in infinite dimensions}.
\newblock Cambridge university press, 2014.

\bibitem[D{\"o}rsek(2012)]{dorsek2012semigroup}
Philipp D{\"o}rsek.
\newblock Semigroup splitting and cubature approximations for the stochastic
  {N}avier--{S}tokes equations.
\newblock \emph{SIAM Journal on Numerical Analysis}, 50\penalty0 (2):\penalty0
  729--746, 2012.

\bibitem[Graham and Talay(2013)]{graham2013stochastic}
Carl Graham and Denis Talay.
\newblock \emph{Stochastic simulation and {M}onte {C}arlo methods: mathematical
  foundations of stochastic simulation}, volume~68.
\newblock Springer Science \& Business Media, 2013.

\bibitem[Gy{\"o}ngy et~al.(2016)Gy{\"o}ngy, Sabanis, and
  {\v{S}}i{\v{s}}ka]{gyongy2016convergence}
Istv{\'a}n Gy{\"o}ngy, Sotirios Sabanis, and David {\v{S}}i{\v{s}}ka.
\newblock Convergence of tamed {E}uler schemes for a class of stochastic
  evolution equations.
\newblock \emph{Stochastics and Partial Differential Equations: Analysis and
  Computations}, 4\penalty0 (2):\penalty0 225--245, 2016.

\bibitem[Hausenblas and Randrianasolo(2018)]{hausenblas2018time}
Erika Hausenblas and Tsiry Randrianasolo.
\newblock Time-discretization of stochastic 2-{D} {N}avier--{S}tokes equations
  with a penalty-projection method.
\newblock \emph{arXiv preprint arXiv:1805.00832}, 2018.

\bibitem[Hutzenthaler and Jentzen(2015)]{hutzenthaler2015numerical}
Martin Hutzenthaler and Arnulf Jentzen.
\newblock \emph{Numerical approximations of stochastic differential equations
  with non-globally {L}ipschitz continuous coefficients}, volume 236.
\newblock American Mathematical Society, 2015.

\bibitem[Hutzenthaler et~al.(2010)Hutzenthaler, Jentzen, and
  Kloeden]{hutzenthaler2010strong}
Martin Hutzenthaler, Arnulf Jentzen, and Peter~E Kloeden.
\newblock Strong and weak divergence in finite time of {E}uler's method for
  stochastic differential equations with non-globally {L}ipschitz continuous
  coefficients.
\newblock In \emph{Proceedings of the Royal Society of London A: Mathematical,
  Physical and Engineering Sciences}. The Royal Society, 2010.

\bibitem[Hutzenthaler et~al.(2012)Hutzenthaler, Jentzen, Kloeden,
  et~al.]{hutzenthaler2012strong}
Martin Hutzenthaler, Arnulf Jentzen, Peter~E Kloeden, et~al.
\newblock Strong convergence of an explicit numerical method for {SDE}s with
  nonglobally {L}ipschitz continuous coefficients.
\newblock \emph{The Annals of Applied Probability}, 22\penalty0 (4):\penalty0
  1611--1641, 2012.

\bibitem[Hutzenthaler et~al.(2016)Hutzenthaler, Jentzen, and
  Salimova]{hutzenthaler2016strong}
Martin Hutzenthaler, Arnulf Jentzen, and Diyora Salimova.
\newblock Strong convergence of full-discrete nonlinearity-truncated
  accelerated exponential {E}uler-type approximations for stochastic
  {K}uramoto-{S}ivashinsky equations.
\newblock \emph{arXiv preprint arXiv:1604.02053}, 2016.

\bibitem[Jentzen and Pu{\v{s}}nik(2015)]{jentzen2015strong}
Arnulf Jentzen and Primo{\v{z}} Pu{\v{s}}nik.
\newblock Strong convergence rates for an explicit numerical approximation
  method for stochastic evolution equations with non-globally {L}ipschitz
  continuous nonlinearities.
\newblock \emph{arXiv preprint arXiv:1504.03523}, 2015.

\bibitem[Jentzen and Pu{\v{s}}nik(2016)]{jentzen2016exponential}
Arnulf Jentzen and Primo{\v{z}} Pu{\v{s}}nik.
\newblock Exponential moments for numerical approximations of stochastic
  partial differential equations.
\newblock \emph{Stochastics and Partial Differential Equations: Analysis and
  Computations}, pages 1--53, 2016.

\bibitem[Jentzen et~al.(2017)Jentzen, Salimova, and Welti]{jentzen2017strong}
Arnulf Jentzen, Diyora Salimova, and Timo Welti.
\newblock Strong convergence for explicit space-time discrete numerical
  approximation methods for stochastic {B}urgers equations.
\newblock \emph{arXiv preprint arXiv:1710.07123}, 2017.

\bibitem[Kloeden and Neuenkirch(2007)]{kloeden2007pathwise}
Peter~E Kloeden and Andreas Neuenkirch.
\newblock The pathwise convergence of approximation schemes for stochastic
  differential equations.
\newblock \emph{LMS journal of Computation and Mathematics}, 10:\penalty0
  235--253, 2007.

\bibitem[Renardy and Rogers(2006)]{renardy2006introduction}
Michael Renardy and Robert~C Rogers.
\newblock \emph{An introduction to partial differential equations}, volume~13.
\newblock Springer Science \& Business Media, 2006.

\bibitem[Runst and Sickel(1996)]{runst1996sobolev}
Thomas Runst and Winfried Sickel.
\newblock \emph{Sobolev spaces of fractional order, Nemytskij operators, and
  nonlinear partial differential equations}, volume~3.
\newblock Walter de Gruyter, 1996.

\bibitem[Sell and You(2013)]{sell2013dynamics}
George~R Sell and Yuncheng You.
\newblock \emph{Dynamics of evolutionary equations}, volume 143.
\newblock Springer Science \& Business Media, 2013.

\end{thebibliography}

\end{document}